\documentclass[11pt]{amsart}

\usepackage{amsmath,amssymb,amsthm}
\usepackage{array}
\usepackage{booktabs}
\usepackage{enumitem}
\usepackage[margin=1.08in]{geometry}
\usepackage[pdftex,colorlinks,citecolor=black,linkcolor=black,urlcolor=black,
bookmarks=true]{hyperref}
\usepackage{mathtools}
\usepackage{microtype}
\newtheorem{theorem}{Theorem}[section]
\newtheorem{proposition}[theorem]{Proposition}
\newtheorem{lemma}[theorem]{Lemma}
\newtheorem{corollary}[theorem]{Corollary}
\numberwithin{equation}{section}
\theoremstyle{definition}
\newtheorem{definition}[theorem]{Definition}
\theoremstyle{remark}
\newtheorem{remark}[theorem]{Remark}

\DeclareMathOperator{\tr}{tr}
\DeclareMathOperator{\Tr}{Tr}
\DeclareMathOperator{\Ent}{Ent}
\DeclareMathOperator{\End}{End}
\DeclareMathOperator{\Hom}{Hom}
\DeclareMathOperator{\Ind}{Ind}
\DeclareMathOperator{\Res}{Res}
\DeclareMathOperator{\Irr}{Irr}
\DeclareMathOperator{\rank}{rank}
\DeclareMathOperator{\ran}{ran}

\newcommand{\CC}{\mathbb C}
\newcommand{\RR}{\mathbb R}
\newcommand{\cE}{\mathcal E}
\newcommand{\cK}{\mathcal K}
\newcommand{\iu}{\mathrm i}

\title[The half-rate LP bound for binary codes]
{The half-rate linear programming bound for binary codes is
$\frac12-\frac1\pi$}
\author{Andrew Salmon}
\address{Independent Researcher}
\email{asalmon@alum.mit.edu}
\date{September 30, 2026}

\begin{document}

\begin{abstract}
In their work on sphere packing and the modular bootstrap,
Afkhami-Jeddi, Cohn, Hartman, de Laat, and Tajdini conjectured the exact
high-dimensional exponent of the Cohn--Elkies sphere-packing linear program.
OpenAI's Chapter~1 subsequently proved their conjecture by establishing that
both Fourier sign-uncertainty radii are
$(1/\pi+o(1))\sqrt d$.  We prove the binary coding analogue:
\[
 R_D\!\left(\frac12-\frac1\pi\right)=\frac12.
\]
We also formulate the two Krawtchouk sign-uncertainty problems and determine
both of their asymptotics.  If $A^{\mathrm K}_{\pm}(n)$ denotes the smallest
radius $r$ for which a nonzero Krawtchouk $(\pm1)$-eigenfunction $f$ exists with $f(0) = 0$ and
$f(x) \ge 0$ for all $|x| \ge r$, then
\[
 \frac{A^{\mathrm K}_{\pm}(n)}n\longrightarrow
 \frac12-\frac1\pi.
\]
The lower bound proves a mass-concentration principle similar to OpenAI's
Chapter~1 for Hamming space.

The upper bound, on the other hand, follows the approach of the spherical-code construction in OpenAI's Chapter~2.  Gay,
Jeronimo, and Liu formulated a hierarchy for binary codes analogous to
the spherical-code construction and improved the best known binary coding rate bounds by evaluating
the first level of the corresponding hierarchy.
We prove that this hierarchy bounds the Delsarte program
and give a construction at arbitrarily deep levels of the hierarchy,
attaining the upper bound in the limit.  The construction is an $N$-qubit generalization of the
pure-state channel of Alrabiah and Guruswami.
\end{abstract}

\maketitle
\begingroup
\small
\tableofcontents
\endgroup

\section{Introduction}

Let $K_j^{(n)}(i)$ denote the binary Krawtchouk polynomial, normalized by
\begin{equation}
 \sum_{j=0}^n K_j^{(n)}(i)z^j=(1+z)^{n-i}(1-z)^i.
 \label{eq:Krawtchouk-generating-function}
\end{equation}
For an integer distance $d$, the radial Delsarte linear program
\cite{Delsarte1973} is
\begin{equation}
 \operatorname{LP}_n(d)=
 \max\left\{\sum_{i=0}^n A_i:
 \begin{array}{l}
 A_0=1,\quad A_i\geq0,\quad A_i=0\quad(1\leq i<d),\\
 \displaystyle\sum_{i=0}^nA_iK_j^{(n)}(i)\geq0
       \quad(0\leq j\leq n)
 \end{array}\right\}.
 \label{eq:Delsarte-primal}
\end{equation}
Passing to the asymptotic rate, for $0<\delta<1/2$, define the quantity
\begin{equation}
 R_D(\delta)=\limsup_{n\to\infty}\frac1n
 \log_2\operatorname{LP}_n(\lceil\delta n\rceil),
 \label{eq:Delsarte-exponent}
\end{equation}
and for $0<R<1$, define the associated inverse distance bound by
\begin{equation}
 \delta_D(R)=\sup\left\{\delta\in\left[0,\frac12\right]:
 R_D(\delta)\geq R\right\}.
 \label{eq:inverse-Delsarte-distance}
\end{equation}
It is well-known that the Delsarte linear program is an upper bound on the size of a codebook for binary codes in Hamming space for this minimum distance.  That is, if we define the maximum size of the codebook
\begin{equation}
 A_2(n,d)=\max\left\{|C|:C\subseteq\{0,1\}^n,
 d_H(x,y)\geq d\ \text{for all distinct }x,y\in C\right\}
 \label{eq:binary-code-size}
\end{equation}
and define the asymptotic rate of binary codes by
\begin{equation}
 R_2(\delta)=\limsup_{n\to\infty}\frac1n
 \log_2 A_2(n,\lceil\delta n\rceil),
 \label{eq:binary-code-rate}
\end{equation}
the distance distribution of every code is feasible for the Delsarte program,
so $A_2(n,d)\leq\operatorname{LP}_n(d)$ and $R_2(\delta)\leq R_D(\delta)$.  Despite its status as a limit of linear programs, it is still challenging to give computations of the asymptotic rate $R_D(\delta)$.

Our main theorem gives the first exact determination of $R_D(\delta)$ at some point $\delta \in (0,1/2)$.  Our primary interest is the half-rate linear programming bound, denoted $\delta_D(1/2)$.  Our main theorem is as follows.
\begin{theorem}[half-rate linear programming bound]
\label{thm:main}
With
\begin{equation}
 \delta_\pi=\frac12-\frac1\pi,
 \label{eq:delta-pi}
\end{equation}
we have
\begin{equation}
 \boxed{R_D(\delta_\pi)=\frac12,}
 \label{eq:main}
\end{equation}
while $R_D(\delta)<\frac12$ for every $\delta>\delta_\pi$.
Equivalently,
\begin{equation}
 \delta_D\!\left(\frac12\right)=\delta_\pi
 =\frac12-\frac1\pi.
 \label{eq:main-inverse}
\end{equation}
\end{theorem}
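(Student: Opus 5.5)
The proof has two halves: a lower bound $R_D(\delta_\pi)\ge\frac12$, from primal constructions, and a strict upper bound $R_D(\delta)<\frac12$ for every $\delta>\delta_\pi$, from dual certificates. The abstract describes both. For the lower bound we must exhibit feasible points of \eqref{eq:Delsarte-primal} with $\sum A_i\ge 2^{n(1/2-o(1))}$ and $A_i=0$ for $1\le i<(\delta_\pi-\epsilon)n$. For the upper bound, by LP duality, we must exhibit for each $\delta>\delta_\pi$ a polynomial $f=\sum_j f_jK_j^{(n)}$ with $f_j\ge0$, $f_0>0$ and $f(i)\le0$ for $i\ge\delta n$, such that $f(0)/f_0\le 2^{n(1/2-\eta)}$ for some $\eta>0$. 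Throughout we identify the rate-$\frac12$ threshold with the Krawtchouk sign-uncertainty radii $A^{\mathrm K}_\pm(n)/n\to\delta_\pi$ announced in the abstract.

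\textbf{Upper bound on $R_D$ via sign uncertainty.} The Krawtchouk transform $\hat g(j)=2^{-n/2}\sum_i\binom ni g(i)K_j^{(n)}(i)$ is an involution on radial functions. If a dual certificate $f$ had ratio $f(0)/f_0\approx 2^{n/2}$, then after normalization $f$ and $\hat f$ would be of comparable size. Adding or subtracting them, and then subtracting a suitable constant multiple of the relevant point mass to enforce $g(0)=0$, would produce a $(\pm1)$-eigenfunction that is nonnegative beyond radius roughly $\delta n$, up to sign conventions. So the first step is a reduction lemma: if $R_D(\delta)\ge\frac12$ for some $\delta>\delta_\pi$, then $A^{\mathrm K}_+(n)$ or $A^{\mathrm K}_-(n)$ would be at most $(\delta_\pi+\epsilon')n$ with $\epsilon'<\delta-\delta_\pi$ along a subsequence, and we quantify the loss by $o(1)$ in the exponent. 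The second step is the core lower bound $\liminf A^{\mathrm K}_\pm(n)/n\ge\delta_\pi$, proved by mass concentration. An eigenfunction $g$ with $g(0)=0$ satisfies $\sum_i\binom ni g(i)=0$ and $\sum_i\binom ni g(i)K_j(i)=\pm2^{n/2}g(j)$. We would show that the binomially weighted mass of such $g$ must concentrate in the band $|i-n/2|\lesssim n/\pi$. The tool is the asymptotics of $K_j(i)$: oscillatory in the region $(\sqrt{j}-\sqrt{n-j})^2/n\ldots$, that is, the semicircle/arcsine law of the Krawtchouk kernel, whose edge at $j=n/2$ produces $1/\pi$ through the arcsine density averaged against the symmetric eigenfunction. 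If $g\ge0$ beyond $rn$ with $r<\delta_\pi$, pairing $g$ against the kernel at $j=0$ together with a positive-definite test function supported near $n/2$ yields a contradiction. I expect this step to be the main obstacle. The constant $1/\pi$ must emerge sharply from a Plancherel–Rotach-type estimate that holds uniformly in $n$, analogous to the $1/\pi$ from Bessel asymptotics in Chapter~1.

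\textbf{Lower bound on $R_D$ via the Gay--Jeronimo--Liu hierarchy.} First we prove that every level $N$ of the hierarchy yields feasible Delsarte points. The hierarchy value is built from a pure-state channel on $N$ qubits, and its induced distance distribution is a positive combination of Gram-type (positive semidefinite) data. Hence its Krawtchouk transforms are nonnegative, giving $R_D(\delta)\ge$ the level-$N$ value. Next, for each $N$ we construct the $N$-qubit generalization of the Alrabiah--Guruswami channel, that is, states $|\psi_x\rangle$ whose pairwise overlaps depend only on Hamming distance. We then compute the achieved pair (rate, $\delta$) by a saddle-point or large-deviation calculation. We show that as $N\to\infty$ the relative distance tends to $\frac12-\frac1\pi$ at rate $\frac12$. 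The limit should come from averaging $|\cos\theta|$ over a uniform phase, which gives $2/\pi$, since $\frac12-\frac12\cdot\frac2\pi=\delta_\pi$. Finally, monotonicity of $R_D$ together with a compactness and diagonal argument over $N$ gives $R_D(\delta_\pi)\ge\frac12$, and combined with the strict inequality above this yields \eqref{eq:main-inverse}.
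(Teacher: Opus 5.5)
\textbf{There is a genuine gap: you attach each of the two ingredients to the wrong direction of the inequality, so neither half of your argument closes.}

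\textbf{Sign uncertainty gives the lower bound, not the upper bound.} Mass concentration for Krawtchouk eigenvectors proves $R_D\ge\frac12$. Suppose a dual certificate $F$ in \eqref{eq:Delsarte-dual} had $F(0)<2^{n/2}$. Then $g=f-\cK_nf$ is a $(-1)$-eigenvector with $g_0=F(0)-2^{n/2}<0$ and $g_i\le0$ for $i\ge d$. It also satisfies $\sum_iw_ig_i=-2^{n/2}g_0>0$, so more than half of its weighted mass lies below $d$. Theorem~\ref{thm:mass} forbids this when $d=\lceil cn\rceil$ with $c<\delta_\pi$. Hence $\operatorname{LP}_n(d)\ge2^{n/2}$ and $R_D(c)\ge\frac12$. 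No normalization $g_0=0$ is needed here, since the sign of $g_0$ does the work. The paper proves concentration with a harmonic majorant for $\log|G_g|$ on the two half-disks plus Cauchy's estimate, not Plancherel--Rotach asymptotics.

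Your reduction lemma runs backwards. Lemma~\ref{lem:finite-sign-alternative} says that a \emph{small} LP value, $\operatorname{LP}_n(d)<2^{n/2}$, forces $A^{\mathrm K}_-(n)\le d$. A large LP value does not force the radius to be small. Even granting your lemma, its conclusion $A^{\mathrm K}_\pm(n)\le(\delta_\pi+\epsilon')n$ is fully consistent with $\liminf_n A^{\mathrm K}_\pm(n)/n\ge\delta_\pi$. So no contradiction, and hence no upper bound on $R_D$, can come from this step.

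\textbf{The hierarchy gives the upper bound, not primal points.} The Gay--Jeronimo--Liu/Alrabiah--Guruswami hierarchy produces \emph{dual} certificates, i.e.\ upper bounds on $R_D$. This is consistent with the first-level value $0.50035\ldots$ at $\delta_\pi$ lying above $\frac12$. In the paper, a Horn pair $(K,L)$ is realized, via Keyl--Werner concentration, by Perron eigenvectors $\rho_n\succeq0$ of $\Theta_n$. The generalized moving-subspace bound turns $\rho_n$ into the kernel $(t-\tau)\|K_{\rho_n}(x,y)\|_{\mathrm{HS}}^2$. This kernel is of positive type and nonpositive at every allowed distance, which is the \emph{dual} sign condition. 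It yields $R_D(\delta)\le\Phi(K,L)$ whenever $\Gamma(K,L)>1-2\delta$ (Theorem~\ref{thm:realization}). Positive type appears on both sides of the LP. Nothing in a channel construction supplies the primal gap condition $A_i=0$ for $1\le i<d$, so your claim that each level yields feasible Delsarte points fails.

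The correct upper-bound chain is:
\begin{enumerate}[label=\textup{(\arabic*)}]
 \item Theorem~\ref{thm:realization}.
 \item Gram symmetrization (Proposition~\ref{prop:duality}), giving a pair with $\Phi=\frac12$ and $\Gamma=J(K,L)$.
 \item The $N$-qubit pairs with $J_N=\frac1{2N}\cot\frac\pi{4N}\uparrow\frac2\pi$. This is an explicit Clifford--Chebyshev computation, not a saddle-point estimate.
 \item Dilution, giving $R_D(\delta)\le\frac\pi4(1-2\delta)<\frac12$ for $\delta>\delta_\pi$.
\end{enumerate}

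\textbf{The endpoint needs shortening, not monotonicity.} Once the ingredients are in the right places, monotonicity of $R_D$ points the wrong way at the endpoint in both directions. Knowing $R_D(c)\ge\frac12$ for $c<\delta_\pi$ does not give $R_D(\delta_\pi)\ge\frac12$. Knowing $R_D(\delta)<\frac12$ for $\delta>\delta_\pi$ does not give $R_D(\delta_\pi)\le\frac12$, which is the boxed equality and which your plan never addresses. The paper closes both with the shortening inequality \eqref{eq:shortening-rate}.
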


The half rate for binary codes is distinguished in a few ways.  For example, self-dual codes have rate $1/2$.  In the sphere packing context, a $(-1)$-eigenfunction uncertainty principle is closely related to the linear programming bound~\cite{CohnGoncalves2019}.  We define a Krawtchouk polynomial analogue of this uncertainty principle here.  For $\varsigma\in\{-1,+1\}$, let $A^{\mathrm K}_{\varsigma}(n)$ be the
smallest radial layer $r$ for which there is a nonzero $\varsigma$-eigenvector
of the unitary radial Krawtchouk transform that vanishes at layer zero and is
nonnegative on every layer $r,\ldots,n$.  Section~\ref{sec:half-disk} gives
the precise definition and proves the lower bound for both signs.

\begin{theorem}[Krawtchouk sign uncertainty]
\label{thm:Krawtchouk-sign-uncertainty}
For each $\varsigma\in\{-1,+1\}$,
\begin{equation}
 \lim_{n\to\infty}\frac{A^{\mathrm K}_{\varsigma}(n)}n
 =\frac12-\frac1\pi.
 \label{eq:Krawtchouk-sign-radius}
\end{equation}
\end{theorem}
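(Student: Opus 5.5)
Both inequalities $\liminf A^{\mathrm K}_\varsigma(n)/n\ge\delta_\pi$ and $\limsup A^{\mathrm K}_\varsigma(n)/n\le\delta_\pi$ have to be proved, and the two directions use entirely different tools.

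\emph{Lower bound.} I would use a mass-concentration principle in Hamming space. Let $f$ be a nonzero $\varsigma$-eigenvector of the unitary radial Krawtchouk transform $\cK$. Then $\cK$ is an involution, with $\cK f=\varsigma f$. Assume $f(0)=0$ and $f\ge0$ on layers $\ge r$. Vanishing at layer zero means $\sum_x \hat f(x)=0$ in the unnormalized Fourier picture. Since $\hat f=\varsigma f$, this gives $\sum_x f(x)=0$ over $\{0,1\}^n$. So the positive mass on the outer layers is cancelled by negative mass on layers $<r$. Because $f$ is an eigenvector, the same statement holds on the Fourier side.

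The key input is a Hamming-space version of the concentration inequality: a function supported near layers $[0,r]$ and its transform cannot both be concentrated there unless $r/n\gtrsim \delta_\pi$. I would derive it from the spectral behaviour of the Krawtchouk matrix restricted to low layers, i.e.\ the norm of $P_{\le r}\cK P_{\le r}$. Heuristically this uses the Plancherel--Rotach asymptotics of $K_j(i)$: the oscillatory region of $K_j$ is $|i-n/2|\le\sqrt{j(n-j)}$.

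Why $\frac12-\frac1\pi$ should appear: half-rate corresponds to eigenvectors living in an $n/2$-dimensional space. Layer profiles where $f$ has sign changes then concentrate, semicircle-style, with density $\frac{1}{\pi}\frac{1}{\sqrt{t(1-t)}}$ in $t=i/n$, symmetric about $1/2$. One should compare the quantity $\int_0^{\delta}$ of a density against $\delta$. Indeed $\frac12-\frac1\pi$ is the value $t$ at which $\frac12-t$ equals the mean of $|\cos|$-type averages, namely $\frac1\pi=\frac12\cdot\frac{2}{\pi}$.

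Concretely, I would write $f=\sum_j c_j K_j$ (after normalization). Testing the identity $\sum_x f(x)\,g(x)=0$ against a well-chosen positive weight $g$ (a Christoffel-type kernel of degree $\approx n/2$) forces the negative mass to spread to radius at least $(\delta_\pi-o(1))n$. This is the analogue of OpenAI's Chapter~1 argument, with Hermite functions replaced by Krawtchouk polynomials.

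\emph{Upper bound.} I would construct explicit eigenvectors. From Theorem~\ref{thm:main}, $R_D(\delta)\ge\frac12$ for $\delta$ slightly below $\delta_\pi$, and this comes with near-optimal dual/primal Delsarte certificates built from the $N$-qubit Alrabiah--Guruswami construction. Symmetrizing such a certificate $F$ as $F+\varsigma\cK F$ gives a $\varsigma$-eigenvector. Subtracting a multiple of a suitable eigenvector then kills the value at layer zero while keeping nonnegativity beyond $(\delta_\pi+\epsilon)n$. This step needs quantitative control, so that the symmetrization does not destroy the sign pattern; I would use that exponential-scale dominance at rate $1/2$ makes the transformed part negligible on outer layers. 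Equivalently, the construction at level $N$ of the Gay--Jeronimo--Liu hierarchy yields radii tending to $\delta_\pi$ as $N\to\infty$.

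\emph{Main obstacle.} The main obstacle is the lower bound, that is, turning the heuristic arcsine concentration into a rigorous inequality valid for \emph{all} eigenvectors, including those with exponentially small values. The first thing I would try is a second-moment/Christoffel--Darboux argument on the degree-$n/2$ Krawtchouk kernel, with sign information entering via positivity of the kernel's diagonal.
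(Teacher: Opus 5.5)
\textbf{There is a genuine gap in the lower bound.}

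Your sign step is correct. For $u\in\cE_n^\varsigma$ with $u_0=0$ one has $\sum_iw_iu_i=\varsigma2^{n/2}u_0=0$, so $u_i\ge0$ for $i\ge r$ forces layers $<r$ to carry half of $\sum_iw_i|u_i|$. What must then be proved is an \emph{$\ell^1$} statement: for $c<\delta_\pi$, every eigenvector has $\sum_{i<cn}w_i|g_i|$ exponentially small relative to $\sum_iw_i|g_i|$ (Theorem~\ref{thm:mass}).

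Your proposed key input, joint $L^2$ concentration controlled by $\|P_{\le r}\cK_nP_{\le r}\|$, is false in the needed range and cannot detect $\delta_\pi$. Take $u_i=(\sqrt2-1)^i$. It satisfies $\cK_nu=u$ exactly, since its generating polynomial $(1+(\sqrt2-1)z)^n$ is fixed by the functional equation. Its weighted $\ell^2$ mass $\binom ni(\sqrt2-1)^{2i}$ concentrates at $i/n\approx(2-\sqrt2)/4\approx0.146<\delta_\pi$. Hence $\|P_{\le r}\cK_nP_{\le r}\|$ is already exponentially close to $1$ for $r\approx0.15n$. By contrast, its weighted $\ell^1$ mass $\binom ni(\sqrt2-1)^i$ sits near $i/n\approx1-1/\sqrt2$.

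Because $w_i=\binom ni$ varies exponentially, no $L^2$ estimate transfers to the $\ell^1$ split you need. The arcsine and Christoffel heuristics only restate $\frac12-\frac1\pi$; they do not derive it.

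The missing idea is complex-analytic:
\begin{enumerate}[label=(\arabic*)]
 \item Normalize $\sum_iw_i|g_i|=1$, so that $P=G_g$ satisfies $|P|\le1$ on $|z|=1$.
 \item The functional equation, with $\omega(\iu y)$ on the unit circle, gives $|P(\iu y)|\le((1+y^2)/2)^{n/2}$ on the diameter.
 \item Hence $\log|P|\le n\mathcal B$, where $\mathcal B$ is harmonic on each half-disk with these boundary data. Its outward normal derivative along the circle is $D(a)=\frac12-\frac{2a\log a}{\pi(a^2-1)}\ge\delta_\pi$, with equality only at $z=\pm1$.
 \item Cauchy's estimate on $|z|=r<1$ then makes the coefficients $w_ig_i$, $i<cn$, exponentially small.
\end{enumerate}

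\textbf{The upper bound also needs repair.}

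It is fed by the strict bound $R_D(\delta)<\frac12$ for $\delta>\delta_\pi$ (Theorem~\ref{thm:upper}), not by $R_D\ge\frac12$ below $\delta_\pi$. That bound gives $\operatorname{LP}_n(d_n)<2^{n/2}$ for large $n$, and LP duality then supplies a dual polynomial $F$ with $F(0)<2^{n/2}$.

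For $\varsigma=-1$ your symmetrization works without any quantitative control. The vector $u=\cK_nf-f\in\cE_n^-$ is $\ge0$ on layers $\ge d_n$, because there $\cK_nf\ge0\ge f$, and it has $u_0>0$. Subtracting $u_0/(1-2^{-n/2})$ times $\delta_0-\cK_n\delta_0=\delta_0-2^{-n/2}\mathbf1$ kills layer zero and only increases the tail. This is a constructive substitute for the Farkas alternative in Lemma~\ref{lem:finite-sign-alternative}.

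For $\varsigma=+1$ the symmetrization fails. On the tail $f\le0\le\cK_nf$, so your route needs $|f_i|\ge(\cK_nf)_i$ on the whole tail. You do not prove this, and it fails at any tail layer where $F$ vanishes while $b_i>0$. The paper instead proves $A^{\mathrm K}_+(n)\le A^{\mathrm K}_-(n)$ (Proposition~\ref{prop:Krawtchouk-sign-comparison}). It factors $\mathcal F_u=MQ$ with $M=(\sqrt2-1)X-Y$ and replaces $M$ by $-(X+(\sqrt2-1)Y)$.
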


The inspiration for the above theorem comes from the Euclidean analogue,
which was recently resolved.
Let $\operatorname{LP}^{\rm CE}_d$ be the Cohn--Elkies bound on the density
of sphere packing in $\RR^d$, and let $A_{\varsigma}(d)$ be the last-sign change radius
for Fourier $\varsigma$-eigenfunctions, with $\varsigma\in\{-1,+1\}$
\cite{GoncalvesOliveiraeSilvaSteinerberger2017,CohnGoncalves2019}.
Afkhami-Jeddi, Cohn, Hartman, de Laat, and Tajdini conjectured the following limits
in their study of sphere packing and the modular bootstrap
\cite[Conjecture~3.2]{AfkhamiJeddiEtAl2020}:
\begin{equation}
 \lim_{d\to\infty}
 \bigl(\operatorname{LP}^{\rm CE}_d\bigr)^{1/d}
 =\sqrt{\frac{e}{2\pi}},
 \qquad
 \lim_{d\to\infty}\frac{A_-(d)}{\sqrt d}=\frac1\pi.
 \label{eq:Euclidean-Chapter-One-theorem}
\end{equation}
OpenAI's Chapter~1 proved these limits and also proved the same asymptotic rate
for $A_+(d)$ in Theorem~1.2~\cite[Chapter~1]{OpenAI2026}.

We give both a high-dimensional $(+1)$ and $(-1)$-eigenfunction Krawtchouk uncertainty theorem.
A conic duality type statement relates $A^{\mathrm K}_-(n)$ with the Delsarte bound at
rate $1/2$, and similar to the Euclidean case~\cite[Chapter~1, Appendix~A]{OpenAI2026}, we have $A^{\mathrm K}_+(n) \le A^{\mathrm K}_-(n)$ for all $n$.  Thus, despite the fact that Hamming space has no analogue of the Euclidean rescaling that directly
connects a Cohn--Elkies auxiliary function to a Fourier $(-1)$-eigenfunction, we arrive at analogues of the Euclidean results
because feasible solutions to these problems at the rate $1/2$ behave like the weight enumerators of self-dual codes.

The previous lower bounds were the Gilbert--Varshamov bound, later improved
by Samorodnitsky's bound.  We write
\begin{equation}
 \begin{aligned}
 h_2(x)&=-x\log_2x-(1-x)\log_2(1-x),
 &R_{\rm GV}(\delta)&=1-h_2(\delta),\\
 M_1(\delta)&=h_2\!\left(\frac12-\sqrt{\delta(1-\delta)}\right).
 \end{aligned}
 \label{eq:prior-lower-notation}
\end{equation}
The Gilbert--Varshamov construction gives
$R_D(\delta)\geq R_2(\delta)\geq R_{\rm GV}(\delta)$
\cite{Gilbert1952,Varshamov1957}.  Samorodnitsky's lower bound on the
Delsarte optimum strengthens this bound to the arithmetic
mean of the Gilbert--Varshamov and first
McEliece--Rodemich--Rumsey--Welch (MRRW) exponents,
\begin{equation}
 L_{\rm Sam}(\delta)
 =\frac12\bigl(R_{\rm GV}(\delta)+M_1(\delta)\bigr),
 \label{eq:prior-lower-bounds}
\end{equation}
and it gives $R_D(\delta)\geq L_{\rm Sam}(\delta)$
\cite{Samorodnitsky2001}.  At $1/2-1/\pi$,
\begin{equation}
 R_{\rm GV}(\delta_\pi)=0.3162395419\ldots,
 \qquad L_{\rm Sam}(\delta_\pi)=0.4146594612\ldots.
 \label{eq:prior-lower-values}
\end{equation}

For the upper bound, the most recent previous bound is due to Gay, Jeronimo, and Liu.  The
history of asymptotic bounds at $\delta_\pi$ is shown below, with decimals
indicating numerical computations.
\begin{center}
\small
\renewcommand{\arraystretch}{1.12}
\begin{tabular}{@{}ll@{}}
\toprule
dual construction & evaluated upper exponent at $\delta_\pi$ \\
\midrule
first MRRW & $0.5130793807\ldots$ \\
second MRRW & $0.5017797958\ldots$ \\
OpenAI Chapter~2 combined moving-subspace bound & $0.5005547387\ldots$ \\
Gay--Jeronimo--Liu first Honeycomb level & $0.5003477743\ldots$ \\
this paper & $1/2$ exactly \\
\bottomrule
\end{tabular}
\end{center}
The MRRW entries are the classical first and second bounds \cite{MRRW1977}.
OpenAI Chapter~2 then improved the second MRRW bound by
combining moving subspace constructions for Hamming space and the Johnson scheme
\cite[Chapter~2]{OpenAI2026}.  At the point $\delta_\pi$, the bound of Gay, Jeronimo, and Liu's first
Honeycomb exponent improves numerically on the OpenAI exponent
\cite[Theorem~1.2 and Appendix~C.1]{GayJeronimoLiu2026}.
Gay--Jeronimo--Liu also identify this exponent with
a $2\times2$ Horn-channel exponent
\cite[Section~8]{GayJeronimoLiu2026}.

To prove the lower bound, we will use a mass concentration inequality
to give a lower bound for the $(-1)$-eigenfunction
uncertainty principle.  The upper bound uses the strategy of Chapter~2, where
in the Euclidean case, all levels of the hierarchy of linear programming bounds
given by the moving subspace bound must be used at once.

There are new difficulties that emerge in the binary code case that are not present
in the Euclidean case, and vice versa.  The lower bound has fewer analytic
difficulties in the binary coding setting.  Chapter~1 obtains its mass concentration statement through
Mellin-strip interpolation, gamma-factor asymptotics, and shifted Mellin
inversion.  Here the Krawtchouk generating polynomial is controlled by a
single Dirichlet problem on a half-disk in the complex plane.
Solving this harmonic boundary-value problem gives precise control
over the values of the polynomial, and to get control over the coefficients,
we may use Cauchy's formula.

On the other hand, the upper bound involves nontrivial multiplicity spaces
which are not present in the spherical coding case.  To handle these, we generalize
the finite moving-subspace bound to higher multiplicity spaces, where the data
needed to construct a bound is guaranteed by the Perron-Frobenius theorem.
Additionally, we use results of Keyl and Werner to pass to a semiclassical
limit.  This limit was called the Horn-channel hierarchy in~\cite{GayJeronimoLiu2026},
who arrived at it through consideration of the classical-quantum channels
from~\cite{AlrabiahGuruswami2026}.

To help orient the reader, we give the following ``proof sketch.''  For positive
semidefinite matrices $K,L$ with $\tr(K+L)=1$, define
\begin{align}
 \Gamma(K,L)&=2\tr(K^{1/2}L^{1/2}),
 \label{eq:Gamma}\\
 \Phi(K,L)&=\Ent(K)+\Ent(L)-\Ent(K+L),
 \label{eq:Phi}\\
 J(K,L)&=\tr(K^{1/2}L^{1/2})
 +\frac12\tr\!\left(G^{-1/2}QG^{-1/2}Q\right),
 \label{eq:J}
\end{align}
where $G=K+L$, $Q=K-L$,
$\Ent(A)=-\tr(A\log_2A)$, and inverses are taken on $\operatorname{supp}G$.
\begin{equation}
\begin{gathered}
 \bigl(\liminf_{n\to\infty}\tfrac{A^{\mathrm K}_{\pm}(n)}n\ge
 \tfrac12-\tfrac1\pi\bigr)
 \Longrightarrow R_D(\delta_\pi)\geq\tfrac12
 \Longrightarrow
 \bigl(\Gamma>\tfrac2\pi\Longrightarrow\Phi>\tfrac12\bigr)
 \Longrightarrow J\leq\tfrac2\pi,\\[2mm]
 \left(
 J_N=\frac1{2N}\cot\frac{\pi}{4N}
 \right)
 \Longrightarrow R_D(\delta_\pi)\leq\tfrac12.
\end{gathered}
\label{eq:proof-circle}
\end{equation}
The $N$-qubit projection construction in Section~\ref{subsec:N-qubit} gives a construction at level $2^{N+1}$ of the hierarchy, giving the value $J_N$ here.

\section*{Use of generative artificial intelligence}

OpenAI's GPT-5.6 Sol was used to understand Chapters~1 and~2 of
\cite{OpenAI2026}, the work of Alrabiah and Guruswami
\cite{AlrabiahGuruswami2026}, and the work of Gay, Jeronimo, and Liu
\cite{GayJeronimoLiu2026}; to identify the Keyl--Werner spectrum-estimation
reference \cite{KeylWerner2001}; to draft this document; to supply proofs
for Theorem~\ref{thm:mass}; and especially to suggest the $N$-qubit
projection construction and computation of $J$.

The connection between the $N$-qubit projection construction and the pretty good measurement
was found only after the initial proof had been written.  The
initial proof arose instead when Sol was tasked with the goal of supplying a proof of the optimization problem involving $\Gamma$ and $\Phi$ using a
connection to integrable systems and the Yang--Baxter equation.
Several conceptual refinements related to this construction, including Proposition~\ref{prop:reflection-pair} and Lemma~\ref{lem:gaussian-sandwich} arose from discussions with Fable 5.1.
The author has checked the resulting arguments and takes responsibility for all
statements and proofs in the article.

\section{A Hecke-algebra generalization of the finite moving-subspace bound}
\label{sec:Hecke-generalization}

One classical object in coding theory in connection with linear programming bounds is the association scheme.  However, the moving-subspace bound does not naturally fit in the language of association schemes since if our space is $\mathsf G/\mathsf H$, the bound uses a nontrivial representation $E$ of $\mathsf H$.  Thus, we will phrase our results in the language of Hecke algebras throughout this section.  In the case that $(\mathsf G, \mathsf H)$ is a Gelfand pair, then the Hecke algebra for the trivial representation is the Bose-Mesner algebra of the corresponding association scheme.  We will see below that the positive definiteness construction used by the moving-subspace bound falls out naturally from a bilinear trace contraction that we will introduce in Definition~\ref{def:bilinear-trace-contraction}.

Throughout this section, $\mathsf G$ is a finite group,
$\mathsf H\leq\mathsf G$, and $E$ is a finite-dimensional unitary
$\mathsf H$-module.  We allow the spaces
\begin{equation}
 M_\pi=\Hom_{\mathsf H}(E,U_\pi|_{\mathsf H})
 \label{eq:standing-Hecke-multiplicity-spaces}
\end{equation}
to have arbitrary dimension.  Previous versions of the finite moving-subspace theorem generalized
below were given in the multiplicity-one setting in OpenAI's Chapter~2 and
Gay--Jeronimo--Liu
\cite[Chapter~2, Theorem~4.2]{OpenAI2026}
\cite[Theorem~2.1]{GayJeronimoLiu2026}.

\subsection{Hecke algebra preliminaries}

Put $X=\mathsf G/\mathsf H$ with basepoint $o=\mathsf H$.  All Hilbert
spaces below are finite-dimensional, and $X$ carries the uniform probability
measure.

\begin{definition}[homogeneous bundle and induced representation]
\label{def:homogeneous-bundle}
The homogeneous bundle associated with $E$ is
\begin{equation}
 \cE_E=\mathsf G\times_{\mathsf H}E\longrightarrow X
 \label{eq:associated-bundle}
\end{equation}
with $(gh,e)\sim(g,he)$.  Its fiber over $x=g\mathsf H$ is denoted $E_x$.
The Hilbert space of sections is the induced representation
\begin{equation}
 \mathcal I_E=L^2(X,\cE_E)=\Ind_{\mathsf H}^{\mathsf G}E.
 \label{eq:induced-module}
\end{equation}
Equivalently, $\mathcal I_E$ consists of functions $f:\mathsf G\to E$
satisfying $f(gh)=h^{-1}f(g)$, with inner product
\begin{equation}
 \langle f_1,f_2\rangle
 =\frac1{|X|}\sum_{g\mathsf H\in X}
   \langle f_1(g),f_2(g)\rangle_E.
 \label{eq:induced-inner-product}
\end{equation}
The group $\mathsf G$ acts by left translation.
\end{definition}

\begin{definition}[generalized Hecke algebra]
\label{def:generalized-Hecke-algebra}
The generalized Hecke algebra of $(\mathsf G,\mathsf H,E)$ is the
commutant
\begin{equation}
 \mathcal H_E=\End_{\mathsf G}(\mathcal I_E)
 =\{A\in\End(\mathcal I_E):Ag=gA\text{ for every }g\in\mathsf G\}.
 \label{eq:generalized-Hecke-algebra}
\end{equation}
\end{definition}

If $U_\pi$ runs over the irreducible $\mathsf G$-modules and
\begin{equation}
 M_\pi=\Hom_{\mathsf H}(E,U_\pi|_{\mathsf H}),
 \label{eq:Hecke-multiplicity-space}
\end{equation}
then Frobenius reciprocity gives the two complementary descriptions
\begin{equation}
 \mathcal I_E\cong\bigoplus_{\pi\in\Irr(\mathsf G)}
 U_\pi\otimes M_\pi^*,
 \qquad
 \mathcal H_E\cong\bigoplus_{\pi\in\Irr(\mathsf G)}\End(M_\pi).
 \label{eq:Hecke-Wedderburn}
\end{equation}
The number $\dim M_\pi$ is the multiplicity of $U_\pi$ in
$\mathcal I_E$.  On this isotypic summand, equivariance forces an operator
to be the identity on $U_\pi$ but permits an arbitrary endomorphism
$A_\pi$ of the multiplicity factor.

We will need to distinguish two relevant traces.  We use $\tr$ for the trace
in each endomorphism space $\End(M_\pi)$ and $\Tr$ for the trace of the
action on the induced representation $\mathcal I_E$.  For
$A=(A_\pi)_\pi\in\mathcal H_E$, the isotypic decomposition relates these
traces, and likewise the ranks, by
\begin{equation}
 \Tr(A)=\sum_\pi(\dim U_\pi)\tr(A_\pi),\qquad
 \rank A=\sum_\pi(\dim U_\pi)\rank A_\pi.
 \label{eq:Hecke-trace-rank}
\end{equation}

\begin{definition}[$\Omega$-isotypic summand]
\label{def:isotypic-Hecke-summand}
For $\Omega\subset\Irr(\mathsf G)$ let $z_\Omega$ be the central projection
onto the corresponding isotypic summands and put
\begin{equation}
 \mathcal I_{E,\Omega}=z_\Omega\mathcal I_E,
 \qquad
 \mathcal H_{E,\Omega}
 =\End_{\mathsf G}(\mathcal I_{E,\Omega})
 \cong z_\Omega\mathcal H_Ez_\Omega
 =\bigoplus_{\pi\in\Omega}\End(M_\pi).
 \label{eq:isotypic-Hecke-summand}
\end{equation}
Because $z_\Omega$ is central, $\mathcal H_{E,\Omega}$ is a central direct
summand of $\mathcal H_E$, with unit $z_\Omega$.
\end{definition}

\begin{definition}[bundle kernel and positive type]
\label{def:bundle-positive-type}
Every $A\in\mathcal H_E$ has a unique invariant bundle kernel
$K_A(x,y):E_y\to E_x$, normalized by
\begin{equation}
 (Af)(x)=\frac1{|X|}\sum_{y\in X}K_A(x,y)f(y).
 \label{eq:bundle-kernel-action}
\end{equation}
It satisfies
\begin{equation}
 K_A(gx,gy)=gK_A(x,y)g^{-1},\qquad
 K_{A^*}(x,y)=K_A(y,x)^*.
 \label{eq:bundle-kernel-covariance}
\end{equation}
A Hermitian bundle kernel $K$ is of \emph{positive type} if
\begin{equation}
 \sum_{a,b=1}^s\langle v_a,K(x_a,x_b)v_b\rangle\geq0
 \label{eq:bundle-positive-type}
\end{equation}
for every finite collection $x_1,\ldots,x_s\in X$ and
$v_a\in E_{x_a}$.
\end{definition}

Positivity of an operator $A$ is equivalent to the positive type condition on $K_A$
and to positivity of every block $A_\pi\in\End(M_\pi)$.  Moreover,
\begin{equation}
 \Tr(A)=\frac1{|X|}\sum_{x\in X}\tr K_A(x,x).
 \label{eq:kernel-operator-trace}
\end{equation}
We write
\begin{equation}
 \mathcal H_E^+=\{A\in\mathcal H_E:A\succeq0\}
 \label{eq:positive-Hecke-cone}
\end{equation}
for the positive semidefinite elements in the Hecke algebra.
In the Wedderburn decomposition \eqref{eq:Hecke-Wedderburn}, $\mathcal H_E^+$
is the product of the positive semidefinite cones in the blocks $\End(M_\pi)$.
For the trivial $\mathsf H$-module $\mathbf1$, write $\mathcal H_1=\mathcal H_{\mathbf1}$;
this is the algebra of invariant scalar kernels on $X$.

\begin{definition}[positive-type coordinate and threshold graph]
\label{def:threshold-graph}
Choose a unitary $\mathsf G$-module $V$ with a unit
$\mathsf H$-fixed vector $\ell_o$.  For $x=g\mathsf H$, put
$\ell_x=g\ell_o$ and assume that the coordinate
\begin{equation}
 t(x,y)=\langle\ell_x,\ell_y\rangle
 \label{eq:positive-type-coordinate}
\end{equation}
is real.  Then $t$ is $\mathsf G$-invariant, of positive type, and normalized
by $t(x,x)=1$.  For $\tau\in\RR$, let $\mathfrak G_t(\tau)$ be the graph on
$X$ in which distinct $x,y$ are adjacent when $t(x,y)>\tau$.  Thus points
allowed to occur together in an independent set satisfy
\begin{equation}
 t(x,y)\leq\tau.
 \label{eq:allowed-threshold-pairs}
\end{equation}
\end{definition}

\begin{definition}
\label{def:multiplication-map}
For $A\in\mathcal H_E$, define the multiplication-by-$t$ map
$\mathsf S_t:\mathcal H_E\to\mathcal H_E$ by
\begin{equation}
 K_{\mathsf S_t(A)}(x,y)=t(x,y)K_A(x,y).
 \label{eq:Hecke-Schur-kernel}
\end{equation}
The right side is again an invariant bundle kernel.  On the
$\Omega$-isotypic summand, the induced map is
\begin{equation}
 \Theta_\Omega(A)=z_\Omega\mathsf S_t(A)z_\Omega.
 \label{eq:isotypic-multiplication-map}
\end{equation}
\end{definition}

\begin{definition}
Let $\mathcal A$ and $\mathcal B$ be finite-dimensional unital
$*$-algebras.  A linear map $\Psi:\mathcal A\to\mathcal B$ is
\emph{positive} if $\Psi(A)\succeq0$ whenever $A\succeq0$.  It is
\emph{completely positive} if
\begin{equation}
 \operatorname{id}_{M_s}\otimes\Psi:
 M_s(\mathcal A)\longrightarrow M_s(\mathcal B)
 \label{eq:complete-positivity}
\end{equation}
is positive for every $s\geq1$.
\end{definition}

The fixed vector $\ell_o$ defines an $\mathsf H$-intertwining isometry
\begin{equation}
 j_{\ell,o}:E\longrightarrow(\Res_{\mathsf H}^{\mathsf G}V)\otimes E,
 \qquad e\longmapsto\ell_o\otimes e,
 \label{eq:basepoint-coordinate-lift}
\end{equation}
where $\mathsf H$ acts diagonally on the target.  Inducing this map and using
the tensor identity
\begin{equation}
 \Ind_{\mathsf H}^{\mathsf G}
 \bigl((\Res_{\mathsf H}^{\mathsf G}V)\otimes E\bigr)
 \cong V\otimes\Ind_{\mathsf H}^{\mathsf G}E
 \label{eq:induced-tensor-identity}
\end{equation}
gives the $\mathsf G$-intertwiner
\begin{equation}
 J_\ell=\Ind_{\mathsf H}^{\mathsf G}(j_{\ell,o}):
 \mathcal I_E\longrightarrow V\otimes\mathcal I_E,
 \qquad (J_\ell f)(x)=\ell_x\otimes f(x),
 \label{eq:coordinate-lift}
\end{equation}
with the diagonal action on the target.  The map $J_\ell$ is an isometry with
respect to the natural Hilbert space structure.

\begin{proposition}[Stinespring form]
\label{prop:multiplication-Stinespring}
For every $A\in\mathcal H_E$,
\begin{equation}
 \mathsf S_t(A)=J_\ell^*(I_V\otimes A)J_\ell.
 \label{eq:multiplication-Stinespring}
\end{equation}
Consequently, $\mathsf S_t$ is unital and completely positive.
\end{proposition}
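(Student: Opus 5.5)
The plan is to verify the identity at the level of bundle kernels; complete positivity and unitality will then follow from the Stinespring form. First we need the kernel of $I_V\otimes A$ acting on $V\otimes\mathcal I_E$. Under the tensor identity \eqref{eq:induced-tensor-identity}, the space $V\otimes\mathcal I_E$ is the space of sections of the bundle with fibers $V\otimes E_x$. The operator $I_V\otimes A$ acts by
\begin{equation*}
 \bigl((I_V\otimes A)F\bigr)(x)=\frac1{|X|}\sum_{y\in X}(I_V\otimes K_A(x,y))F(y).
\end{equation*}
This holds because it holds on elementary tensors $v\otimes f$, where $F(y)=v\otimes f(y)$, and both sides are linear.

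Next compute $J_\ell^*$. Take $F\in V\otimes\mathcal I_E$ and $f\in\mathcal I_E$. By \eqref{eq:induced-inner-product},
\begin{equation*}
 \langle J_\ell f,F\rangle=\frac1{|X|}\sum_x\langle \ell_x\otimes f(x),F(x)\rangle,
\end{equation*}
so
\begin{equation*}
 (J_\ell^*F)(x)=(\ell_x^*\otimes I_{E_x})F(x),
\end{equation*}
where $\ell_x^*\otimes I$ denotes the partial inner product against $\ell_x$. Composing the three maps gives
\begin{equation*}
 (J_\ell^*(I_V\otimes A)J_\ell f)(x)=\frac1{|X|}\sum_y(\ell_x^*\otimes I)\bigl(\ell_y\otimes K_A(x,y)f(y)\bigr)=\frac1{|X|}\sum_y\langle\ell_x,\ell_y\rangle K_A(x,y)f(y).
\end{equation*}
Since $t(x,y)=\langle\ell_x,\ell_y\rangle$, this is exactly the action of the kernel $t(x,y)K_A(x,y)$ from \eqref{eq:Hecke-Schur-kernel}.

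The right side $J_\ell^*(I_V\otimes A)J_\ell$ commutes with $\mathsf G$, because $J_\ell$ is a $\mathsf G$-intertwiner and $I_V\otimes A$ commutes with the diagonal action. So it lies in $\mathcal H_E$. By uniqueness of bundle kernels, it equals $\mathsf S_t(A)$. This also confirms the earlier assertion that the multiplied kernel is again an invariant kernel.

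For the consequences:
\begin{itemize}
\item \emph{Unitality.} Take $A=I$. Then $J_\ell^*J_\ell=I$ because $J_\ell$ is an isometry, since $\|\ell_x\|=1$. Equivalently, $t(x,x)=1$.
\item \emph{Complete positivity.} Apply $\operatorname{id}_{M_s}\otimes$ to the formula:
\begin{equation*}
 (\operatorname{id}\otimes\mathsf S_t)(B)=(I_s\otimes J_\ell)^*(I_s\otimes I_V\otimes\cdot)(B)(I_s\otimes J_\ell).
\end{equation*}
Tensoring with $I_V$ preserves positivity, and conjugation by any operator $W$ preserves positivity, since $W^*BW\succeq0$. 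Positivity in $M_s(\mathcal H_E)$ is positivity as an operator on $\CC^s\otimes\mathcal I_E$.
\end{itemize}

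The only real subtlety is bookkeeping in the identification \eqref{eq:induced-tensor-identity}. One must check that the kernel of $I_V\otimes A$ is $I_V\otimes K_A(x,y)$, with the fiber identification $(V\otimes E)_x\cong V\otimes E_x$ given by $g(v\otimes e)\mapsto gv\otimes ge$. I would handle this by working in the function model on $\mathsf G$. There, sections of the tensored bundle are functions $F:\mathsf G\to V\otimes E$ with $F(gh)=(h^{-1}\otimes h^{-1})F(g)$. The map $v\otimes f\mapsto(g\mapsto g^{-1}v\otimes f(g))$ realizes the isomorphism, and everything reduces to pointwise linear algebra. I expect no genuine obstacle here.
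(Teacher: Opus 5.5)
Your proposal is correct and follows essentially the same route as the paper. The paper identifies the $(x,y)$ kernel entry of $J_\ell^*(I_V\otimes A)J_\ell$ as $\langle\ell_x,\ell_y\rangle K_A(x,y)=t(x,y)K_A(x,y)$, and reads off unitality and complete positivity from the isometric compression of the unital $*$-representation $A\mapsto I_V\otimes A$. You do the same, and additionally write out the adjoint of $J_\ell$ and the fiber identification that the paper leaves implicit.
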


\begin{proof}
The $(x,y)$ kernel entry of the right side of
Equation~\eqref{eq:multiplication-Stinespring} is
\[
 \langle\ell_x,\ell_y\rangle K_A(x,y)=t(x,y)K_A(x,y),
\]
which is Equation~\eqref{eq:Hecke-Schur-kernel}.  The map
$A\mapsto I_V\otimes A$ is a unital $*$-representation, and $J_\ell$ is an
isometry.  The displayed compression is therefore unital and completely
positive.
\end{proof}

The map $\mathsf S_t$ is completely positive and unital by the above proposition, and additionally is self-adjoint by the kernel identity
$\Tr(\mathsf S_t(A)B)=\Tr(A\mathsf S_t(B))$.  As a result, $\Theta_\Omega$ is completely positive, subunital, and self-adjoint for
the ordinary operator trace.

\subsection{The generalized moving-subspace bound}

We are now ready to give the moving-subspace construction
valid for any choice of $(\mathsf G, \mathsf H, E, \Omega)$.
The candidates for the eigenvalue $\Lambda$ can still come from the
Perron-Frobenius theorem, generalized to positive maps.

The bilinear trace contraction given below allows us to transport
our constructions from $\mathcal{H}_E^+$ to a kernel on
$\mathsf{G} / \mathsf{H}$.

\begin{definition}[bilinear trace contraction]
\label{def:bilinear-trace-contraction}
For two bundle kernels $K_C,K_D$, define
\begin{equation}
 \mathcal T_E(C,D)(x,y)
 =\tr\bigl(K_C(x,y)K_D(y,x)\bigr).
 \label{eq:bilinear-trace-contraction}
\end{equation}
Diagrammatically,
\[
 \mathcal T_E(C,D)(x,y)
 =\tr\Bigl(E_x\xrightarrow{K_D(y,x)}E_y\xrightarrow{K_C(x,y)}E_x\Bigr).
\]
\end{definition}

The contraction preserves these positive cones:
\begin{equation}
 \mathcal T_E:\mathcal H_E^+\times\mathcal H_E^+
 \longrightarrow\mathcal H_1^+.
 \label{eq:trace-contraction-positive-cones}
\end{equation}
Indeed, on any finite collection of points, write
$K_C(x_i,x_j)=R_i^*R_j$ and $K_D(x_i,x_j)=S_i^*S_j$.  Then
\begin{equation}
 \mathcal T_E(C,D)(x_i,x_j)
 =\langle R_iS_i^*,R_jS_j^*\rangle_{\mathrm{HS}},
 \label{eq:trace-contraction-Gram}
\end{equation}
so the contracted scalar kernel is of positive type.

\begin{definition}
For a finite graph $\mathfrak G$ on $X$, Schrijver's $\vartheta'$ is
\begin{equation}
 \vartheta'(\mathfrak G)=
 \max\left\{\sum_{x,y\in X}Z(x,y):
 \begin{array}{l}
 Z\in\RR^{X\times X},\\
 Z\succeq0,\quad Z(x,y)\geq0,\quad \displaystyle\sum_{x\in X}Z(x,x)=1,\\
 Z(x,y)=0\quad\text{whenever }x\sim_{\mathfrak G}y
 \end{array}\right\}.
 \label{eq:Schrijver-theta-prime}
\end{equation}
\end{definition}

Schrijver's bound satisfies
\begin{equation}
 \alpha(\mathfrak G)\leq\vartheta'(\mathfrak G)
 \label{eq:Schrijver-independence-bound}
\end{equation}
for every finite graph $\mathfrak G$~\cite{Schrijver1979} and specializes
to the linear programming bound in the Hamming space setting we consider.

\begin{theorem}[generalized moving-subspace bound]
\label{thm:generalized-moving-subspace-bound}
Let $0\ne\rho\succeq0$ in $\mathcal H_{E,\Omega}$ and $\Lambda\in\RR$ satisfy
\begin{equation}
 \Theta_\Omega(\rho)\succeq\Lambda\rho,
 \label{eq:Hecke-survival}
\end{equation}
and let $\tau<\Lambda$.  Write
\begin{equation}
 D_\rho=\dim\ran\rho,
 \qquad
 r_\rho=\rank K_\rho(o,o).
 \label{eq:moving-subspace-ranks}
\end{equation}
Then
\begin{equation}
 \vartheta'(\mathfrak G_t(\tau))\leq
 \frac{1-\tau}{\Lambda-\tau}\frac{D_\rho}{r_\rho}.
 \label{eq:generalized-moving-subspace-bound}
\end{equation}
\end{theorem}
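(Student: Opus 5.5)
The plan is to build, from $\rho$, an explicit scalar kernel $f$ on $X$ that serves as a dual certificate for Schrijver's $\vartheta'$, the analogue of a Delsarte auxiliary polynomial. The natural candidate uses the bilinear trace contraction of Definition~\ref{def:bilinear-trace-contraction}:
\[
 f(x,y)=\bigl(t(x,y)-\tau\bigr)\,\mathcal T_E(\rho,\rho)(x,y)
 =\mathcal T_E(\mathsf S_t(\rho),\rho)(x,y)-\tau\,\mathcal T_E(\rho,\rho)(x,y).
\]
By \eqref{eq:bundle-kernel-covariance}, $K_\rho(y,x)=K_\rho(x,y)^*$, so $\mathcal T_E(\rho,\rho)(x,y)=\|K_\rho(x,y)\|_{\mathrm{HS}}^2\ge0$. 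Hence $f\le0$ at every pair of distinct non-adjacent points of $\mathfrak G_t(\tau)$. On the diagonal, $f(x,x)=(1-\tau)\|K_\rho(o,o)\|_{\mathrm{HS}}^2$.

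\emph{Step 1: weak duality.} Take a feasible $Z$ for \eqref{eq:Schrijver-theta-prime}. Averaging over $\mathsf G$ preserves feasibility and the objective, so we may assume $Z$ is invariant, i.e.\ $Z\in\mathcal H_1^+$ with nonnegative entries. Since $Z\ge0$ off the diagonal, $Z=0$ on edges, and $f\le0$ on non-edges,
\[
 \sum_{x,y}Z(x,y)f(x,y)\le(1-\tau)\|K_\rho(o,o)\|_{\mathrm{HS}}^2 .
\]
The task is to bound the left side from below by a multiple of $\sum_{x,y}Z(x,y)$.

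\emph{Step 2: lower bound for the pairing.} Because $Z$ is invariant and positive semidefinite, and $J/|X|$ is one of its spectral projections, we have $Z\succeq\lambda_0\,J/|X|$, where $\lambda_0=\sum_yZ(x,y)$ is the common row sum and $\sum_{x,y}Z=|X|\lambda_0$. Let $\Phi_Z\in\mathcal H_E^+$ be the operator with kernel $Z(x,y)K_\rho(x,y)$; it is positive by the Schur-product argument behind \eqref{eq:trace-contraction-Gram}. Its pairing with $f$ is
\[
 \langle Z,f\rangle=\Tr\bigl(\mathsf S_t(\rho)\Phi_Z\bigr)-\tau\Tr(\rho\Phi_Z),
\]
up to the normalization in \eqref{eq:kernel-operator-trace}. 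Using that $\mathsf S_t$ is self-adjoint and that $\rho=z_\Omega\rho z_\Omega$, I would rewrite the first term through $\Theta_\Omega$ and then apply \eqref{eq:Hecke-survival}. The target is to show that the $J$-component of $Z$ contributes at least $\lambda_0(\Lambda-\tau)\Tr(\rho^2)$, up to the trace normalizations, and that the remainder $Z-\lambda_0J/|X|\succeq0$ contributes nonnegatively.

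\emph{Step 3: ranks.} Combining Steps 1 and 2 gives
\[
 \sum_{x,y}Z(x,y)\le\frac{1-\tau}{\Lambda-\tau}\cdot\frac{\|K_\rho(o,o)\|_{\mathrm{HS}}^2}{\text{normalized }\Tr(\rho^2)}.
\]
By Cauchy--Schwarz on the spectrum of $\rho$ we have $\Tr(\rho^2)\ge(\Tr\rho)^2/D_\rho$. By \eqref{eq:kernel-operator-trace}, $\Tr\rho=\tr K_\rho(o,o)$, and Cauchy--Schwarz on the spectrum of $K_\rho(o,o)$ gives $\|K_\rho(o,o)\|_{\mathrm{HS}}^2\ge(\tr K_\rho(o,o))^2/r_\rho$, which points the wrong way. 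So I expect to need $\rho$ replaced by a suitable normalization, for instance its range projection or a spectral reweighting, so that the ratio is exactly $D_\rho/r_\rho$. The cleanest route may be to run the certificate with the contraction $\mathcal T_E(\rho,P)$, where $P$ is the range projection of $\rho$. Then $\Tr(P)=D_\rho$ counts the ranks correctly, and $\tr K_P(o,o)$ together with $\operatorname{rank}K_\rho(o,o)$ produce $r_\rho$.

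\emph{Main obstacle.} The hard step is Step 2: $\mathsf S_t(\rho)-\tau\rho$ need not be positive, so monotonicity in $\Phi_Z$ is not automatic. I would first try to isolate the $J/|X|$-component of $Z$, on which the pairing reduces exactly to $\Tr(\Theta_\Omega(\rho)\rho)-\tau\Tr(\rho^2)\ge(\Lambda-\tau)\Tr(\rho^2)$. For the positive remainder, I would try to show that its contribution is nonnegative by exploiting that $f$ is a difference of two positive-type kernels, with the $t$-weighted part dominating after projection to $\Omega$. If that fails, the fallback is the moving-subspace device from \cite[Chapter~2]{OpenAI2026}: count each point's fiber subspace $\ran K_\rho(x,x)$ inside $\ran\rho$. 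This yields the dimension ratio $D_\rho/r_\rho$ directly as a packing-of-subspaces bound, with the $(1-\tau)/(\Lambda-\tau)$ factor coming from the near-orthogonality forced by $t(x,y)\le\tau$ together with \eqref{eq:Hecke-survival}.
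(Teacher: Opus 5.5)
Your certificate $f=(t-\tau)\mathcal T_E(\rho,\rho)$ is the right one, and so are the weak-duality pairing against an invariant feasible $Z$ and the splitting $Z=\lambda_0 J/|X|+Z'$ with $Z'\succeq0$ (here $J$ is the all-ones matrix). This is the primal form of the paper's dual criterion $\vartheta'\leq B(o,o)/\overline B$.

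\textbf{Step 2.} The argument stalls at the step you flag, and the obstacle you name there does not exist: $\mathsf S_t(\rho)-\tau\rho$ \emph{is} always positive. Seeing this is the missing idea. Since $\mathsf S_t(\rho)\in\mathcal H_E$ and $z_\Omega$ is central, the cross terms $z_\Omega\mathsf S_t(\rho)(1-z_\Omega)$ vanish, so
\[
 \mathsf S_t(\rho)-\Lambda\rho
 =(1-z_\Omega)\mathsf S_t(\rho)(1-z_\Omega)
 +\bigl(\Theta_\Omega(\rho)-\Lambda\rho\bigr)\succeq0.
\]
The first term is positive because $\mathsf S_t$ is a positive map (Proposition~\ref{prop:multiplication-Stinespring}); the second is positive by \eqref{eq:Hecke-survival}. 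Hence $\mathsf S_t(\rho)-\tau\rho\succeq(\Lambda-\tau)\rho\succeq0$. It follows that $f=\mathcal T_E(\mathsf S_t(\rho)-\tau\rho,\rho)$ is itself of positive type by \eqref{eq:trace-contraction-positive-cones}.

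In your notation the remainder then needs no separate treatment. We have $\Phi_Z=(\lambda_0/|X|)\rho+\Phi_{Z'}$ with $\Phi_{Z'}\succeq0$. Pairing with the positive operator $\mathsf S_t(\rho)-\tau\rho$ gives
\[
 \sum_{x,y}Z(x,y)f(x,y)\geq\Bigl(\sum_{x,y}Z(x,y)\Bigr)(\Lambda-\tau)\Tr(\rho^2).
\]
Nothing about the $t$-weighted part ``dominating after projection to $\Omega$'' is needed. The part of $\mathsf S_t(\rho)$ off $\Omega$ is positive and block-diagonal, so it is simply discarded. Your subspace-packing fallback is not an argument as written.

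\textbf{Step 3.} This step is also left unresolved, and neither proposed fix works.
\begin{itemize}
 \item The range projection $P$ of $\rho$ need not satisfy $\Theta_\Omega(P)\succeq\Lambda P$, so it cannot replace $\rho$.
 \item The mixed kernel $\mathcal T_E(\rho,P)(x,y)=\langle K_P(x,y),K_\rho(x,y)\rangle_{\mathrm{HS}}$ is not pointwise nonnegative in general. So $(t-\tau)\mathcal T_E(\rho,P)$ loses the sign condition on non-edges.
\end{itemize}

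The normalization must preserve the hypothesis \eqref{eq:Hecke-survival}. The paper conjugates $\rho$ by the $\mathsf G$-equivariant pointwise multiplier $W_\rho$ that equals $K_\rho(o,o)^{-1/2}$ at the basepoint (and zero on its kernel). This works for three reasons:
\begin{itemize}
 \item Because $\mathsf S_t$ multiplies kernels by scalars, $\mathsf S_t(W_\rho\rho W_\rho)=W_\rho\mathsf S_t(\rho)W_\rho$. Also $W_\rho\in\mathcal H_E$ commutes with $z_\Omega$. So the survival inequality persists.
 \item Positivity of the kernel gives $\rho=p_\rho\rho p_\rho$, so the rank $D_\rho$ is unchanged.
 \item The new kernel has $K(o,o)=p_\rho$, a projection of rank $r_\rho$.
\end{itemize}
Then $f(o,o)=(1-\tau)r_\rho$, $\Tr\rho=r_\rho$, and $\Tr(\rho^2)\geq r_\rho^2/D_\rho$. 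This gives exactly $\frac{1-\tau}{\Lambda-\tau}\frac{D_\rho}{r_\rho}$.
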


\begin{proof}
For a scalar kernel $B$ let $\overline B$ denote its uniform average over
$X^2$.  Semidefinite duality and averaging over $\mathsf G$ show that any
invariant positive-type kernel $B$ with $\overline B>0$ and
$B(x,y)\leq0$ on allowed off-diagonal pairs satisfies
\begin{equation}
 \vartheta'(\mathfrak G)\leq\frac{B(o,o)}{\overline B}.
 \label{eq:invariant-theta-prime-dual-criterion}
\end{equation}

Since $\mathsf S_t(\rho)\succeq0$ belongs to $\mathcal H_E$ and
$z_\Omega$ is central,
\begin{equation}
 \mathsf S_t(\rho)-\Lambda\rho
 =(1-z_\Omega)\mathsf S_t(\rho)(1-z_\Omega)
  +\Theta_\Omega(\rho)-\Lambda\rho\succeq0.
 \label{eq:global-Hecke-positivity}
\end{equation}
The first summand is positive semidefinite since $\mathsf S_t(\rho)$ is, and the rest is positive semidefinite by the assumption~\eqref{eq:Hecke-survival}.
Define
\begin{equation}
 \mathcal Q_\rho(x,y)
 =\mathcal T_E(\rho,\rho)(x,y)
 =\|K_\rho(x,y)\|_{\mathrm{HS}}^2,
 \qquad
 B_{\rho,\tau}=(t-\tau)\mathcal Q_\rho.
 \label{eq:Hecke-certificate}
\end{equation}
The bilinear trace contraction shows that $\mathcal Q_\rho$ and
\begin{equation}
 (t-\Lambda)\mathcal Q_\rho
 =\mathcal T_E(\mathsf S_t(\rho)-\Lambda\rho,\rho)
 \label{eq:Hecke-positive-kernel}
\end{equation}
are positive-type scalar kernels, while $\mathcal Q_\rho(x,y)\geq0$
pointwise.  Thus
\[
 B_{\rho,\tau}
 =(t-\Lambda)\mathcal Q_\rho
  +(\Lambda-\tau)\mathcal Q_\rho
\]
is of positive type and is nonpositive when
$x\ne y$ and $t(x,y)\leq\tau$.  The kernel normalization gives
\begin{equation}
 \overline{\mathcal T_E(C,D)}
 =\frac1{|X|^2}\sum_{x,y\in X}
 \tr\bigl(K_C(x,y)K_D(y,x)\bigr)
 =\Tr(CD).
 \label{eq:trace-contraction-mean}
\end{equation}
Since
$B_{\rho,\tau}=\mathcal T_E(\mathsf S_t(\rho)-\tau\rho,\rho)$,
Equation~\eqref{eq:trace-contraction-mean} gives
\begin{align}
 \overline{B_{\rho,\tau}}
 &=\Tr\bigl((\mathsf S_t(\rho)-\tau\rho)\rho\bigr)\notag\\
 &=\Tr\bigl((\mathsf S_t(\rho)-\Lambda\rho)\rho\bigr)
   +(\Lambda-\tau)\Tr(\rho^2)\notag\\
 &\geq(\Lambda-\tau)\Tr(\rho^2)>0.
 \label{eq:Hecke-certificate-mean}
\end{align}
Here the last inequality uses
$\mathsf S_t(\rho)-\Lambda\rho\succeq0$ from
Equation~\eqref{eq:global-Hecke-positivity} and $\rho\succeq0$.
Therefore $B_{\rho,\tau}$ is feasible.

It remains to bound the objective.  We first normalize $\rho$.
In the induced-function model, let $p_\rho$ be the projection onto the
support of $K_\rho(o,o)$ and let $W_\rho=K_\rho(o,o)^{-1/2}$ on that
support and zero on its kernel.  Since $\mathsf S_t$ is scalar multiplication,
$\mathsf S_t(W_\rho \rho W_\rho) = W_\rho \mathsf S_t(\rho) W_\rho$.
Centrality of $z_\Omega$ then gives
\[
 \Theta_\Omega(W_\rho\rho W_\rho)
 =W_\rho\Theta_\Omega(\rho)W_\rho
 \succeq\Lambda W_\rho\rho W_\rho.
\]
Moreover,
\[
 K_{W_\rho\rho W_\rho}(o,o)=p_\rho,
 \qquad
 \rank K_{W_\rho\rho W_\rho}(o,o)=r_\rho.
\]
Positivity gives $\rho=p_\rho\rho p_\rho$, so $W_\rho$ is invertible on
$\ran\rho$ and
\[
 \rank(W_\rho\rho W_\rho)=\rank\rho=D_\rho.
\]
Replacing $\rho$ by $W_\rho\rho W_\rho$ and retaining the notation $\rho$,
we may therefore assume that
\[
 K_\rho(o,o)=p_\rho,
 \qquad \Tr(\rho)=r_\rho,
 \qquad \rank\rho=D_\rho.
\]

Apply the feasible-kernel construction above to this normalized datum and
write $B_\tau=B_{\rho,\tau}$.  Then
\[
 B_\tau(o,o)=(1-\tau)r_\rho,
 \qquad
 \overline{B_\tau}
 \geq(\Lambda-\tau)\Tr(\rho^2).
\]
Trace Cauchy--Schwarz gives
$\Tr(\rho^2)\geq r_\rho^2/D_\rho$.  Combining this with
Equation~\eqref{eq:invariant-theta-prime-dual-criterion} proves
Equation~\eqref{eq:generalized-moving-subspace-bound}.
\end{proof}

\subsection{Horn pairs and asymptotic bounds}

We first fix the Hamming homogeneous space.  Let
$\mathsf B_n=C_2^n\rtimes\mathfrak S_n$ and identify
\begin{equation}
 X_n=\mathsf B_n/\mathfrak S_n\cong\{0,1\}^n,
 \qquad
 t_n(x,y)=1-\frac{2d_H(x,y)}n.
 \label{eq:Hamming-homogeneous-space}
\end{equation}
Let $\mathsf B_n$ act on $\CC^n$ by signed permutations and put
\begin{equation}
 \ell_{n,x}=\frac1{\sqrt n}\sum_{i=1}^n(-1)^{x_i}e_i.
 \label{eq:Hamming-coordinate-lift}
\end{equation}
Then $\ell_{n,0}$ is $\mathfrak S_n$-fixed and
$t_n(x,y)=\langle\ell_{n,x},\ell_{n,y}\rangle$.  Thus $t_n$ is the
positive-type coordinate associated with $\ell_n$ in
Definition~\ref{def:threshold-graph}.

Fix $m$ and give $(\CC^m)^{\otimes n}$ its tensor-permutation $\mathfrak S_n$-action.
Fourier transform on $C_2^n$ gives
\begin{equation}
 \Ind_{\mathfrak S_n}^{\mathsf B_n}(\CC^m)^{\otimes n}
 \cong(\CC^2\otimes\CC^m)^{\otimes n}.
 \label{eq:Hamming-induced-Fourier-model}
\end{equation}
Here the $i$th generator of $C_2^n$ acts by $\zeta_i$, where
$\zeta=\operatorname{diag}(1,-1)$ on $\CC^2$, and $\mathfrak S_n$ permutes
the $n$ tensor factors.  Write
$\sigma=\left(\begin{smallmatrix}0&1\\1&0\end{smallmatrix}\right)$ and let
$\sigma_i$ act in the $i$th $\CC^2$ factor.  The multiplication-by-$t_n$
map is
\begin{equation}
 \mathsf S_{t_n}(A)=\frac1n\sum_{i=1}^n\sigma_iA\sigma_i.
 \label{eq:Hamming-multiplication-map}
\end{equation}

\begin{definition}
\label{def:Hamming-tensor-pair}
Let $K,L\in M_m(\CC)$ satisfy
$K,L\succeq0$ and $\tr(K+L)=1$.  In the decomposition
$\CC^2\otimes\CC^m\cong\CC^m\oplus\CC^m$, put
\begin{equation}
 \mathcal A_n=(K\oplus L)^{\otimes n},\qquad
 \mathcal R_n=\mathcal A_n^{1/2}.
 \label{eq:Hamming-test-operators}
\end{equation}
Since $K\oplus L$ is block diagonal, both operators belong to the generalized
Hecke algebra.
\end{definition}

\begin{lemma}
\label{lem:Hamming-trace-identity}
For $\mathcal A_n$ and $\mathcal R_n$ as above, we have
\begin{equation}
 \Tr(\mathcal R_n^2)=1,\qquad
 \Tr\bigl(\mathcal R_n\mathsf S_{t_n}(\mathcal R_n)\bigr)
 =2\tr(K^{1/2}L^{1/2}).
 \label{eq:Hamming-exact-overlap}
\end{equation}
\end{lemma}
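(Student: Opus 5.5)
The plan is a direct computation in the Fourier model \eqref{eq:Hamming-induced-Fourier-model}. The isomorphism $\Ind_{\mathfrak S_n}^{\mathsf B_n}(\CC^m)^{\otimes n}\cong(\CC^2\otimes\CC^m)^{\otimes n}$ is unitary for the normalized inner product \eqref{eq:induced-inner-product}. Hence the operator trace $\Tr$ on $\mathcal I_E$ is the ordinary matrix trace on $(\CC^2\otimes\CC^m)^{\otimes n}$. Both $\mathcal A_n$ and $\mathcal R_n$ are elementary tensors there. I will fix the convention that, under $\CC^2\otimes\CC^m\cong\CC^m\oplus\CC^m$, the two summands are the $\zeta=\pm1$ eigenspaces. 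Then $\mathcal R_n=(K^{1/2}\oplus L^{1/2})^{\otimes n}$, since the square root of a tensor power of a positive block-diagonal operator is the tensor power of the blockwise square roots.

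For the first identity, $\mathcal R_n^2=\mathcal A_n$, and the trace is multiplicative on tensor products. This gives
\[
\Tr(\mathcal R_n^2)=\bigl(\tr(K\oplus L)\bigr)^n=\bigl(\tr K+\tr L\bigr)^n=1 .
\]

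For the second identity, I use the explicit form \eqref{eq:Hamming-multiplication-map} of $\mathsf S_{t_n}$, which reduces the task to computing $\Tr(\mathcal R_n\sigma_i\mathcal R_n\sigma_i)$ for each $i$. The operator $\sigma_i$ is $\sigma\otimes I_m$ in the $i$th factor and the identity elsewhere. In the $\CC^m\oplus\CC^m$ picture, $\sigma\otimes I_m$ swaps the two summands, so conjugation gives
\[
\sigma(K^{1/2}\oplus L^{1/2})\sigma=L^{1/2}\oplus K^{1/2}.
\]
The trace then factorizes. Each factor $j\ne i$ contributes $\tr(K\oplus L)=1$. Factor $i$ contributes
\[
\tr\bigl((K^{1/2}\oplus L^{1/2})(L^{1/2}\oplus K^{1/2})\bigr)=\tr(K^{1/2}L^{1/2})+\tr(L^{1/2}K^{1/2})=2\tr(K^{1/2}L^{1/2}).
\]
Averaging over $i$ with weight $1/n$ leaves this value unchanged, which gives \eqref{eq:Hamming-exact-overlap}.

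The only step requiring care is bookkeeping rather than analysis. One must confirm two things: that the normalization of $\Tr$ via \eqref{eq:kernel-operator-trace} agrees with the plain trace in the Fourier model, and that $\sigma_i$ really is the block swap exchanging the $K$ and $L$ summands. For the first, I would check it on the identity operator. Both sides give $\dim\mathcal I_E=(2m)^n$, because $K_I(x,x)=|X|\,I_E$ under the kernel normalization \eqref{eq:bundle-kernel-action}. For the second, it follows because $\sigma$ anticommutes with $\zeta$ and so exchanges its eigenspaces. This also shows that $\sigma_i$ implements multiplication by the character of the $i$th coordinate that produces $t_n$.
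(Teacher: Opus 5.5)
Your proposal is correct and is essentially the paper's proof: $\Tr(\mathcal R_n^2)=\tr(K+L)^n=1$, and conjugation by $\sigma_i$ exchanges $K^{1/2}$ and $L^{1/2}$ in the $i$th factor. That factor contributes $2\tr(K^{1/2}L^{1/2})$, every other factor contributes $1$, and averaging over $i$ gives the claim. Your bookkeeping checks are sound, though the operator trace is similarity-invariant, so unitarity of the Fourier isomorphism is really needed only to identify $\mathcal R_n$ with $(K^{1/2}\oplus L^{1/2})^{\otimes n}$.
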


\begin{proof}
The trace of $\mathcal A_n$ is $\tr(K+L)^n=1$.  Conjugation by
$\sigma_i$ exchanges $K^{1/2}$ and $L^{1/2}$ in the $i$th block.  That factor
therefore contributes $2\tr(K^{1/2}L^{1/2})$, while each remaining factor
contributes $\tr(K+L)=1$.  Averaging over $i$ proves the displayed formula.
\end{proof}

\begin{definition}[normalized Horn pair]
\label{def:normalized-Horn-pair}
A pair $(K,L)$ satisfying Definition~\ref{def:Hamming-tensor-pair} is a
\emph{normalized Horn pair}.
Write $G=K+L$, and let $k,l,g\in\RR_{\geq0}^m$ be the decreasing eigenvalue
lists of $K,L,G$, padded by zeros when necessary.  Following \cite{GayJeronimoLiu2026}, we define
\begin{equation}
 \Gamma(K,L)=2\tr(K^{1/2}L^{1/2}),\qquad
 \Phi(K,L)=\Ent(K)+\Ent(L)-\Ent(G).
 \label{eq:local-channel-coordinates}
\end{equation}
Here $\Ent(A)=-\tr(A\log_2A)$, with the usual continuous value at zero.
\end{definition}
Gay, Jeronimo, and Liu organize their paper around the analogy to the Horn problem.
To show that it is a bound for codes, they identify $\Phi$ with the uniform-prior Holevo
information and $(1-\Gamma)/2$ with the pretty-good-measurement bit error
\cite[Proposition~8.1]{GayJeronimoLiu2026} and use the bound in \cite{AlrabiahGuruswami2026}.
They then use $\Phi$ as the objective with constraint on $\Gamma$ in their matrix hierarchy
\cite[Definition~8.2 and Theorem~8.3]{GayJeronimoLiu2026}.

To decompose the tensor powers, let
$W_\lambda^{(m)}$ denote the irreducible polynomial $U(m)$-module of highest
weight $\lambda$, and let $S^\lambda$ denote the Specht module of shape
$\lambda$.  The tensor model has the decompositions
\begin{align}
 (\CC^2\otimes\CC^m)^{\otimes n}
 &\cong
 \bigoplus_{|\alpha|+|\beta|=n}
 U_{\alpha,\beta}\otimes
 W_\alpha^{(m)}\otimes W_\beta^{(m)},
 \label{eq:wreath-Schur-Weyl-decomposition}\\
 (\CC^m)^{\otimes n}
 &\cong
 \bigoplus_{\gamma\vdash n}S^\gamma\otimes W_\gamma^{(m)}.
 \label{eq:fiber-Schur-Weyl-decomposition}
\end{align}
Here $U_{\alpha,\beta}$ is the irreducible $\mathsf B_n$-module indexed by
the bipartition $(\alpha,\beta)$; only partitions of length at most $m$
occur.  The passage between the two decompositions is Littlewood--Richardson
branching:
\begin{equation}
 \Res_{\mathfrak S_n}^{\mathsf B_n}U_{\alpha,\beta}
 \cong
 \Ind_{\mathfrak S_{|\alpha|}\times\mathfrak S_{|\beta|}}^{\mathfrak S_n}
       (S^\alpha\boxtimes S^\beta)
 \cong
 \bigoplus_{\gamma\vdash n}c_{\alpha\beta}^{\gamma}S^\gamma.
 \label{eq:wreath-Littlewood-Richardson-branching}
\end{equation}
Heuristically, this branching rule quantizes the moment map
$\mu_+:\mathfrak u(m)^*\times\mathfrak u(m)^*\to\mathfrak u(m)^*$,
$\mu_+(K,L)=K+L$.  Under the orbit-method correspondence, the highest weight
$\lambda$ parametrizes the coadjoint orbit quantized by $W_\lambda^{(m)}$, where
at scale $n$, $\lambda/n$ is its normalized classical parameter.  Thus
$(\alpha/n,\beta/n,\gamma/n)$ should approximate the spectral data $(k,l,g)$.
At finite $n$, the compatible triples are those for which
$c_{\alpha\beta}^{\gamma}>0$.
The spectral input we will use is the following concentration theorem.
A similar concentration result was also
studied earlier by Alicki, Rudnicki, and Sadowski
\cite{AlickiRudnickiSadowski1988}.

\begin{theorem}[Keyl--Werner]
\label{thm:Keyl-Werner}
Let $\tau$ be a density matrix on $\CC^m$ with decreasing eigenvalue list
$s$, and let $\Pi_\lambda^{(N)}$ be the $\mathfrak S_N$-isotypic projection
of shape $\lambda$ in $(\CC^m)^{\otimes N}$.  The numbers
\begin{equation}
 \mathbb P_{\tau,N}(\lambda)
 =\tr\bigl(\tau^{\otimes N}\Pi_\lambda^{(N)}\bigr)
 \label{eq:Schur-Weyl-measure}
\end{equation}
form a probability measure on partitions of $N$ with at most $m$ parts.
View $\lambda/N$ as an $m$-vector by appending zero parts.  For every
$\varepsilon>0$,
\begin{equation}
 \sum_{\substack{\lambda\vdash N\\
        \|\lambda/N-s\|_1>\varepsilon}}
 \tr\bigl(\tau^{\otimes N}\Pi_\lambda^{(N)}\bigr)
 \longrightarrow0.
 \label{eq:Keyl-Werner-concentration}
\end{equation}
Equivalently, the normalized row-length vector $\lambda/N$ converges in
probability to the spectrum $s$.
\end{theorem}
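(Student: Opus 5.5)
Since this is the Keyl--Werner theorem, the plan is a self-contained proof of the concentration statement. Two facts are needed. First, $\Pi_\lambda^{(N)}$ are orthogonal projections, mutually orthogonal, and summing to the identity on $(\CC^m)^{\otimes N}$. This comes from the Schur--Weyl decomposition \eqref{eq:fiber-Schur-Weyl-decomposition}, and only shapes with at most $m$ rows occur. Together with positivity of $\tau^{\otimes N}$ and $\tr\tau^{\otimes N}=1$, this gives the probability-measure claim.

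Second, since each $\Pi_\lambda^{(N)}$ commutes with $U^{\otimes N}$, we may diagonalize $\tau=\operatorname{diag}(s)$. Then
\[
\tr(\tau^{\otimes N}\Pi_\lambda^{(N)})=\dim S^\lambda\cdot s_\lambda(s_1,\ldots,s_m),
\]
where $s_\lambda$ is the Schur polynomial, i.e.\ the character of $W_\lambda^{(m)}$.

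The core is a large-deviation upper bound for each individual $\lambda$. For the Schur polynomial, the weights of $W_\lambda^{(m)}$ are permutations of dominated vectors $\mu\preceq\lambda$. Hence each monomial satisfies $s^\mu\le s^{\lambda^\downarrow}$ when $s$ is decreasing; this is a standard rearrangement and majorization fact, and the Schur property applies because $\mu\preceq\lambda$. It follows that
\[
s_\lambda(s)\le \dim W_\lambda^{(m)}\prod_i s_i^{\lambda_i}\le (N+1)^{m^2}\prod_i s_i^{\lambda_i},
\]
since Weyl's dimension formula makes $\dim W_\lambda^{(m)}$ polynomial in $N$. For the Specht module, use $\dim S^\lambda\le\binom{N}{\lambda_1,\ldots,\lambda_m}\le 2^{N h(\lambda/N)}$, where $h$ is Shannon entropy. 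Writing $\bar\lambda=\lambda/N$, combining gives
\[
\tr(\tau^{\otimes N}\Pi_\lambda^{(N)})\le (N+1)^{m^2}\,2^{-N\,D(\bar\lambda\|s)},
\]
with $D$ the relative entropy (in bits).

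To finish, note that the number of partitions of $N$ with at most $m$ parts is at most $(N+1)^m$. On the set $\|\bar\lambda-s\|_1>\varepsilon$, Pinsker's inequality gives $D(\bar\lambda\|s)\ge \varepsilon^2/(2\ln 2)$. Therefore the sum in \eqref{eq:Keyl-Werner-concentration} is at most
\[
(N+1)^{m+m^2}\,2^{-N\varepsilon^2/(2\ln2)}\to0.
\]
Zero eigenvalues need care: if $s_i=0$ and $\lambda_i>0$, the monomial bound already gives zero, so those terms vanish and nothing breaks. Convergence in probability is just a restatement of the limit.

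The main obstacle is the monomial bound $s^\mu\le s^{\lambda}$ for weights $\mu$ of $W_\lambda^{(m)}$. I would prove it from two facts. First, weights lie in the convex hull of the $\mathfrak S_m$-orbit of $\lambda$ (Kostka positivity occurs only for $\mu\preceq\lambda$). Second, $\mu\mapsto\sum_i\mu_i\log s_i$ is linear, so it is maximized at a vertex of that hull. With $s$ decreasing, the rearrangement inequality puts that maximizing vertex at $\lambda$ itself.
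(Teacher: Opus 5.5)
Your proof is correct, but it takes a different route from the paper. The paper gives no self-contained argument: it cites the Keyl--Werner large-deviation principle and observes that the good rate function vanishes only at $s$, so it has a strictly positive minimum on the closed set $\{u:\|u-s\|_1\geq\varepsilon\}$, which forces the tail mass to $0$.

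You instead prove only the upper deviation estimate you need, and you prove it directly. The identity $\tr(\tau^{\otimes N}\Pi_\lambda^{(N)})=f^\lambda s_\lambda(s)$, the dominance bound $s^\mu\leq s^\lambda$ on weights of $W_\lambda^{(m)}$, Weyl's polynomial bound on $\dim W_\lambda^{(m)}$, and $f^\lambda\leq\binom{N}{\lambda_1,\ldots,\lambda_m}\leq 2^{N\mathsf h(\lambda/N)}$ combine to give $(N+1)^{m^2}2^{-N D(\lambda/N\|s)}$. Pinsker then replaces the compactness argument. This is the standard estimate behind the upper half of the Keyl--Werner principle, whose rate function is exactly this relative entropy. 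So you skip the lower bound and the good-rate-function machinery.

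In exchange you get an explicit tail bound $(N+1)^{m^2+m}2^{-N\varepsilon^2/(2\ln 2)}$ that depends on $\tau$ only through $m$. That makes, for instance, the passage to the random tensor size $N_K$ in Lemma~\ref{lem:joint-spectral-concentration} fully quantitative. The paper's version is shorter given the citation, but purely qualitative.

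One small repair: your proof of $s^\mu\leq s^\lambda$ via the linear functional $\mu\mapsto\sum_i\mu_i\log s_i$ requires every $s_i>0$. For singular $\tau$ you can fix this in either of two ways:
\begin{enumerate}[label=\textup{(\alph*)}]
 \item Pass to the limit: both sides are polynomials in $s$, and $s$ is a limit of strictly positive decreasing vectors.
 \item Note that $\tau^{\otimes N}$ is supported on $(\operatorname{ran}\tau)^{\otimes N}$, which carries only shapes with at most $\rank\tau$ rows, so the remaining terms involve positive eigenvalues only.
\end{enumerate}
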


This is the form of Keyl and Werner's theorem that we will use below
\cite[Section~II, Theorem]{KeylWerner2001}.
It follows from their large-deviation principle because the LDP's good rate
function vanishes only at $s$, and therefore has a strictly positive minimum
on the closed subset of the probability simplex on which
$\|u-s\|_1\geq\varepsilon$.

For $\varepsilon>0$, let $z_{n,\varepsilon}$ be the central projection onto
the $\mathsf B_n$-types satisfying
\begin{equation}
 \left\|\left(\frac\alpha n,\frac\beta n\right)-(k,l)\right\|_1
 \leq\varepsilon,
 \label{eq:output-typical-window}
\end{equation}
and let $q_{n,\varepsilon}$ be the sum of the $\mathfrak S_n$-isotypic
projections in $(\CC^m)^{\otimes n}$ satisfying
\begin{equation}
 \left\|\frac\gamma n-g\right\|_1\leq\varepsilon.
 \label{eq:fiber-typical-window}
\end{equation}

\begin{lemma}
\label{lem:joint-spectral-concentration}
For every $\varepsilon>0$,
\begin{equation}
 \Tr\bigl(\mathcal A_n(I-z_{n,\varepsilon})\bigr)\longrightarrow0,
 \qquad
 \tr\bigl(G^{\otimes n}(I-q_{n,\varepsilon})\bigr)\longrightarrow0.
 \label{eq:fixed-window-concentration}
\end{equation}
\end{lemma}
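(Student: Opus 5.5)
The second statement is a direct application of Theorem~\ref{thm:Keyl-Werner}. Take $\tau=G$, which is a density matrix since $\tr G=1$, with spectrum $g$, and take $N=n$. By definition $I-q_{n,\varepsilon}$ is the sum of the isotypic projections $\Pi_\gamma^{(n)}$ with $\|\gamma/n-g\|_1>\varepsilon$. Hence $\tr(G^{\otimes n}(I-q_{n,\varepsilon}))$ is exactly the tail sum in \eqref{eq:Keyl-Werner-concentration}, and this tends to zero.

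For the first statement I would reduce to the fiber Schur--Weyl measures of $K$ and $L$ separately. Expand
\[
\mathcal A_n=(K\oplus L)^{\otimes n}=\sum_{S\subseteq[n]}K^{\otimes S}\otimes L^{\otimes S^c},
\]
where the $S$-term acts on the subspace in which the factors indexed by $S$ lie in the first $\CC^m$ summand and the others in the second. Grouping by $a=|S|$ and using $\mathfrak S_n$-symmetry identifies the $\mathsf B_n$-isotypic decomposition \eqref{eq:wreath-Schur-Weyl-decomposition}: the component of type $(\alpha,\beta)$ with $|\alpha|=a$ is induced from the $\mathfrak S_a\times\mathfrak S_{n-a}$-isotypic component of type $\alpha\boxtimes\beta$ in $(\CC^m)^{\otimes a}\otimes(\CC^m)^{\otimes(n-a)}$. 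This gives the product formula
\[
\Tr(\mathcal A_n z_{(\alpha,\beta)})=\binom na\,\tr\bigl(K^{\otimes a}\Pi_\alpha^{(a)}\bigr)\,\tr\bigl(L^{\otimes(n-a)}\Pi_\beta^{(n-a)}\bigr).
\]
Write $\kappa=\tr K$, so that $\tr L=1-\kappa$. The quantities above are then a binomial$(n,\kappa)$ distribution on $a$ times the Schur--Weyl measures of the normalized states $K/\kappa$ and $L/(1-\kappa)$.

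Now use concentration in each factor. The binomial weight concentrates on $|a/n-\kappa|\le\varepsilon'$ by the weak law of large numbers. Conditionally on $a$ in that window, Theorem~\ref{thm:Keyl-Werner} applied to $K/\kappa$ with $N=a\to\infty$ gives $\alpha/a\to k/\kappa$ in probability, and similarly $\beta/(n-a)\to l/(1-\kappa)$. Then
\[
\|\alpha/n-k\|_1\le \frac an\,\|\alpha/a-k/\kappa\|_1+\Bigl|\frac an-\kappa\Bigr|,
\]
and the analogous bound holds for $\beta$. Choosing $\varepsilon'$ small relative to $\varepsilon$ puts all but vanishing mass inside the window \eqref{eq:output-typical-window}.

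The main obstacle is a uniformity issue. Keyl--Werner is stated for a fixed $N\to\infty$, but here $N=a$ ranges over a window that grows with $n$. Since the tail probability at level $\varepsilon$ tends to zero as $N\to\infty$, its supremum over $N\ge(\kappa-\varepsilon')n$ also tends to zero, so this is harmless. Degenerate cases are handled directly: if $\kappa=0$ then $K=0$, only $a=0$ contributes, and $\alpha=\emptyset$ matches $k=0$; the case $\kappa=1$ is symmetric.
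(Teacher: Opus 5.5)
Your proposal is correct and follows essentially the same route as the paper. Keyl--Werner is applied directly to $G$; for $\mathcal A_n$, a binomial split of the $K$-labels gives conditionally independent Schur--Weyl measures for $K/\tr K$ and $L/\tr L$. Keyl--Werner is then applied at a random but growing tensor size, with the same uniformity device and the same triangle inequality. Your explicit product formula for $\Tr(\mathcal A_n z_{(\alpha,\beta)})$, obtained by inducing from $\mathfrak S_a\times\mathfrak S_{n-a}$-isotypic components, simply spells out the conditional-independence statement that the paper asserts in one line.
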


\begin{proof}
The second limit is Theorem~\ref{thm:Keyl-Werner} applied to $G$.  For the
first, put $p=\tr K$ and $q=\tr L=1-p$.  Under the state
$\mathcal A_n=(K\oplus L)^{\otimes n}$, the number $N_K$ of $K$-labels has
distribution $\operatorname{Bin}(n,p)$, and hence
\begin{equation}
 \Pr\left\{\left|\frac{N_K}{n}-p\right|>\eta\right\}
 \leq\frac{p(1-p)}{n\eta^2}.
 \label{eq:block-count-Chebyshev}
\end{equation}
Given $N_K=a$, the two Young diagrams are independent with laws
$\mathbb P_{K/p,a}$ and $\mathbb P_{L/q,n-a}$.  A factor is omitted when its
block mass is zero; a normalized diagram at tensor size zero may be assigned
an arbitrary value.  If $p>0$, then $N_K\to\infty$ in probability, so
Theorem~\ref{thm:Keyl-Werner}, applied at the random tensor size $N_K$, gives
\[
 \left\|\frac\alpha{N_K}-\frac{k}{p}\right\|_1
 \longrightarrow0
 \quad\text{in probability};
\]
indeed, one first restricts to $N_K\geq A$ and then lets $A\to\infty$.
The analogous statement holds for $\beta$ when $q>0$.  Finally,
\begin{align*}
 \left\|\left(\frac\alpha n,\frac\beta n\right)-(k,l)\right\|_1
 &\leq \frac{N_K}{n}
   \left\|\frac\alpha{N_K}-\frac{k}{p}\right\|_1
  +\frac{n-N_K}{n}
   \left\|\frac\beta{n-N_K}-\frac{l}{q}\right\|_1
  +2\left|\frac{N_K}{n}-p\right|,
\end{align*}
with the zero-mass terms omitted.  Equation~\eqref{eq:block-count-Chebyshev}
and the two conditional limits prove the first assertion.
\end{proof}

Choose integers $1=N_1<N_2<\cdots$ so that, for $n\geq N_j$, both errors in
\eqref{eq:fixed-window-concentration} with $\varepsilon=1/j$ are at most
$1/j$, and set $a_n=1/j$ for $N_j\leq n<N_{j+1}$.  Then $a_n\downarrow0$
and both errors remain $o(1)$ with $\varepsilon=a_n$.  Put
$\mathsf h(a)=-\sum_i a_i\log_2a_i$ for a nonnegative vector whose entries
need not sum to one.

\begin{definition}[joint typical subspace]
\label{def:joint-typical-subspace}
For the operators associated with $(K,L)$, put
$z_n=z_{n,a_n}$ and $q_n=q_{n,a_n}$.
Since $q_n$ is $\mathfrak S_n$-equivariant, it defines the pointwise
multiplier
\begin{equation}
 (\widehat q_nf)(b\mathfrak S_n)
 =bq_nb^{-1}f(b\mathfrak S_n).
 \label{eq:induced-fiber-projection}
\end{equation}
Put $E_n=q_n(\CC^m)^{\otimes n}$ and
\begin{equation}
 p_n=z_n\widehat q_n.
 \label{eq:joint-typical-subspace}
\end{equation}
The range of $p_n$ is the \emph{joint typical subspace}.  Equivalently,
it is the $z_n$-isotypic summand of
$\Ind_{\mathfrak S_n}^{\mathsf B_n}E_n$, represented inside the full Fourier
model.  Let $\Omega_n$ denote its set of $\mathsf B_n$-types and let
$\Theta_n$ be the map induced by $\mathsf S_{t_n}$ on this summand.
\end{definition}

\begin{proposition}
\label{prop:joint-Schur-Weyl-concentration}
The trace of $\mathcal A_n$ is concentrated on the joint typical subspace:
\begin{equation}
 \Tr\bigl(\mathcal A_n(I-p_n)\bigr)=o(1).
 \label{eq:joint-subspace-mass}
\end{equation}
Moreover, for every sequence of nonzero positive Hecke operators $\rho_n$
in the generalized Hecke algebra satisfying $\rho_n=p_n\rho_np_n$,
\begin{align}
 \limsup_{n\to\infty}\frac1n\log_2D_{\rho_n}
 &\leq\Ent(K)+\Ent(L),
 \label{eq:typical-output-rank}\\
 \liminf_{n\to\infty}\frac1n\log_2r_{\rho_n}
 &\geq\Ent(G).
 \label{eq:typical-basepoint-rank}
\end{align}
Consequently,
\begin{equation}
 \limsup_{n\to\infty}\frac1n\log_2
 \frac{D_{\rho_n}}{r_{\rho_n}}
 \leq\Phi(K,L).
 \label{eq:typical-cost}
\end{equation}
\end{proposition}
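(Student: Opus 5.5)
The plan has three parts: the mass statement \eqref{eq:joint-subspace-mass}, the upper bound \eqref{eq:typical-output-rank} on $D_{\rho_n}$, and the lower bound \eqref{eq:typical-basepoint-rank} on $r_{\rho_n}$. The last two combine at once to give \eqref{eq:typical-cost}. Throughout I would use the standard Schur--Weyl dimension estimates for a partition $\lambda\vdash N$ with at most $m$ rows:
\[
 \binom{N}{\lambda}(N+m)^{-m^2}\le f^\lambda\le\binom{N}{\lambda},
 \qquad
 \dim W_\lambda^{(m)}\le(N+1)^{m^2}.
\]
Here $f^\lambda=\dim S^\lambda$ and $\binom N\lambda$ is the multinomial coefficient. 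The lower bound on $f^\lambda$ follows from the Frobenius/hook formula. I would also use that $\frac1N\log_2\binom N\lambda=\mathsf h(\lambda/N)+O(\frac{\log N}N)$, and that $\mathsf h$ is uniformly continuous on the compact set of nonnegative vectors with mass at most $1$.

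\textbf{Mass.} I would write $I-p_n=(I-z_n)+z_n(I-\widehat q_n)$. Since $\mathcal A_n$ lies in the Hecke algebra, it commutes with $z_n$, and $\widehat q_n$ also commutes with $z_n$. Hence
\[
 \Tr\bigl(\mathcal A_n z_n(I-\widehat q_n)\bigr)\le\Tr\bigl(\mathcal A_n(I-\widehat q_n)\bigr),
\]
using positivity of $\mathcal A_n$ and of $I-\widehat q_n$, which commute with $z_n$. The first piece, $\Tr(\mathcal A_n(I-z_n))$, is $o(1)$ by Lemma~\ref{lem:joint-spectral-concentration}. For the second I would use the kernel trace formula \eqref{eq:kernel-operator-trace} with the pointwise multiplier $\widehat q_n$, together with $\mathsf G$-invariance, to get
\[
 \Tr\bigl(\mathcal A_n\widehat q_n\bigr)=\tr\bigl(K_{\mathcal A_n}(o,o)\,q_n\bigr).
\]
The key computation is that $K_{\mathcal A_n}(o,o)=G^{\otimes n}$. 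This factorizes over sites. At one site, evaluating the Fourier model $\CC^2\otimes\CC^m\cong\CC^m\oplus\CC^m$ back at the basepoint fiber $\CC^m$ sums the two character blocks, which turns $K\oplus L$ into $K+L=G$. I would check this normalization against $\Tr\mathcal A_n=1=\tr G^{\otimes n}$. With it, the second piece equals $\tr(G^{\otimes n}(I-q_n))=o(1)$, again by Lemma~\ref{lem:joint-spectral-concentration}.

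\textbf{Upper bound on $D_{\rho_n}$.} Since $\rho_n=p_n\rho_np_n$, we have $D_{\rho_n}\le\dim\ran p_n$. By \eqref{eq:wreath-Schur-Weyl-decomposition}, this is at most
\[
 \sum_{(\alpha,\beta)\in\Omega_n}\dim U_{\alpha,\beta}\,\dim W^{(m)}_\alpha\dim W^{(m)}_\beta .
\]
Here $\dim U_{\alpha,\beta}=\binom{n}{|\alpha|}f^\alpha f^\beta$, which is at most $\binom{n}{\alpha\sqcup\beta}$, the multinomial coefficient of the concatenated row vector. The number of bipartitions with at most $m$ rows each is polynomial in $n$. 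So
\[
 \frac1n\log_2 D_{\rho_n}\le\max_{\Omega_n}\mathsf h\bigl((\alpha,\beta)/n\bigr)+o(1).
\]
The window condition $\|(\alpha,\beta)/n-(k,l)\|_1\le a_n\to0$ and continuity of $\mathsf h$ then give the limit $\mathsf h(k,l)=\Ent(K)+\Ent(L)$.

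\textbf{Lower bound on $r_{\rho_n}$.} First, $K_{\rho_n}(o,o)\ne0$: by \eqref{eq:kernel-operator-trace} and invariance, $\tr K_{\rho_n}(o,o)=\Tr\rho_n>0$. Next, the covariance \eqref{eq:bundle-kernel-covariance} with $g\in\mathfrak S_n$, which fixes $o$, shows that $K_{\rho_n}(o,o)$ is an $\mathfrak S_n$-equivariant operator on $(\CC^m)^{\otimes n}$. Because $\widehat q_n\rho_n\widehat q_n=\rho_n$, we also have $K_{\rho_n}(o,o)=q_nK_{\rho_n}(o,o)q_n$. Its range is therefore a nonzero $\mathfrak S_n$-submodule of $q_n(\CC^m)^{\otimes n}$. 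Such a submodule contains some Specht module $S^\gamma$ with $\|\gamma/n-g\|_1\le a_n$. Hence $r_{\rho_n}\ge f^\gamma$, which by the lower dimension estimate is at least $2^{n\mathsf h(\gamma/n)-O(\log n)}$, and continuity gives $\liminf\frac1n\log_2r_{\rho_n}\ge\Ent(G)$.

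The main obstacle I expect is the identification $K_{\mathcal A_n}(o,o)=G^{\otimes n}$, which rests on getting the Fourier-model normalization in \eqref{eq:Hamming-induced-Fourier-model} exactly right. The rest is polynomial-factor bookkeeping.
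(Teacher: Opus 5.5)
Your proposal is correct and follows essentially the same route as the paper. The structure matches: the mass estimate comes from commuting $z_n$ and $\widehat q_n$ together with the two concentration limits; the bound on $D_{\rho_n}$ comes from the dimension count of the window; and the bound on $r_{\rho_n}$ comes from a Specht module $S^\gamma$ inside the range of $K_{\rho_n}(o,o)$. Two of your steps add detail the paper leaves implicit. Your site-by-site identification $K_{\mathcal A_n}(o,o)=G^{\otimes n}$ justifies the paper's one-line claim that $\Tr(\mathcal A_n(I-\widehat q_n))=\tr(G^{\otimes n}(I-q_n))$. Your nonasymptotic multinomial and hook bounds on $f^\lambda$ replace the paper's two-sided hook-formula asymptotic.
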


\begin{proof}
\emph{Mass.}
The choice of $a_n$ following Lemma~\ref{lem:joint-spectral-concentration}
gives
\begin{equation}
 \Tr(\mathcal A_n(I-z_n))=o(1),
 \label{eq:global-Schur-Weyl-mass}
\end{equation}
and
\begin{equation}
 \tr(G^{\otimes n}(I-q_n))=o(1).
 \label{eq:fiber-Schur-Weyl-mass}
\end{equation}
By the definition of the induced pointwise multiplier, the left side of
\eqref{eq:fiber-Schur-Weyl-mass} is
$\Tr(\mathcal A_n(I-\widehat q_n))$.  Since $z_n$ and $\widehat q_n$
commute,
\[
 I-p_n=I-z_n\widehat q_n
 \preceq (I-z_n)+(I-\widehat q_n).
\]
Taking the trace against $\mathcal A_n$ and using the preceding two estimates
proves \eqref{eq:joint-subspace-mass}.

\emph{The bound on $D_{\rho_n}$.}
If $\rho_n=p_n\rho_np_n$, then
$\ran\rho_n\subseteq\ran p_n\subseteq\ran z_n$.  The decomposition
\eqref{eq:wreath-Schur-Weyl-decomposition} consequently gives
\begin{equation}
 D_{\rho_n}\leq\rank z_n
 =\sum_{\substack{|\alpha|+|\beta|=n\\
  \|(\alpha/n,\beta/n)-(k,l)\|_1\leq a_n}}
   (\dim U_{\alpha,\beta})
   (\dim W_\alpha^{(m)})(\dim W_\beta^{(m)}).
 \label{eq:global-typical-rank}
\end{equation}
Here
\begin{equation}
 \dim U_{\alpha,\beta}
 =\binom n{|\alpha|}f^\alpha f^\beta,
 \label{eq:wreath-irrep-dimension}
\end{equation}
where $f^\lambda=\dim S^\lambda$.  For fixed $m$,
the Weyl dimension formula shows that $\dim W_\alpha^{(m)}$ and
$\dim W_\beta^{(m)}$ are polynomially bounded in $n$; the number of
bipartitions in the sum is polynomially bounded as well.  Thus these factors
contribute only $2^{o(n)}$.
The hook formula gives, uniformly over partitions of length at most $m$,
\begin{equation}
 \frac1n\log_2\!\left(
  \binom n{|\alpha|}f^\alpha f^\beta\right)
 =\mathsf h(\alpha/n)+\mathsf h(\beta/n)+o(1).
 \label{eq:wreath-hook-exponent}
\end{equation}
On the window \eqref{eq:output-typical-window} with $\varepsilon=a_n$, the
right side converges uniformly to $\mathsf h(k)+\mathsf h(l)$.  Applying this
estimate to
\eqref{eq:global-typical-rank} proves \eqref{eq:typical-output-rank}.

\emph{The bound on $r_{\rho_n}$.}
Let $B_n=K_{\rho_n}(o,o)$.  Since $\rho_n$ is nonzero and positive,
$B_n$ is a nonzero positive operator.  It is $\mathfrak S_n$-equivariant,
and the relation $\rho_n=p_n\rho_np_n$ implies
$B_n=q_nB_nq_n$.  Hence $\ran B_n$ is a nonzero
$\mathfrak S_n$-submodule of
\[
 E_n=\bigoplus_{\substack{\gamma\vdash n\\
  \|\gamma/n-g\|_1\leq a_n}}
 S^\gamma\otimes W_\gamma^{(m)}.
\]
It therefore contains a copy of $S^\gamma$ for at least one partition in the
fiber window, and so $r_{\rho_n}=\rank B_n\geq f^\gamma$.  Uniformly on that
window, the hook formula gives
\[
 \frac1n\log_2f^\gamma=\mathsf h(\gamma/n)+o(1)
 =\mathsf h(g)+o(1),
\]
which proves \eqref{eq:typical-basepoint-rank}.  Finally,
$\mathsf h(k)=\Ent(K)$, $\mathsf h(l)=\Ent(L)$, and
$\mathsf h(g)=\Ent(G)$.  Combining \eqref{eq:typical-output-rank} and
\eqref{eq:typical-basepoint-rank} proves \eqref{eq:typical-cost}.
\end{proof}

\begin{definition}
\label{def:asymptotic-realization}
An \emph{asymptotic realization} of a normalized Horn pair
$(K,L)$ is a sequence
\begin{equation}
 (E_n,\Omega_n,\rho_n,\Lambda_n)
 \label{eq:asymptotic-realization}
\end{equation}
such that $0\ne\rho_n\succeq0$ in $\mathcal H_{E_n,\Omega_n}$ satisfies
$\Theta_n(\rho_n)\succeq\Lambda_n\rho_n$ and
\begin{align}
 \liminf_{n\to\infty}\Lambda_n
 &\geq\Gamma(K,L),
 \label{eq:asymptotic-Hecke-survival}\\
 \limsup_{n\to\infty}\frac1n\log_2\frac{D_{\rho_n}}{r_{\rho_n}}
 &\leq\Phi(K,L).
 \label{eq:asymptotic-Hecke-cost}
\end{align}
\end{definition}

The following theorem shows that the optimization problem for Horn pairs
is the limit of the generalized moving-subspace construction in the above sense.

\begin{theorem}
\label{thm:realization}
Every normalized Horn pair has an asymptotic realization.  In
particular, if
\begin{equation}
 \Gamma(K,L)>1-2\delta,
 \label{eq:strict-channel-threshold}
\end{equation}
then
\begin{equation}
 R_D(\delta)\leq\Phi(K,L).
 \label{eq:channel-bound}
\end{equation}
\end{theorem}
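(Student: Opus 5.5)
The plan is to build $\rho_n$ by compressing the tensor-power test operator to the joint typical subspace, then pass to a Perron--Frobenius eigenvector of the compressed map. The cost bound \eqref{eq:asymptotic-Hecke-cost} will then come from Proposition~\ref{prop:joint-Schur-Weyl-concentration}, and the survival bound from Lemma~\ref{lem:Hamming-trace-identity}. Concretely, take $E_n=q_n(\CC^m)^{\otimes n}$ and $\Omega_n$ as in Definition~\ref{def:joint-typical-subspace}. View the Hecke algebra $\mathcal H_{E_n,\Omega_n}$ as $p_n\mathcal H p_n$ inside the full Fourier model; here $\mathcal H$ is the Hecke algebra for $E=(\CC^m)^{\otimes n}$. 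Under this identification, $\Theta_n(A)=p_n\mathsf S_{t_n}(A)p_n$.

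The map $\Theta_n$ is positive, self-adjoint for $\Tr$, and acts on a finite-dimensional cone. So its top eigenvalue $\Lambda_n$ on the symmetric space $\mathcal H_{E_n,\Omega_n}$ has an eigenvector $\rho_n\succeq0$ with $\Theta_n(\rho_n)=\Lambda_n\rho_n$; this is Perron--Frobenius for positive maps (Krein--Rutman). Self-adjointness gives the variational characterization
\[
 \Lambda_n=\max_{A=A^*\neq0}\frac{\Tr(A\Theta_n(A))}{\Tr(A^2)}.
\]
That the top eigenvalue admits a positive eigenvector is the usual argument: for self-adjoint positive maps, the spectral radius is attained on the cone, and the spectral radius dominates the largest eigenvalue. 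Since $\rho_n=p_n\rho_np_n$ is nonzero and positive, \eqref{eq:typical-cost} gives \eqref{eq:asymptotic-Hecke-cost} directly.

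For the survival bound, test with $A_n=p_n\mathcal R_np_n$. The operator $\mathcal R_n$ commutes with $z_n$, since $\mathcal R_n$ lies in the Hecke algebra. It also commutes with $\widehat q_n$: because $K\oplus L$ is block diagonal, $\mathcal R_n$ acts on each $C_2^n$-character sector as a tensor product of $K^{1/2}$'s and $L^{1/2}$'s, and $q_n$ commutes with the $U(m)$-type structure via Schur--Weyl. Granting this, $A_n=p_n\mathcal R_n$ and $\Tr(A_n^2)=\Tr(\mathcal A_np_n)=1-o(1)$. For the numerator, write
\[
 \Tr(A_n\mathsf S_{t_n}(A_n))=\Tr(\mathcal R_n\mathsf S_{t_n}(\mathcal R_n))-\text{(error terms)},
\]
where each error term involves $(I-p_n)\mathcal R_n$. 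Because $\mathsf S_{t_n}$ is unital completely positive, Cauchy--Schwarz for $\Tr(X^*\mathsf S(Y))$ bounds each error by $\|(I-p_n)\mathcal R_n\|_{2}\|\mathcal R_n\|_2=\Tr(\mathcal A_n(I-p_n))^{1/2}=o(1)$. Lemma~\ref{lem:Hamming-trace-identity} therefore gives $\Lambda_n\ge\Gamma(K,L)-o(1)$, which is \eqref{eq:asymptotic-Hecke-survival}.

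For the consequence, fix $\tau$ with $1-2\delta<\tau<\Gamma(K,L)$, so that $\tau<\Lambda_n$ for large $n$. Points at distance $\ge\lceil\delta n\rceil$ satisfy $t_n\le 1-2\delta<\tau$, so a code of that distance is independent in $\mathfrak G_{t_n}(\tau)$. The Delsarte program is the invariant form of $\vartheta'$ there; its feasible region only shrinks relative to the threshold graph with $t\le\tau$. Hence Theorem~\ref{thm:generalized-moving-subspace-bound} gives
\[
 \operatorname{LP}_n(\lceil\delta n\rceil)\le\frac{1-\tau}{\Lambda_n-\tau}\frac{D_{\rho_n}}{r_{\rho_n}},
\]
and taking $\frac1n\log_2$ yields $R_D(\delta)\le\Phi(K,L)$. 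The main obstacle is the commutation of $\mathcal R_n$ with $\widehat q_n$, i.e.\ that $\mathcal R_n$ preserves the fiber typical projection. If exact commutation fails, I would instead test with $A_n=p_n\mathcal R_np_n$ and bound the cross terms by the same Cauchy--Schwarz using $\Tr(\mathcal A_n(I-p_n))=o(1)$ from \eqref{eq:joint-subspace-mass}. This needs only that $\|\mathcal R_n-p_n\mathcal R_np_n\|_2\to0$, which follows from $\|(I-p_n)\mathcal R_n\|_2=o(1)$ and its adjoint.
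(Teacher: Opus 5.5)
Your proof is correct and is essentially the paper's argument, but only through your fallback. The commutation $[\mathcal R_n,\widehat q_n]=0$ that you ``grant'' is false in general. So testing with $p_n\mathcal R_np_n$ and using $\|\mathcal R_n-p_n\mathcal R_np_n\|_2\le2\,\Tr(\mathcal A_n(I-p_n))^{1/2}$, exactly as the paper does, is necessary rather than optional.

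To see why commutation fails: in the Fourier model $\widehat q_n=I\otimes q_n$ on $(\CC^2)^{\otimes n}\otimes(\CC^m)^{\otimes n}$. On the $C_2^n$-sector indexed by $s\in\{0,1\}^n$, however, $\mathcal R_n$ is the mixed tensor $M_{s_1}^{1/2}\otimes\cdots\otimes M_{s_n}^{1/2}$ with $M_0=K$, $M_1=L$. Schur--Weyl duality makes the $\mathfrak S_n$-isotypic projections commute with permutation-invariant operators such as $X^{\otimes n}$, but not with mixed tensors. Already for $n=2$, the projection $(I+\mathrm{SWAP})/2$ commutes with $K^{1/2}\otimes L^{1/2}$ only when $K$ and $L$ are proportional.

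One minor repair: the Hilbert--Schmidt contractivity behind your Cauchy--Schwarz step comes from $\mathsf S_{t_n}$ being an average of unitary conjugations by the $\sigma_i$, hence unital and trace-preserving. Unital complete positivity alone does not give it. Everything else is fine, including your use of a fixed $\tau\in(1-2\delta,\Gamma(K,L))$ with monotonicity of $\vartheta'$ in place of the paper's $\tau_n=1-2\lceil\delta n\rceil/n$.
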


\begin{proof}
Proposition~\ref{prop:joint-Schur-Weyl-concentration} gives
$\eta_n=\Tr(\mathcal A_n(I-p_n))=o(1)$.  Put
$\mathcal R'_n=p_n\mathcal R_np_n$.  Since
\[
 \mathcal R_n-\mathcal R'_n
 =(I-p_n)\mathcal R_n+p_n\mathcal R_n(I-p_n)
\]
and left multiplication by $p_n$ is a contraction in Hilbert--Schmidt norm,
\[
 \|(I-p_n)\mathcal R_n\|_2^2
 =\|\mathcal R_n(I-p_n)\|_2^2
 =\Tr(\mathcal A_n(I-p_n))=\eta_n.
\]
Thus $\|\mathcal R_n-\mathcal R'_n\|_2\leq2\sqrt{\eta_n}=o(1)$.
Because $\mathcal R'_n=p_n\mathcal R'_np_n$, the definition of
$\Theta_n$ gives
\[
 \Tr\bigl(\mathcal R'_n\Theta_n(\mathcal R'_n)\bigr)
 =
 \Tr\bigl(\mathcal R'_n\mathsf S_{t_n}(\mathcal R'_n)\bigr).
\]
Since $\mathsf S_{t_n}$ is an average of conjugations by unitaries $\sigma_i$ \eqref{eq:Hamming-multiplication-map}, it is a contraction for the Hilbert--Schmidt norm.  As $\|\mathcal R_n\|_2=1$, Cauchy--Schwarz gives
\begin{align*}
 \left|\Tr((\mathcal R'_n)^2)-1\right|
 &\leq
 \|\mathcal R'_n-\mathcal R_n\|_2
 \bigl(\|\mathcal R'_n\|_2+\|\mathcal R_n\|_2\bigr)=o(1),\\
 \left|
 \Tr\bigl(\mathcal R'_n\Theta_n(\mathcal R'_n)\bigr)
 -
 \Tr\bigl(\mathcal R_n\mathsf S_{t_n}(\mathcal R_n)\bigr)
 \right|
 &\leq
 \|\mathcal R'_n-\mathcal R_n\|_2
 \bigl(\|\mathcal R'_n\|_2+\|\mathcal R_n\|_2\bigr)=o(1).
\end{align*}
Equation~\eqref{eq:Hamming-exact-overlap} therefore yields
\begin{equation}
 \frac{\Tr(\mathcal R'_n\Theta_n(\mathcal R'_n))}
      {\Tr((\mathcal R'_n)^2)}
 =\Gamma(K,L)+o(1),
 \label{eq:typical-survival}
\end{equation}
where $\Theta_n$ is the map induced by $\mathsf S_{t_n}$ on the isotypic
submodule~\eqref{eq:joint-typical-subspace}.

The map $\Theta_n$ is completely positive, so it preserves the cone of
positive semidefinite operators, and it is self-adjoint for the
Hilbert--Schmidt inner product.  Perron--Frobenius theory for positive maps
therefore gives a nonzero $\rho_n\succeq0$ such that
\[
 \Theta_n(\rho_n)=\Lambda_n\rho_n,
 \qquad
 \Lambda_n=r(\Theta_n).
\]
In particular, $\Lambda_n$ is a nonnegative eigenvalue.  Since $\Theta_n$ is
self-adjoint, all its eigenvalues are real; as each has absolute value at most
$r(\Theta_n)$, it follows that $\Lambda_n$ is its largest eigenvalue.
Consequently, Equation~\eqref{eq:typical-survival} and the variational
characterization of the largest eigenvalue give, for some
$\epsilon_n\downarrow0$,
\begin{equation}
 \Theta_n(\rho_n)=\Lambda_n\rho_n,
 \qquad
 \Lambda_n\geq\Gamma(K,L)-\epsilon_n.
 \label{eq:typical-Perron-survival}
\end{equation}

Since $\rho_n=p_n\rho_np_n$, Proposition~\ref{prop:joint-Schur-Weyl-concentration} gives \eqref{eq:asymptotic-Hecke-cost}.  Hence
$(E_n,\Omega_n,\rho_n,\Lambda_n)$ is an asymptotic realization
of $(K,L)$.

Finally set $d_n=\lceil\delta n\rceil$ and
$\tau_n=1-2d_n/n$.  Under~\eqref{eq:strict-channel-threshold},
$\Lambda_n-\tau_n$ is bounded below by a positive constant.
Theorem~\ref{thm:generalized-moving-subspace-bound} supplies an ordinary radial
Krawtchouk dual certificate with objective at most a constant multiple of
$D_{\rho_n}/r_{\rho_n}$.  Passing to the rate proves
\eqref{eq:channel-bound}.
\end{proof}

\section{The half-disk uncertainty principle}
\label{sec:half-disk}

Much of the recent progress on the Cohn--Elkies problem has originated from
the relationship to the $(-1)$-eigenfunction uncertainty principle, which
arises almost immediately from the formulation of Cohn--Elkies but whose true importance was only realized much later
\cite{CohnElkies2003,CohnGoncalves2019,OpenAI2026}.  Use the self-dual Fourier normalization
with kernel $e^{-2\pi\iu\langle x,y\rangle}$ and, after radialization, let
$f$ be a feasible solution to the Cohn--Elkies problem.
Rescale $F(r)=f(\alpha r)$ so that $F(0)=\widehat F(0)$.  With this
normalization, the resulting Cohn--Elkies upper bound on the density of
centers is $1$; let $R$ be the corresponding minimum distance between sphere
centers.  Then $g=\widehat F-F$ is a $(-1)$-eigenfunction of the Fourier
transform and $g(r)\geq0$ for $r\geq R$.  Conversely, beginning with such an
eigenfunction, we may try to construct a $(+1)$-eigenfunction with certain
properties and thus recover the original $F$.  This eigenfunction approach
underlies both numerical computation via Hermite polynomials and
exact solutions, such as the magic functions in dimensions $8$ and $24$
\cite{Viazovska2017,CohnKumarMillerRadchenkoViazovska2017}.

The Delsarte program has no analogue of the rescaling trick used to connect the
linear program with an eigenfunction uncertainty principle.  The philosophical motivation for us is the observation that at rate $1/2$,
feasible dual solutions of Delsarte's linear program give Krawtchouk eigenfunctions.
One informal slogan we could give is the Delsarte linear program at rate $1/2$ behaves
as though it is dual to weight enumerators for self-dual codes,
just as the Cohn-Elkies linear program behaves as though it is dual to the theta
functions of unimodular lattices.
With Krawtchouk eigenfunctions, we can formulate a mass-concentration statement for high-dimensional Krawtchouk eigenfunctions.  Proving the concentration theorem derives the lower
bound for both Krawtchouk sign uncertainty problems.  In particular, the
case of the $(-1)$-uncertainty problem gives a lower bound for the Delsarte
bound as well.  Finally, Theorem~\ref{thm:realization} turns the same lower
bound into the matrix inequality in Corollary~\ref{cor:half-rate-matrix-inequality}.

\subsection{The analytic core}

Set $w_i=\binom ni$ and define
\begin{equation}
 (\cK_nu)_k=2^{-n/2}\sum_{i=0}^nK_i^{(n)}(k)u_i.
 \label{eq:Krawtchouk-transform}
\end{equation}
Krawtchouk orthogonality makes $\cK_n$ an orthogonal involution on
$\RR^{n+1}$ for the weighted inner product
$\langle u,v\rangle_w=\sum_iw_iu_iv_i$.  Let $\cE_n^\pm$ be its eigenspaces
with eigenvalues $\pm1$.

\begin{definition}
For a radial vector $u=(u_0,\ldots,u_n)$, let
\begin{equation}
 r_n(u)=\min\{r\in\{0,\ldots,n\}:u_i\geq0
 \text{ for every }i\geq r\},
 \label{eq:Krawtchouk-last-sign-radius}
\end{equation}
with value $+\infty$ if the set is empty.  For
$\varsigma\in\{-1,+1\}$, define
\begin{equation}
 A^{\mathrm K}_{\varsigma}(n)
 =\inf\{r_n(u):0\ne u\in\cE_n^\varsigma,\ u_0=0\},
 \label{eq:Krawtchouk-sign-constant}
\end{equation}
again with value $+\infty$ if there is no such vector with finite last-sign
radius.
\end{definition}

Put
\begin{equation}
 G_u(z)=\sum_{i=0}^nw_iu_iz^i,
 \qquad \omega(z)=\frac{1-z}{1+z}.
 \label{eq:weighted-generating-polynomial}
\end{equation}

\begin{lemma}[Krawtchouk transform functional equation]
\label{lem:Krawtchouk-functional-equation}
For every radial vector $u$,
\begin{equation}
 G_{\cK_nu}(z)=2^{-n/2}(1+z)^nG_u(\omega(z)).
 \label{eq:Krawtchouk-functional-equation}
\end{equation}
Thus, if $g\in\cE_n^\varepsilon$ with $\varepsilon\in\{-1,1\}$, then
\begin{equation}
 G_g(z)=\varepsilon2^{-n/2}(1+z)^nG_g(\omega(z)).
 \label{eq:Krawtchouk-eigenfunction-functional-equation}
\end{equation}
\end{lemma}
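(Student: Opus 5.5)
Direct computation from the definitions: expand $G_{\cK_nu}$ coefficientwise and swap the order of summation. By \eqref{eq:Krawtchouk-transform},
\[
 G_{\cK_nu}(z)=2^{-n/2}\sum_{k=0}^n\binom nk z^k\sum_{i=0}^nK_i^{(n)}(k)u_i
 =2^{-n/2}\sum_{i=0}^n u_i\sum_{k=0}^n\binom nk K_i^{(n)}(k)z^k .
\]
It therefore suffices to evaluate the inner sum $\sum_k\binom nk K_i^{(n)}(k)z^k$ for each fixed $i$.

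The key input is the symmetry relation $\binom nk K_i^{(n)}(k)=\binom ni K_k^{(n)}(i)$. I would prove it from \eqref{eq:Krawtchouk-generating-function} via the combinatorial description: for $x$ of weight $k$,
\[
 K_i^{(n)}(k)=\sum_{|y|=i}(-1)^{\langle x,y\rangle}.
\]
Summing over all $x$ of weight $k$ gives $\binom nk K_i^{(n)}(k)=\sum_{|x|=k,|y|=i}(-1)^{\langle x,y\rangle}$, which is symmetric in $(i,k)$.

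With this symmetry, the inner sum becomes
\[
 \binom ni\sum_k K_k^{(n)}(i)z^k=w_i(1+z)^{n-i}(1-z)^i
\]
by \eqref{eq:Krawtchouk-generating-function} with the roles of index and variable exchanged. Factor out $(1+z)^n$ to get $w_i(1+z)^n\omega(z)^i$. Summing over $i$ yields $2^{-n/2}(1+z)^nG_u(\omega(z))$, which is \eqref{eq:Krawtchouk-functional-equation}.

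For the eigenfunction statement, substitute $\cK_ng=\varepsilon g$: the left side of \eqref{eq:Krawtchouk-functional-equation} becomes $\varepsilon G_g(z)$, and since $\varepsilon^2=1$ this gives \eqref{eq:Krawtchouk-eigenfunction-functional-equation}. The identity holds as polynomials in $z$, since $(1+z)^n\omega(z)^i$ is a polynomial for $i\le n$, so the pole at $z=-1$ is not an issue.

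The only nontrivial step is the symmetry relation. If one prefers to avoid the character-sum description, an alternative is a two-variable generating function check: compute $\sum_{k}\binom nk y^k(1+z)^{n-k}(1-z)^k=(1+z+y(1-z))^n$. This expression is symmetric under $y\leftrightarrow z$, which gives the symmetry directly by comparing coefficients of $y^kz^i$.
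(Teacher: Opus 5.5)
Your proposal is correct and matches the paper's proof: expand $G_{\cK_nu}$ coefficientwise, interchange the two finite sums, apply the weighted self-duality $w_kK_i^{(n)}(k)=w_iK_k^{(n)}(i)$, and finish with the generating function \eqref{eq:Krawtchouk-generating-function}. The only difference is that you prove the self-duality yourself, via the character sum or the $y\leftrightarrow z$ symmetry of $(1+z+y(1-z))^n$, whereas the paper cites it from the NIST handbook.
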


\begin{proof}
The standard generating function and the hypergeometric formula underlying
its self-duality are recorded in
\cite[Eqs.~(18.23.3) and (18.20.6)]{NISTHandbook2010}.  In our normalization,
use Equation~\eqref{eq:Krawtchouk-generating-function} and the weighted
Krawtchouk self-duality relation
$w_iK_k^{(n)}(i)=w_kK_i^{(n)}(k)$, then interchange the two finite sums.
\end{proof}

Normalize a nonzero eigenfunction by
$\sum_iw_i|g_i|=1$ and write $P=G_g$.  On the unit circle $|P|\leq1$.
For $-1\leq y\leq1$, the point $\omega(iy)$ lies on that circle, so the
functional equation gives
\begin{equation}
 |P(iy)|\leq \exp\bigl(n\varphi(y)\bigr),
 \qquad
 \varphi(y)=\frac12\log\frac{1+y^2}{2}.
 \label{eq:diameter-bound}
\end{equation}
The polynomial is therefore controlled on the two pieces of the boundary of
each half-disk.  The corresponding Dirichlet problem then yields the constant
$\delta_\pi$.

The conformal map
\begin{equation}
 W(z)=\left(\frac{z-\iu}{z+\iu}\right)^2
 \label{eq:half-disk-map}
\end{equation}
sends the right half-disk to the upper half-plane, the semicircular arc to
the negative real axis, and the imaginary diameter to the positive real
axis.  Put
\[
 \psi(x)=\frac12\log(1+x)-\log(1+\sqrt x).
\]
For $|z|<1$ and $\operatorname{Re}z>0$, define
\begin{equation}
 \mathcal B(z)=\operatorname{Re}\left[
 -\frac{\iu}{\pi}\int_0^\infty\frac{\psi(x)}{x-W(z)}\,dx
 \right],
 \label{eq:holomorphic-half-disk-extension}
\end{equation}
and define $\mathcal B$ on the left half-disk by
$\mathcal B(-z)=\mathcal B(z)$.  This is the unique continuous function on
the closed disk that is harmonic in each open half-disk, equals zero on the
unit circle, and equals $\varphi(y)$ at $z=\iu y$.

Define $D:(0,\infty)\to\mathbb R$ by
\begin{equation}
 D(a)=
 \begin{cases}
 \displaystyle\frac12-\frac{2a\log a}{\pi(a^2-1)},&a\ne1,\\[6pt]
 \displaystyle\frac12-\frac1\pi=\delta_\pi,&a=1.
 \end{cases}
 \label{eq:D-a}
\end{equation}

\begin{lemma}[Half-disk Dirichlet-to-Neumann profile]
\label{lem:half-disk}
There is a continuous function $q$ on the unit circle such that, uniformly
in $\theta$,
\begin{equation}
 \frac{\mathcal B(re^{\iu\theta})}{\log r}
 \longrightarrow q(e^{\iu\theta})
 \qquad(r\uparrow1).
 \label{eq:normal-profile}
\end{equation}
Because $\log r\sim r-1$, the function $q$ is the outward normal derivative
of $\mathcal B$ along the circular boundary.
On the open right semicircle, write
\begin{equation}
 a=\tan\!\left(\frac\pi4-\frac\theta2\right),
 \qquad -\frac\pi2<\theta<\frac\pi2.
\end{equation}
Then
\begin{equation}
 q(e^{\iu\theta})=D(a).
\end{equation}
The function $D$ has its
unique minimum at $a=1$, with $D(1)=\delta_\pi$.  At the two corners
$q(\pm\iu)=1/2$, and $q(-z)=q(z)$.  Consequently,
\begin{equation}
 \min_{|z|=1}q(z)=\delta_\pi,
 \label{eq:normal-lower}
\end{equation}
with equality exactly at $z=\pm1$.
\end{lemma}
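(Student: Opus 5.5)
The plan is to transplant everything to the upper half-plane through the conformal map $W$ of \eqref{eq:half-disk-map}. There the problem becomes a Poisson-integral computation on the negative real axis, followed by a chain-rule factor.

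\textbf{Step 1: identify $\mathcal B$ as a pulled-back Poisson integral.} For $-1<y<1$,
\[
 W(\iu y)=\Bigl(\frac{1-y}{1+y}\Bigr)^2,
 \qquad\text{so}\qquad
 \sqrt{W(\iu y)}=\frac{1-y}{1+y}.
\]
Substituting gives $\psi(W(\iu y))=\tfrac12\log\frac{1+y^2}{2}=\varphi(y)$. Next, for $w$ in the upper half-plane,
\[
 \operatorname{Re}\Bigl[-\frac{\iu}{\pi}\int_0^\infty\frac{\psi(x)}{x-w}\,dx\Bigr]
 =\frac1\pi\int_0^\infty\frac{\psi(x)\operatorname{Im}w}{|x-w|^2}\,dx .
\]
This is the Poisson extension $u$ of the boundary data equal to $\psi$ on $(0,\infty)$ and $0$ on $(-\infty,0)$. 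Since $\psi$ is bounded and continuous, with $\psi(0)=0$ and $\psi(x)\to-\tfrac12\log 2$ slowly as $x\to\infty$, the extension is well defined and continuous up to the boundary. It follows that $\mathcal B=u\circ W$ on the right half-disk, and the reflection $\mathcal B(-z)=\mathcal B(z)$ gives $q(-z)=q(z)$ for free.

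\textbf{Step 2: compute the normal derivative and the chain-rule factor.} At a point $-s$ of the negative axis ($s>0$) the boundary value is $0$. Differentiating the Poisson kernel in $\operatorname{Im}w$ gives
\[
 \partial_{\operatorname{Im}w}u(-s)=\frac1\pi\int_0^\infty\frac{\psi(x)}{(x+s)^2}\,dx ,
\]
which is negative because $\psi\le0$. For $z=e^{\iu\theta}$ one computes $(z-\iu)/(z+\iu)=-\iu a$ with $a=\tan(\pi/4-\theta/2)$, so $W(z)=-a^2$. Moreover
\[
 |W'(z)|=\frac{4a}{|z+\iu|^2}=\frac{2a}{1+\sin\theta}.
\]
Near the arc, $W$ maps the radial direction to the vertical one. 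Hence $\mathcal B(re^{\iu\theta})\approx(1-r)\,|W'|\,\partial_{\operatorname{Im}w}u(-a^2)$, with a sign flip because the inward radial direction goes to the upward direction. Dividing by $\log r\sim r-1$ gives
\[
 q=-|W'(z)|\,\partial_{\operatorname{Im}w}u(-a^2)\ge0 .
\]
Expressed in $a$, one has $1+\sin\theta=2/(1+a^2)$, so $|W'(z)|=a(1+a^2)$, and the target identity reads
\[
 -\frac{a(1+a^2)}{\pi}\int_0^\infty\frac{\psi(x)}{(x+a^2)^2}\,dx=D(a).
\]
I would evaluate this integral as follows. Substitute $x=t^2$, split $\psi$ into $\tfrac12\log(1+t^2)$ and $-\log(1+t)$, and integrate by parts to remove the logarithms. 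What remains is a rational integral over $(0,\infty)$, handled by partial fractions. The $\log(1+t)$ piece produces the $\frac{a\log a}{a^2-1}$ term, and the $\log(1+t^2)$ piece the constant $\tfrac12$. The removable singularity at $a=1$ gives $D(1)=\tfrac12-\tfrac1\pi$ by continuity.

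\textbf{Step 3: uniformity, including the corners.} Uniform convergence in \eqref{eq:normal-profile} on compact subsets of the open arc follows from smoothness of $u$ up to $(-\infty,0)$ and real-analyticity of $W$ there. I expect the corners $\pm\iu$ to be the main obstacle, since there $W\to0$ or $W\to\infty$ and the boundary data jump between the two sides. I would handle them with local comparison. Near $z=\iu$, $W$ behaves like a quadratic of $z-\iu$, and $\psi(x)\sim-\sqrt x$ as $x\to0$. Near $z=-\iu$, use the behaviour of $\psi$ at infinity, namely $\psi(x)=-\tfrac12\log 2+O(x^{-1/2})$. In each case the local angle is $\pi/2$ and the Dirichlet data are explicit to first order, so I would write $\mathcal B$ near the corner as an explicit harmonic function plus a $C^1$ error. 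For example, near $\iu$ take $\operatorname{Re}$ of a multiple of $-\iu(z-\iu)$, whose data vanish on the arc to first order. This should show that the quotient extends continuously with value $\tfrac12$, which is consistent with $D(a)\to\tfrac12$ as $a\to0$ or $a\to\infty$.

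\textbf{Step 4: minimization.} The function $D$ is invariant under $a\mapsto1/a$. The inequality $D(a)>D(1)$ for $a\ne1$ is equivalent to $\frac{2a\log a}{a^2-1}<1$, that is, $\log a<\tfrac12(a-a^{-1})$ for $a>1$. This is standard: the difference vanishes at $a=1$ and has derivative $\tfrac12(1-a^{-1})^2>0$. So the unique minimum is at $a=1$, which corresponds to $\theta=0$, i.e.\ $z=1$. By the reflection, the minimum over the whole circle is $\delta_\pi$, attained exactly at $z=\pm1$.
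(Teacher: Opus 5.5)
Your route is the paper's: the pulled-back Poisson formula, the factor $a(1+a^2)=|W'(e^{\iu\theta})|$, integration by parts with $x=t^2$, subtraction of $-t/2$ at $\iu$, and $\log a<\frac12(a-a^{-1})$ (the paper's $x/\sinh x<1$ with $a=e^x$). The real problem is your description of $\psi$ at infinity, and your treatment of the corner $-\iu$ rests on it.

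Since $\psi(x)=\frac12\log\frac{1+x}{(1+\sqrt x)^2}$, one has $\psi(1/x)=\psi(x)$. Hence $\psi(x)\to0$, with $\psi(x)=-x^{-1/2}+O(x^{-1})$. The value $-\frac12\log2=\psi(1)=\varphi(0)$ is the minimum of $\psi$, not its limit. Consistently, $\varphi(\pm1)=0$, so the Dirichlet data are continuous at both corners and nothing jumps at either one.

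If the data really jumped from $-\frac12\log2$ to $0$ at $-\iu$, several things would break. $\mathcal B$ would be discontinuous there, behaving like a multiple of the corner angle. $\mathcal B(re^{\iu\theta})/\log r$ would be unbounded as $(r,\theta)\to(1,-\pi/2)$. The complex integral defining $\mathcal B$ would diverge logarithmically. Your local comparison at $-\iu$ could then not produce $q(-\iu)=\frac12$.

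The repair is the symmetry behind the paper's remark ``using $1/W$ and $t=1+\operatorname{Im}z$''. Because $\varphi$ is even and $W(\bar z)=1/\overline{W(z)}$, uniqueness gives $\mathcal B(\bar z)=\mathcal B(z)$. So the corner at $-\iu$ is the mirror image of the corner at $\iu$.

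Two smaller points. First, your bookkeeping in the rational integral is off. The constant $\frac12$ comes from the $\log(1+t)$ piece, through $\int_0^\infty dt/(t^2+a^2)=\pi/(2a)$. The logarithmic term needs both pieces: $\frac{\log a}{a^2-1}$ from $\frac12\log(1+t^2)$ and $-\frac{\log a}{a^2+1}$ from $-\log(1+t)$, which combine to $\frac{2\log a}{(a^2-1)(a^2+1)}$.

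Second, at $\iu$ the claim that $\mathcal B+t/2$ is a ``$C^1$ error'' is the whole content of the corner step. It is true, and since the remainder vanishes to second order along two perpendicular boundary arcs, its gradient must vanish at $\iu$, which is what yields $\frac12$. But it needs proof. The paper transports $U=\mathcal B+t/2$ through $W$, where its trace is piecewise $C^1$ and $O(|x|)$. It then bounds the Poisson-integral gradient by $1+|\log|w||$, and the chain rule gives $|\nabla U(z)|\ll|z-\iu|(1+|\log|z-\iu||)=o(1)$. Radial integration then yields the limit $\frac12$ uniformly as $(r,\theta)\to(1,\pi/2)$.
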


\begin{proof}
Taking real parts in
Equation~\eqref{eq:holomorphic-half-disk-extension} gives the Poisson formula
\begin{equation}
 \mathcal B(z)=\frac1\pi\int_0^\infty
 \frac{\operatorname{Im}W(z)\,\psi(x)}{|x-W(z)|^2}\,dx.
 \label{eq:Poisson}
\end{equation}
On the circular arc, $W$ has negative real boundary values, so the boundary
value in Equation~\eqref{eq:Poisson} is zero.  On the imaginary diameter,
\[
 W(\iu y)=\left(\frac{1-y}{1+y}\right)^2,
 \qquad
 \psi(W(\iu y))=\varphi(y),
\]
which verifies the stated Dirichlet data.
For $z=re^{\iu\theta}$,
\[
 W(e^{\iu\theta})=-a^2,
 \qquad
 \frac{\operatorname{Im}W(re^{\iu\theta})}{1-r}
 \longrightarrow a(1+a^2).
\]
Integration by parts followed by $x=t^2$ gives
\begin{align}
 \int_0^\infty\frac{\psi(x)}{(x+a^2)^2}\,dx
 &=\int_0^\infty
 \frac{t-1}{(t+1)(t^2+1)(t^2+a^2)}\,dt \notag\\
 &=\frac{2\log a}{(a^2-1)(a^2+1)}
   -\frac\pi{2a(a^2+1)}.
 \label{eq:rational-integral}
\end{align}
Since $\log r\sim-(1-r)$, Equations~\eqref{eq:Poisson}--
\eqref{eq:rational-integral} give
\[
 \lim_{r\uparrow1}\frac{\mathcal B(re^{\iu\theta})}{\log r}
 =-\frac{a(1+a^2)}\pi
   \int_0^\infty\frac{\psi(x)}{(x+a^2)^2}\,dx
 =D(a).
\]
Writing $a=e^x$ gives
\begin{equation}
 D(e^x)=\frac12-\frac1\pi\frac{x}{\sinh x}
 =\delta_\pi+\frac{x^2}{6\pi}+O(x^4).
 \label{eq:logistic-well}
\end{equation}
Since $x/\sinh x\leq1$, with equality only at $x=0$, the function $D$
has its unique minimum at $a=1$, with value $\delta_\pi$.

The convergence in \eqref{eq:normal-profile} is uniform away from the two corners.
The remainder of the proof will handle these two corners.
Consider the upper corner $\iu$ and use the local coordinates
$s=\operatorname{Re}z$ and $t=1-\operatorname{Im}z$.  On the diameter,
\[
 \varphi(1-t)=-t/2+O(t^2),
\]
whereas on the circular arc
$t=s^2/2+O(s^4)$.  Hence the harmonic remainder
\[
 U(z)=\mathcal B(z)+t/2
\]
has boundary values $O(|z-\iu|^2)$ on each of the two incident arcs.
Moreover,
\[
 |W(z)|\asymp |z-\iu|^2,
 \qquad
 |W'(z)|\ll |z-\iu|.
\]
Thus the boundary trace of
$\widetilde U=U\circ W^{-1}$ is piecewise continuously differentiable near
the origin and is $O(|x|)$ on both sides of the real axis.  Differentiating
its upper-half-plane Poisson integral and splitting the integral into
$|x|\leq2|w|$, $2|w|<|x|<1$, and $|x|\geq1$ gives
\[
 |\nabla\widetilde U(w)|
 \ll 1+|\log|w||.
\]
The chain rule therefore yields
\[
 |\nabla U(z)|
 \ll |z-\iu|\bigl(1+|\log|z-\iu||\bigr)=o(1)
\]
as $z\to\iu$ within the half-disk.  Since
$\mathcal B(e^{\iu\theta})=0$, integration along radial segments gives
\[
 \mathcal B(re^{\iu\theta})
 =-\int_r^1\left(\partial_\rho U(\rho e^{\iu\theta})
 +\frac{\sin\theta}{2}\right)\,d\rho.
\]
It follows uniformly as $(r,\theta)\to(1,\pi/2)$ that
\[
 \frac{\mathcal B(re^{\iu\theta})}{\log r}
 =\frac{1-r}{-\log r}\left(\frac{\sin\theta}{2}+o(1)\right)
 \longrightarrow\frac12.
\]
The same argument at $-\iu$, using $1/W$ and
$t=1+\operatorname{Im}z$, gives the other corner.
Together with the convergence away from the corners, this proves the uniform
convergence in Equation~\eqref{eq:normal-profile}.  Reflection handles the
left half-disk and gives $q(-z)=q(z)$.
\end{proof}

\begin{theorem}[Krawtchouk mass concentration]
\label{thm:mass}
For every fixed $c<\delta_\pi$, there are $C_c,\gamma_c>0$ such that every
$g\in\cE_n^+\cup\cE_n^-$ satisfies
\begin{equation}
 \sum_{0\leq i<\lceil cn\rceil}w_i|g_i|
 \leq C_ce^{-\gamma_cn}\sum_{i=0}^nw_i|g_i|.
 \label{eq:mass}
\end{equation}
\end{theorem}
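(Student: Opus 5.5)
The approach is to normalize $\sum_iw_i|g_i|=1$, set $P=G_g$, and bound the coefficients $w_ig_i$ for small $i$ by Cauchy's formula on a circle of radius $r<1$. This gives $w_i|g_i|\le r^{-i}\max_{|z|=r}|P(z)|$. The task therefore reduces to showing $\max_{|z|=r}|P(z)|\le \exp(n(\mathcal B\text{-type bound}))$ and then optimizing $r$.

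\textbf{Step 1: subharmonic comparison.} On the closed unit disk, $u=\frac1n\log|P|$ is subharmonic. On the unit circle $|P|\le\sum_iw_i|g_i|=1$, so $u\le0$ there. On the imaginary diameter, \eqref{eq:diameter-bound} gives $u(\iu y)\le\varphi(y)$. The boundary of each half-disk consists of an arc and the diameter, and $\mathcal B$ is harmonic with exactly these boundary values. The maximum principle (Phragmén--Lindelöf is unnecessary here, since $P$ is a polynomial and $u$ is upper semicontinuous and bounded above) then gives $u\le\mathcal B$ in each half-disk. Hence $|P(z)|\le e^{n\mathcal B(z)}$ on the whole disk.

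\textbf{Step 2: Cauchy estimate.} By Lemma~\ref{lem:half-disk}, $\mathcal B(re^{\iu\theta})/\log r\to q(e^{\iu\theta})\ge\delta_\pi$ uniformly. Fix $c<c'<\delta_\pi$. For $r$ close enough to $1$, we get $\mathcal B(re^{\iu\theta})\le c'\log r$ for all $\theta$; note $\log r<0$, so the inequality direction is right, because $\mathcal B/\log r\ge c'$ means $\mathcal B\le c'\log r$. Therefore $\max_{|z|=r}|P|\le r^{c'n}$. For $i<\lceil cn\rceil$, which gives $i\le cn+1$, Cauchy yields
\[
w_i|g_i|\le r^{c'n-i}\le r^{-1}r^{(c'-c)n}.
\]
Summing over at most $n+1$ indices gives a bound $(n+1)r^{-1}e^{-(c'-c)|\log r|n}$. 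Absorbing the polynomial factor into a smaller exponent produces $C_c e^{-\gamma_c n}$, with, say, $\gamma_c=\tfrac12(c'-c)|\log r|$. Since the left side is also at most $1$, small $n$ is covered by enlarging $C_c$.

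\textbf{Main obstacle.} The delicate point is Step 1: justifying the comparison $\log|P|\le n\mathcal B$ on each half-disk. This requires care at the corners $\pm\iu$, where the Dirichlet data meet and $\mathcal B$ is only continuous. It also requires care near possible zeros of $P$, where $\log|P|=-\infty$, which is harmless for the upper bound. Continuity of $\mathcal B$ on the closed disk, stated after \eqref{eq:holomorphic-half-disk-extension}, together with upper semicontinuity of $\log|P|$ makes the standard maximum principle for subharmonic minus harmonic apply: $\limsup(u-\mathcal B)\le0$ at every boundary point.

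The second delicate point is the uniformity in Lemma~\ref{lem:half-disk} including the corners. That uniformity is exactly what lets a single radius $r$ work for all $\theta$. The lemma supplies it, and the corner values $1/2>\delta_\pi$ do not spoil the minimum.
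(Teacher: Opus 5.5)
Your proposal is correct and is essentially the paper's proof. Both use the same three steps: the comparison $\log|P|\le n\mathcal B$ on each half-disk via the maximum principle; a single radius $r<1$ chosen from the uniform profile of Lemma~\ref{lem:half-disk} so that $\mathcal B(re^{\iu\theta})$ lies below a constant times $\log r$; and Cauchy estimates summed over at most $n+1$ coefficients. The paper's $c+\eta$ plays the role of your $c'$, and it uses $i<\lceil cn\rceil\Rightarrow i<cn$ directly, which avoids your harmless extra factor $r^{-1}$.
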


\begin{proof}
Normalize the total weighted mass to one and put $P=G_g$.  The function
$\log|P|-n\mathcal B$ is subharmonic in either half-disk and nonpositive on both
boundary pieces by $|P|\leq1$ and Equation~\eqref{eq:diameter-bound}.
The maximum principle gives $\log|P(z)|\leq n\mathcal B(z)$ for $|z|<1$.

Choose $\eta>0$ with $c+2\eta<\delta_\pi$.  Lemma~\ref{lem:half-disk}
supplies a fixed $r<1$, close enough to one, for which
\[
 \mathcal B(re^{\iu\theta})\leq(c+\eta)\log r
\]
uniformly in $\theta$.  If $P(z)=\sum_ip_iz^i$, Cauchy's formula gives
$|p_i|\leq r^{\eta n}$ for $i<\lceil cn\rceil$.  Since $p_i=w_ig_i$, the
sum in Equation~\eqref{eq:mass} is at most $(n+1)r^{\eta n}$.  Because
$r^\eta<1$, this is bounded by $C_ce^{-\gamma_cn}$ for some constants
$C_c,\gamma_c>0$ depending only on $c$: the polynomial factor $n+1$ is
absorbed by taking a slightly weaker exponential rate.
\end{proof}

\subsection{The Krawtchouk sign uncertainty principle}

The mass theorem gives the lower bound for the two
sign-uncertainty problems, and it is the Hamming analogue of the ``mass-concentration'' mechanism used in \cite[Chapter~1]{OpenAI2026}.
The general strategy of noting that such mass-concentration statements are sufficient is as old as the sign uncertainty principle itself, being used in \cite{BourgainClozelKahane2010,GoncalvesOliveiraeSilvaSteinerberger2017}.
However, the pointwise estimates used in those papers were only able to recover a weaker asymptotic bound.

\begin{proposition}
\label{prop:Krawtchouk-sign-lower}
For each $\varsigma\in\{-1,+1\}$,
\begin{equation}
 \liminf_{n\to\infty}\frac{A^{\mathrm K}_{\varsigma}(n)}n
 \geq\delta_\pi.
 \label{eq:Krawtchouk-sign-lower}
\end{equation}
\end{proposition}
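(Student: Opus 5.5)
The plan is to combine Theorem~\ref{thm:mass} with one linear identity satisfied by every admissible eigenvector. The identity will force at least half of the weighted mass of such a vector to lie in the layers where it is negative. Those layers all sit below the last-sign radius, so if that radius were small the mass theorem would make that half exponentially small.

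\emph{Step 1: the identity.} Fix $\varsigma\in\{-1,+1\}$ and a nonzero $u\in\cE_n^\varsigma$ with $u_0=0$ and $r_n(u)=r<\infty$. Put $z=0$ in Equation~\eqref{eq:Krawtchouk-generating-function}. This gives $\sum_j K_j^{(n)}(0)z^j=(1+z)^n$, so $K_i^{(n)}(0)=\binom ni=w_i$. Using Equation~\eqref{eq:Krawtchouk-transform},
\[
 \varsigma u_0=(\cK_nu)_0=2^{-n/2}\sum_{i=0}^n w_iu_i .
\]
Equivalently, this is Lemma~\ref{lem:Krawtchouk-functional-equation} evaluated at $z=0$, since $G_u(1)=\sum_i w_iu_i$. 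Because $u_0=0$, we get $\sum_i w_iu_i=0$.

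\emph{Step 2: the negative part carries half the mass.} Write $u=u^+-u^-$ with $u^\pm\geq0$. The identity in Step~1 says $\sum_i w_iu_i^+=\sum_i w_iu_i^-$, so
\[
 \sum_i w_iu_i^-=\tfrac12\sum_i w_i|u_i|.
\]
The right side is positive because $u\ne0$ and every $w_i>0$. By the definition \eqref{eq:Krawtchouk-last-sign-radius} of $r_n(u)$, the vector $u^-$ is supported on indices $i<r$. Therefore
\[
 \tfrac12\sum_{i}w_i|u_i|\;\leq\;\sum_{0\le i<r}w_i|u_i| .
\]

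\emph{Step 3: contradiction for small radius.} Suppose the conclusion fails. Then there are $c<\delta_\pi$ and infinitely many $n$ for which some admissible $u$ has $r_n(u)\leq cn$; note also $A^{\mathrm K}_\varsigma(n)$ is an infimum over a set of integers, so it is attained when finite. For such $u$ we have $r\leq\lceil cn\rceil$. Theorem~\ref{thm:mass} then bounds the right side of Step~2 by $C_ce^{-\gamma_cn}\sum_i w_i|u_i|$. Dividing by the positive total mass gives $\tfrac12\leq C_ce^{-\gamma_cn}$, which is false for all large $n$. Hence $A^{\mathrm K}_\varsigma(n)>cn$ for all large $n$ and every $c<\delta_\pi$. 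This is Equation~\eqref{eq:Krawtchouk-sign-lower}, and the case $A^{\mathrm K}_\varsigma(n)=+\infty$ needs no separate treatment.

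All the analytic difficulty is contained in Theorem~\ref{thm:mass}, which we are allowed to assume. The only real step is spotting the identity $\sum_iw_iu_i=0$, which uses the vanishing condition $u_0=0$ together with the eigen-equation at layer $0$. After that, the only points needing care are the ceiling bookkeeping between $r\leq cn$ and the window $i<\lceil cn\rceil$, and the fact that the argument is uniform in the sign $\varsigma$.
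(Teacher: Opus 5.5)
Your proposal is correct and follows the paper's proof essentially step for step. You derive the zero signed mass identity $\sum_i w_iu_i=\varsigma2^{n/2}u_0=0$ directly from the eigen-equation at layer $0$, whereas the paper evaluates the functional equation at $z=1$; this is the same identity. You then obtain the half-mass inequality below the last sign change and contradict it with Theorem~\ref{thm:mass} for each $c<\delta_\pi$, exactly as the paper does. One small slip: in Step~1 you mean setting $i=0$, not $z=0$, in \eqref{eq:Krawtchouk-generating-function}.
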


\begin{proof}
If $u\in\cE_n^\varsigma$ and $u_0=0$, evaluating
Equation~\eqref{eq:Krawtchouk-eigenfunction-functional-equation} at $z=1$
gives
\begin{equation}
 \sum_{i=0}^nw_iu_i=\varsigma2^{n/2}u_0=0.
 \label{eq:origin-zero-signed-mass}
\end{equation}
Hence the positive and negative weighted masses of $u$ are equal.  If
$u_i\geq0$ for every $i\geq r$, the whole negative mass lies below $r$, so
\begin{equation}
 \sum_{i<r}w_i|u_i|\geq\frac12\sum_{i=0}^nw_i|u_i|.
 \label{eq:last-sign-half-mass}
\end{equation}
For every fixed $c<\delta_\pi$, Theorem~\ref{thm:mass} makes this impossible
when $r\leq\lceil cn\rceil$ and $n$ is sufficiently large.  Letting
$c\uparrow\delta_\pi$ proves the claim for both signs.
\end{proof}

\begin{proposition}[{Krawtchouk sign comparison; cf.~\cite[Chapter~1, Appendix~A]{OpenAI2026}}]
\label{prop:Krawtchouk-sign-comparison}
For every $n\geq1$,
\begin{equation}
 A^{\mathrm K}_+(n)\leq A^{\mathrm K}_-(n).
 \label{eq:Krawtchouk-sign-comparison}
\end{equation}
\end{proposition}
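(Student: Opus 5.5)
The plan is to build, from any admissible $(-1)$-eigenvector, an admissible $(+1)$-eigenvector whose last sign change occurs no later. Concretely, let $u\in\cE_n^-$ with $u_0=0$ and $r_n(u)=r<\infty$. If no such $u$ exists, then $A^{\mathrm K}_-(n)=+\infty$ and there is nothing to prove. I would produce a nonzero $v\in\cE_n^+$ with $v_0=0$ and $v_i\geq0$ for $i\geq r$. Taking the infimum over $u$ then gives \eqref{eq:Krawtchouk-sign-comparison}. I have not checked that the construction closes, and I flag below where it may fail.

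The first tool is an intertwining relation between two operators on radial vectors. Let $M$ be multiplication by $K_1^{(n)}(i)=n-2i$. Let $\mathsf A$ be the radial Hamming adjacency operator, $(\mathsf Au)_i=i\,u_{i-1}+(n-i)u_{i+1}$, which is self-adjoint for $\langle\cdot,\cdot\rangle_w$. From Lemma~\ref{lem:Krawtchouk-functional-equation}, multiplication by $n-2i$ corresponds to the Euler-type operator $n-2z\partial_z$ on $G_u$. Conjugating by $z\mapsto\omega(z)$ should turn this into the symbol of $\mathsf A$, giving
\[
\cK_nM=\mathsf A\cK_n,\qquad \cK_n\mathsf A=M\cK_n .
\]
Consequently, for $u\in\cE_n^-$ the vector $v=Mu-\mathsf Au$ satisfies $\cK_nv=-\mathsf Au+Mu=v$, so $v\in\cE_n^+$. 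I would verify the two intertwining relations by differentiating the generating function \eqref{eq:Krawtchouk-generating-function}.

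The remaining work is the boundary and sign conditions, and this is the step I expect to be the main obstacle. At the origin, $v_0=-n\,u_1$, which need not vanish. In the tail, $Mu$ is nonpositive for $i>n/2$, and $-\mathsf Au$ has the wrong sign wherever $u\geq0$. Hence $Mu-\mathsf Au$ alone will not be admissible. I would try first to replace $M$ by a general function $\phi(M)$ and $\mathsf A$ by $\phi(\mathsf A)$. The relation $\cK_n\phi(M)=\phi(\mathsf A)\cK_n$ persists for any $\phi$, and one can pick $\phi$ odd with $\phi(n-2i)$ of a controlled sign. This yields the family of $(+1)$-eigenvectors
\[
v_\phi=\phi(M)u-\phi(\mathsf A)u .
\]
Within this family I would choose $\phi$ to make $v_0=0$ and $(v_\phi)_i\ge 0$ for $i\ge r$.

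As a fallback I would use the quadratic construction, mirroring the Euclidean Appendix~A. The transform carries pointwise products to radial Hamming convolution up to the factor $2^{n/2}$. So for $u\in\cE_n^-$, the pointwise square $u^2$ and the self-convolution $u*u$ are exchanged by $\cK_n$ up to normalization, and $u^2+2^{-n/2}\,u*u$ lies in $\cE_n^+$. Its value at $0$ is the positive number $2^{-n/2}\langle u,u\rangle_w$, so it must be corrected. The correction would subtract a multiple of a $(+1)$-eigenvector that is nonpositive beyond $r$ and positive at $0$. One candidate comes from the Gaussian-type vector $(1+z)^{n}$ symmetrized via Lemma~\ref{lem:Krawtchouk-functional-equation}. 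I would then check the tail sign using $u^2\geq0$ together with a bound on $u*u$ beyond $r$.
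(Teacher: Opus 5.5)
\textbf{There is a genuine gap: the proposal never produces an admissible $v$.}

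Your intertwining relations are correct. On generating polynomials, multiplication by $n-2i$ is $n-2z\partial_z$ and $\mathsf A$ is $nz+(1-z^2)\partial_z$, and the functional equation conjugates one into the other. So every $v_\phi$ does lie in $\cE_n^+$. But this family does not reduce the problem. Because $M$ has distinct eigenvalues, $\phi(M)$ runs over all diagonal matrices, and $\phi(\mathsf A)=\cK_n\phi(M)\cK_n$. Hence $v_\phi=(I+\cK_n)y$, where $y=\phi(M)u$ is an arbitrary vector supported on $\operatorname{supp}u$. Choosing $\phi$ is therefore just a restatement of the task. You give no mechanism that forces $(\phi(\mathsf A)u)_0=0$ or controls the sign of the nonlocal term $(\phi(\mathsf A)u)_i$ for $i\ge r$.

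The quadratic fallback has two unresolved steps. First, the origin correction needs some $h\in\cE_n^+$ with $h_0>0$ and $h_i\le0$ for all $i\ge r$, at the exact radius $r=r_n(u)$ and for every $n$. That is itself a finite-$n$ sign-uncertainty statement of the kind you are trying to prove. Your proposed candidate also has the wrong sign. $G_{\mathbf 1}=(1+z)^n$ symmetrizes to $\mathbf 1+2^{n/2}e_0$, which is positive on every layer. More generally, $\cK_n$ sends $(c^i)_i$ to $2^{-n/2}(1+c)^n(\omega(c)^i)_i$, so for $0\le c\le1$ every symmetrized Gaussian is strictly positive. Second, $u*u$ has no sign beyond $r$, because $u$ is negative somewhere below $r$. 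So $u^2\ge0$ does not give tail nonnegativity.

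\textbf{The missing idea} is a linear construction that uses the fixed point $z_*=\sqrt2-1$ of $\omega$.

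Since $\omega(z_*)=z_*$ and $(1+z_*)^n=2^{n/2}$, the $(-1)$ functional equation at $z_*$ reads $G_u(z_*)=-G_u(z_*)$. Hence $G_u(z)=(z_*-z)Q(z)$ with $\deg Q\le n-1$. Define $v$ by $G_v(z)=-(1+z_*z)Q(z)$.

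\emph{Eigenvalue flip.} The identity $\frac{1+z_*\omega(z)}{z_*-\omega(z)}=-\frac{1+z_*z}{z_*-z}$ shows that replacing the factor $z_*-z$ by $-(1+z_*z)$ flips the eigenvalue, so $v\in\cE_n^+$.

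\emph{Origin.} The condition $u_0=0$ forces $Q(0)=0$, hence $v_0=0$; also $v\ne0$ because $Q\ne0$.

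\emph{Tail.} The coefficients satisfy $w_iu_i=z_*q_i-q_{i-1}$ with $q_{-1}=q_n=0$. Backward induction from $w_nu_n=-q_{n-1}\ge0$, using $z_*>0$, gives $q_i\le0$ for $i\ge r-1$. Then $w_iv_i=-q_i-z_*q_{i-1}\ge0$ for $i\ge r$.

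This is exactly the paper's proof. There it is written with the homogeneous polynomial $\mathcal F_u(X,Y)$ and the anti-invariant and invariant linear forms $z_*X-Y$ and $X+z_*Y$ of the substitution $(X,Y)\mapsto\bigl((X+Y)/\sqrt2,(X-Y)/\sqrt2\bigr)$.
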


\begin{proof}
Put $z_*=\sqrt2-1$ and associate to a radial vector $u$ the homogeneous
polynomial
\begin{equation}
 \mathcal F_u(X,Y)=\sum_{i=0}^nw_iu_iX^{n-i}Y^i.
 \label{eq:homogeneous-Krawtchouk-polynomial}
\end{equation}
The weighted Krawtchouk self-duality relation and
Equation~\eqref{eq:Krawtchouk-generating-function} give
\begin{equation}
 \mathcal F_{\cK_nu}(X,Y)
 =\mathcal F_u\!\left(\frac{X+Y}{\sqrt2},
                      \frac{X-Y}{\sqrt2}\right).
 \label{eq:homogeneous-Hadamard-action}
\end{equation}

Let $u\in\cE_n^-$ satisfy $u_0=0$ and $r=r_n(u)<\infty$.
Since $\cK_nu=-u$ and the linear involution fixes $(X,z_*X)$,
Equation~\eqref{eq:homogeneous-Hadamard-action} gives
\[
 \mathcal F_u(X,z_*X)
 =-\mathcal F_{\cK_nu}(X,z_*X)
 =-\mathcal F_u(X,z_*X).
\]
Thus $\mathcal F_u$ vanishes on the line $M=0$, where $M=z_*X-Y$,
and therefore has the form $\mathcal F_u=M Q$.  Since $M$ changes sign
under the substitution in Equation~\eqref{eq:homogeneous-Hadamard-action},
$Q$ is invariant.
Since $u_0=0$ and $M(X,0)=z_*X$, one also has $Q(X,0)=0$.  Thus
\begin{equation}
 \widetilde F=-LQ,
\qquad L=X+z_*Y,
 \label{eq:sign-comparison-map}
\end{equation}
is invariant because $L$ is fixed by the same substitution.  It is nonzero
and homogeneous of degree $n$, and its $X^n$ coefficient vanishes.  Let $v$
be the radial vector determined by $\mathcal F_v=\widetilde F$.
Equation~\eqref{eq:homogeneous-Hadamard-action} then gives
$v\in\cE_n^+$, while the preceding observations give $v\ne0$ and $v_0=0$.

It remains to check feasibility for the $+1$-eigenfunction problem defining
$A^{\mathrm K}_+(n)$, namely that $v_i\geq0$ for $i\geq r$.  Write
\[
 \mathcal F_u(X,Y)=X^n\sum_{i=0}^np_it^i,
 \qquad
 Q(X,Y)=X^{n-1}\sum_{i=0}^{n-1}q_it^i,
 \qquad t=Y/X,
\]
so $p_i=w_iu_i$, and set $q_{-1}=q_n=0$.  The factorization by
$M=X(z_*-t)$ gives the identity
\begin{equation}
 p_i=z_*q_i-q_{i-1}.
 \label{eq:sign-comparison-recurrence}
\end{equation}
Since $p_i\geq0$ for $i\geq r$, backward induction from
$p_n=-q_{n-1}$ gives $q_i\leq0$ for every $i\geq r-1$.  The coefficient of
$X^{n-i}Y^i$ in $\widetilde F$ is
\begin{equation}
 w_iv_i=-q_i-z_*q_{i-1}\geq0
 \qquad(i\geq r).
 \label{eq:sign-comparison-tail}
\end{equation}
Thus $r_n(v)\leq r_n(u)$.  Taking the infimum over admissible $u$ proves
Equation~\eqref{eq:Krawtchouk-sign-comparison}.
\end{proof}

\begin{lemma}
\label{lem:finite-sign-alternative}
If $1\leq d<n/2$, exactly one of the following holds:
\begin{enumerate}[label=\textup{(\roman*)}]
 \item there is a nonzero $g\in\cE_n^-$ such that $g_0=0$ and
 $g_i\geq0$ for $d\leq i\leq n$;
 \item there is an $h\in\cE_n^+$ such that
 \begin{equation}
  h_0=1,\qquad h_i=0\quad(1\leq i<d),\qquad
  h_i>0\quad(d\leq i\leq n).
  \label{eq:positive-self-dual-alternative}
 \end{equation}
\end{enumerate}
In case \textup{(ii)}, the vector $A_i=w_ih_i$ is feasible for
$\operatorname{LP}_n(d)$ and has objective $2^{n/2}$.
\end{lemma}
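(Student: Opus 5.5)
The plan is to treat the lemma as a Gordan-type theorem of alternatives on the finite-dimensional space $\cE_n^+$, using the weighted inner product $\langle\cdot,\cdot\rangle_w$, in which $\cE_n^+$ and $\cE_n^-$ are orthogonal complements because $\cK_n$ is a $w$-orthogonal involution. Let
\[
 V=\{h\in\cE_n^+:h_i=0\ (1\leq i<d)\}.
\]

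\emph{Step 1: the normalization $h_0=1$ is automatic.} Evaluating \eqref{eq:Krawtchouk-eigenfunction-functional-equation} at $z=1$ gives $\sum_iw_ih_i=2^{n/2}h_0$ for $h\in\cE_n^+$. If $h\in V$ has $h_i>0$ for all $i\geq d$, then $(2^{n/2}-1)h_0=\sum_{i\geq d}w_ih_i>0$. Hence $h_0>0$, and we may rescale to $h_0=1$. So (ii) is equivalent to the existence of $h\in V$ with $h_i>0$ on $[d,n]$.

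\emph{Step 2: Gordan's alternative.} Either such an $h$ exists, or there is a nonzero $y\geq0$ supported on $[d,n]$ with $\langle y,h\rangle_w=0$ for all $h\in V$. The $w$-orthogonal complement of $V$ is $\cE_n^-+\operatorname{span}\{e_i:1\leq i<d\}$. Therefore $y=g+c$ with $g\in\cE_n^-$ and $c$ supported in $[1,d)$. Comparing coordinates outside $[1,d)$ gives:
\begin{itemize}
 \item $g_0=y_0=0$;
 \item $g_i=y_i\geq0$ for $i\geq d$;
 \item $g\neq0$, since $y$ is nonzero somewhere on $[d,n]$.
\end{itemize}
This is (i). So at least one alternative holds.

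\emph{Step 3: exclusivity.} This is the step I expect to need the real input, namely the hypothesis $d<n/2$. Suppose $g$ and $h$ both exist. Orthogonality of $\cE_n^\pm$ gives
\[
 0=\langle g,h\rangle_w=\sum_{i\geq d}w_ig_ih_i .
\]
Every term is nonnegative and $h_i>0$ there, so $g_i=0$ for all $i\geq d$. Together with $g_0=0$, this means $g$ is supported in $[1,d-1]$, so $G_g$ has degree at most $d-1$.

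On the other hand, by \eqref{eq:Krawtchouk-eigenfunction-functional-equation},
\[
 G_g(z)=-2^{-n/2}\sum_{i\leq d-1}w_ig_i(1+z)^{n-i}(1-z)^i .
\]
Each term on the right is divisible by $(1+z)^{n-d+1}$. Since $d<n/2$, we have $n-d+1>d-1$. A polynomial of degree at most $d-1$ divisible by a polynomial of larger degree must vanish, so $G_g=0$ and $g=0$. This contradicts $g\neq0$.

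\emph{Step 4: feasibility.} In case (ii), put $A_i=w_ih_i$. Then $A_0=1$, $A_i=0$ for $1\leq i<d$, and $A_i>0$ for $i\geq d$. The self-duality relation $w_iK_j^{(n)}(i)=w_jK_i^{(n)}(j)$ gives
\[
 \sum_iA_iK_j^{(n)}(i)=w_j2^{n/2}(\cK_nh)_j=2^{n/2}w_jh_j\geq0 .
\]
Finally, Step 1 gives the objective $\sum_iA_i=2^{n/2}h_0=2^{n/2}$.
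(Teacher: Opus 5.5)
Your proof is correct and follows essentially the paper's argument. Both proofs rest on a Gordan/Farkas alternative in the weighted inner product, the evaluation of the functional equation at $z=1$ to force $h_0>0$, the same vanishing-order argument at $z=-1$ (which is where $d<n/2$ enters), and the self-duality computation for feasibility. The only difference is cosmetic: you apply the alternative on the $\cE_n^+$ side (with $V^\perp=\cE_n^-+\operatorname{span}\{e_i:1\le i<d\}$) and use the degree argument to prove exclusivity directly, whereas the paper applies it to the restriction of $\{g\in\cE_n^-:g_0=0\}$ and uses the degree argument as injectivity of that restriction, so that Farkas' ``exactly one'' transfers to the lemma.
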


\begin{proof}
Let $V=\{g\in\cE_n^-:g_0=0\}$, and let $W$ be its image under restriction
to the coordinates $d,\ldots,n$.  This restriction is injective.  Indeed,
if such a $g$ were
supported below $d$ and $G_g$ had degree $m<d<n/2$, the functional equation
would force $G_g$ to vanish to order $n-m>m$ at $-1$, and hence $g=0$.

Farkas' lemma, applied with the weighted inner product, says that exactly
one of the following holds: $W$ contains a nonzero nonnegative vector, or
$W^\perp$ contains a strictly positive vector.  By injectivity, the first
case is \textup{(i)}.
In the second, extend the positive vector by zero on $0,\ldots,d-1$.
Because
\[
 V^\perp=\cE_n^++\operatorname{span}\{e_0\},
\]
the extension has the form $h+ce_0$ with $h\in\cE_n^+$.  Thus $h$ vanishes
on layers $1,\ldots,d-1$ and is strictly positive thereafter.  Evaluating
the $+1$ functional equation at $z=1$ gives
\begin{equation}
 \sum_{i=0}^nw_ih_i=2^{n/2}h_0,
 \label{eq:self-dual-objective}
\end{equation}
so tail positivity implies $h_0>0$.  After normalization, the second case
is \textup{(ii)}.

Finally, $A_i=w_ih_i$ has the required nonnegativity and gap.  Using
$w_iK_j^{(n)}(i)=w_jK_i^{(n)}(j)$ and $\cK_nh=h$ gives
\[
 \sum_{i=0}^nA_iK_j^{(n)}(i)
 =2^{n/2}w_jh_j\geq0,
\]
while Equation~\eqref{eq:self-dual-objective} gives its objective.
\end{proof}

\subsection{Why the rate is one half}

Finite linear-programming duality gives the equivalent formulation
\begin{equation}
 \operatorname{LP}_n(d)=
 \min\left\{F(0):
 \begin{array}{l}
 F(i)=\displaystyle\sum_{j=0}^n b_jK_j^{(n)}(i),\quad
 b_0=1,\quad b_j\geq0,\\
 F(i)\leq0\quad(d\leq i\leq n)
 \end{array}\right\}.
 \label{eq:Delsarte-dual}
\end{equation}
Suppose $F$ is feasible for \eqref{eq:Delsarte-dual}.  Put $f_i=F(i)$, and write
$b=(b_0,\ldots,b_n)$.  The expansion of $F$ says
\begin{equation}
 f=\cK_n(2^{n/2}b),
 \qquad \cK_nf=2^{n/2}b.
 \label{eq:dual-Krawtchouk-pair}
\end{equation}
Thus feasibility of $F$ is the pair of sign conditions
$\cK_nf\geq0$ and $f_i\leq0$ for $i\geq d$.  Moreover,
$(\cK_nf)_0=2^{n/2}$ because $b_0=1$.  A dual certificate with objective
below $2^{n/2}$ produces a nonzero $(-1)$-eigenfunction
$g=f-\cK_nf$, which is negative at the origin and nonpositive on layers
$d,\ldots,n$.
Now, if $g$ is a nonzero $(-1)$-eigenfunction with $g(0)\geq0$, then no more than half its mass can occur beyond the last sign change radius.

\begin{proposition}[Delsarte dual reduction]
\label{prop:dual-reduction}
Suppose that every nonzero $g\in\cE_n^-$ satisfies
\begin{equation}
 \sum_{0\leq i<d}w_i|g_i|
 <\frac12\sum_{i=0}^nw_i|g_i|.
 \label{eq:finite-half-mass}
\end{equation}
Then $\operatorname{LP}_n(d)\geq2^{n/2}$.
\end{proposition}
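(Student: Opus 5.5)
We argue by contradiction through the dual formulation \eqref{eq:Delsarte-dual}. Suppose $\operatorname{LP}_n(d)<2^{n/2}$. Finite LP duality (the primal is feasible with $A_0=1$, and the dual attains its minimum) then gives a feasible $F$ with $F(0)<2^{n/2}$. Write $f_i=F(i)$ and $b$ as in \eqref{eq:dual-Krawtchouk-pair}, so that $\cK_nf=2^{n/2}b\geq0$ entrywise, $(\cK_nf)_0=2^{n/2}$, and $f_i\leq0$ for $i\geq d$.

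Set $g=f-\cK_nf$. Since $\cK_n$ is an involution, $\cK_ng=\cK_nf-f=-g$, so $g\in\cE_n^-$. At the origin, $g_0=F(0)-2^{n/2}<0$, so $g\ne0$. For $i\geq d$ we have $g_i=f_i-2^{n/2}b_i\leq0$, because both terms are nonpositive.

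Next apply the zero-mass identity. Evaluating the functional equation \eqref{eq:Krawtchouk-eigenfunction-functional-equation} at $z=1$ with $\varepsilon=-1$ gives $\sum_iw_ig_i=-2^{n/2}g_0>0$. Write $P$ and $N$ for the positive and negative weighted masses, so that $P-N=-2^{n/2}g_0>0$. All positive entries of $g$ lie in layers $1\leq i<d$: layer $0$ is negative and layers $\geq d$ are nonpositive. Hence
\[
 \sum_{0\leq i<d}w_i|g_i|\;\geq\;P\;>\;\tfrac12(P+N)=\tfrac12\sum_{i=0}^nw_i|g_i|,
\]
where the strict middle inequality is just $P>N$. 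This contradicts \eqref{eq:finite-half-mass}, so $\operatorname{LP}_n(d)\geq2^{n/2}$.

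The only delicate point is the strictness bookkeeping. If $F(0)=2^{n/2}$ exactly, then $g_0=0$ and the inequality above degenerates to $\geq$. That case is harmless, because the claim is only $\operatorname{LP}_n(d)\geq2^{n/2}$, and a strict assumption $F(0)<2^{n/2}$ already yields $P>N$. It is also worth noting that negating $g$ would not help, since the mass below $d$ counts absolute values. No use of Theorem~\ref{thm:mass} is needed here; that theorem only enters later, to verify \eqref{eq:finite-half-mass} with $d=\lceil cn\rceil$.
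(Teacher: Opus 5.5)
Your proof is correct and follows the paper's argument essentially verbatim: from a dual certificate with $F(0)<2^{n/2}$ you form $g=f-\cK_nf\in\cE_n^-$, evaluate the $(-1)$ functional equation at $z=1$ to get $\sum_iw_ig_i=-2^{n/2}g_0>0$, and note that all positive mass lies below layer $d$. The only difference is cosmetic: you argue by contradiction from $\operatorname{LP}_n(d)<2^{n/2}$ via strong duality, whereas the paper shows directly that every dual-feasible $F$ has $F(0)\geq2^{n/2}$.
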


\begin{proof}
Let $F$ be any feasible dual polynomial and use the notation above.  If
$F(0)<2^{n/2}$, set
\begin{equation}
 g=f-\cK_nf.
 \label{eq:dual-minus-eigenfunction}
\end{equation}
Then $\cK_ng=-g$, while dual feasibility gives
$g_i\leq0$ for $i\geq d$.  At the origin,
$g_0=F(0)-2^{n/2}<0$.  Evaluating the functional equation for
$(-1)$-eigenfunctions at $z=1$
gives
\begin{equation}
 \sum_{i=0}^nw_ig_i=-2^{n/2}g_0>0.
 \label{eq:minus-eigenfunction-signed-mass}
\end{equation}
The positive weighted mass of $g$ is therefore larger than its negative
weighted mass.  Every positive entry lies below $d$, so the left side of
\eqref{eq:finite-half-mass} is larger than half the total weighted absolute
mass, a contradiction.
\end{proof}

\begin{lemma}
\label{lem:shortening}
If $A$ is feasible for $\operatorname{LP}_n(d)$ with objective $M$, then
\begin{equation}
 A_i^{\mathrm{sh}}=\frac{n-i}{n}A_i
 \qquad(0\leq i\leq n-1)
 \label{eq:shortening}
\end{equation}
is feasible for $\operatorname{LP}_{n-1}(d)$ and has objective at least
$M/2$.  Consequently, for $0<\delta'<\delta<1/2$,
\begin{equation}
 R_D(\delta)\geq
 1-\frac{\delta}{\delta'}\bigl(1-R_D(\delta')\bigr).
 \label{eq:shortening-rate}
\end{equation}
\end{lemma}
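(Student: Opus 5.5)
The plan is to verify feasibility of $A^{\mathrm{sh}}$ directly through the Krawtchouk generating function, bound its objective using the $j=1$ Delsarte constraint, and then iterate the shortening.

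\emph{Feasibility.} The normalization $A^{\mathrm{sh}}_0=1$, nonnegativity, and the gap $A^{\mathrm{sh}}_i=0$ for $1\le i<d$ are immediate. The main step, and the only one with real content, is the Krawtchouk inequalities for length $n-1$. I will establish this through a generating-function identity.

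Put $f_i(z)=(1+z)^{n-i}(1-z)^i$. Logarithmic differentiation gives $f_i'=f_i\bigl(\tfrac{n-i}{1+z}-\tfrac{i}{1-z}\bigr)$. Multiplying by $(1-z)$ and adding $nf_i$ yields
\[
 2(n-i)(1+z)^{n-1-i}(1-z)^i=(1-z)f_i'(z)+nf_i(z).
\]
Now set $S(z)=\sum_iA_if_i(z)=\sum_jB_jz^j$, where $B_j=\sum_iA_iK^{(n)}_j(i)\ge0$ by feasibility. By \eqref{eq:Krawtchouk-generating-function},
\[
 \sum_{j}\Bigl(\sum_i A^{\mathrm{sh}}_iK^{(n-1)}_j(i)\Bigr)z^j=\frac1{2n}\bigl((1-z)S'(z)+nS(z)\bigr).
\]
Reading off the coefficient of $z^j$ gives $\frac1{2n}\bigl((j+1)B_{j+1}+(n-j)B_j\bigr)$, which is nonnegative.

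\emph{Objective.} The new objective is
\[
 \sum_iA^{\mathrm{sh}}_i=M-\frac1n\sum_i iA_i.
\]
The Delsarte constraint at $j=1$ reads $B_1=\sum_iA_i(n-2i)\ge0$, so $\sum_i iA_i\le nM/2$. Hence the new objective is at least $M/2$.

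\emph{Rate inequality.} Since $d$ stays fixed while the length drops, iterating the shortening $n-m$ times gives
\[
 \operatorname{LP}_m(d)\ge2^{-(n-m)}\operatorname{LP}_n(d).
\]
Choose a sequence of lengths $n$ along which $\frac1n\log_2\operatorname{LP}_n(\lceil\delta' n\rceil)\to R_D(\delta')$, and set $m=\lfloor\delta' n/\delta\rfloor$. Then $\lceil\delta m\rceil\le\lceil\delta' n\rceil$. Because $\operatorname{LP}_m(\cdot)$ is nonincreasing in the distance (the feasible set shrinks as $d$ grows),
\[
 \operatorname{LP}_m(\lceil\delta m\rceil)\ge 2^{-(n-m)}\operatorname{LP}_n(\lceil\delta' n\rceil).
\]
Taking $\frac1m\log_2$ and using $n/m\to\delta/\delta'$, the right side tends to $\frac{\delta}{\delta'}R_D(\delta')-\bigl(\frac{\delta}{\delta'}-1\bigr)$, which is exactly \eqref{eq:shortening-rate}. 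The only care needed here is the rounding of ceilings, and monotonicity in $d$ absorbs it.
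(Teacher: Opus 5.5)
Your proof is correct and follows the paper's argument. The objective bound via the $j=1$ constraint $\sum_iA_i(n-2i)\geq0$ is the same, and so is the rate argument: a sequence realizing the limsup, $m\approx\delta' n/\delta$, iterated shortening, and monotonicity of $\operatorname{LP}_m$ in $d$. Your choice $m=\lfloor\delta' n/\delta\rfloor$ is an inessential variant of the paper's $m_k=\lfloor d_k/\delta\rfloor$. The one difference is that the paper just cites Best--Brouwer for feasibility of $A^{\mathrm{sh}}$. Your identity $(1-z)f_i'+nf_i=2(n-i)(1+z)^{n-1-i}(1-z)^i$, which gives $\sum_iA^{\mathrm{sh}}_iK^{(n-1)}_j(i)=\frac1{2n}\bigl((j+1)B_{j+1}+(n-j)B_j\bigr)$, proves that step directly instead.
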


\begin{proof}
This is a special case of the shortening result in \cite[Section~4]{BestBrouwer1977}.  The objective of $A_i^{\mathrm{sh}}$ satisfies
\[
 \sum_iA_i^{\mathrm{sh}}
 =M-\frac1n\sum_i iA_i
 =\frac M2+\frac1{2n}\sum_iA_iK_1^{(n)}(i)
 \geq\frac M2.
\]
Choose a sequence $n_k\to\infty$ realizing the limsup in
$R_D(\delta')$, set $d_k=\lceil\delta'n_k\rceil$, and put
$m_k=\lfloor d_k/\delta\rfloor$.  Then
$m_k/n_k\to\delta'/\delta$ and $\lceil\delta m_k\rceil\leq d_k$.
After $n_k-m_k$ shortenings,
\[
 \operatorname{LP}_{m_k}(\lceil\delta m_k\rceil)
 \geq 2^{-(n_k-m_k)}\operatorname{LP}_{n_k}(d_k).
\]
Divide by $m_k$ and take the limsup to obtain
Equation~\eqref{eq:shortening-rate}.
\end{proof}

\begin{corollary}[rate-one-half lower bound]
\label{cor:lower}
\[ R_D(\delta_\pi)\geq1/2. \]
\end{corollary}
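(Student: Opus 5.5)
We combine the mass concentration theorem with the Delsarte dual reduction, and then pass to the exact threshold with the shortening lemma. Fix $c<\delta_\pi$ and set $d_n=\lceil cn\rceil$, which satisfies $d_n<n/2$. Theorem~\ref{thm:mass} applies to every nonzero $g\in\cE_n^-$ and gives
\[
 \sum_{0\le i<d_n}w_i|g_i|\le C_ce^{-\gamma_cn}\sum_{i=0}^nw_i|g_i|.
\]
For $n$ large enough that $C_ce^{-\gamma_cn}<1/2$, the right side is strictly less than half the total weighted mass. This is exactly hypothesis~\eqref{eq:finite-half-mass} with $d=d_n$; the inequality is strict because $g\neq0$ forces the total mass to be positive. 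Proposition~\ref{prop:dual-reduction} then gives $\operatorname{LP}_n(\lceil cn\rceil)\ge 2^{n/2}$ for all large $n$. Taking $\frac1n\log_2$ and the limsup yields $R_D(c)\ge 1/2$ for every $c<\delta_\pi$.

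The remaining step is to reach $\delta_\pi$ itself, since Theorem~\ref{thm:mass} only applies for $c$ strictly below $\delta_\pi$. For this we use Lemma~\ref{lem:shortening} with $\delta=\delta_\pi$ and $\delta'=c<\delta_\pi$:
\[
 R_D(\delta_\pi)\ge 1-\frac{\delta_\pi}{c}\bigl(1-R_D(c)\bigr)\ge 1-\frac{\delta_\pi}{2c}.
\]
Letting $c\uparrow\delta_\pi$, the right side tends to $1-\tfrac12=\tfrac12$. Hence $R_D(\delta_\pi)\ge1/2$.

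The analytic work is already contained in Theorem~\ref{thm:mass}, so the only point needing care is the passage to the endpoint. A direct argument at $c=\delta_\pi$ is unavailable, and monotonicity of $R_D$ in $\delta$ points the wrong way: it bounds $R_D(\delta_\pi)$ from above by $R_D(c)$. The shortening inequality~\eqref{eq:shortening-rate} is what supplies the needed comparison in the correct direction.
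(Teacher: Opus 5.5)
Your proposal is correct and follows the paper's proof essentially verbatim. Theorem~\ref{thm:mass} gives the half-mass hypothesis for $d=\lceil cn\rceil$, Proposition~\ref{prop:dual-reduction} then yields $R_D(c)\geq1/2$, and the shortening inequality~\eqref{eq:shortening-rate} with $\delta'=c$ (taking $0<c<\delta_\pi$) passes to the endpoint as $c\uparrow\delta_\pi$; your explicit checks of the strict inequality and of the bound $R_D(\delta_\pi)\geq1-\delta_\pi/(2c)$ just spell out details the paper leaves implicit.
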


\begin{proof}
Fix $c<\delta_\pi$ and set $d=\lceil cn\rceil$.  For large $n$,
Theorem~\ref{thm:mass} gives \eqref{eq:finite-half-mass} for every nonzero
$g\in\cE_n^-$.  Proposition~\ref{prop:dual-reduction} therefore gives
$\operatorname{LP}_n(d)\geq2^{n/2}$, hence $R_D(c)\geq1/2$.
Apply Equation~\eqref{eq:shortening-rate} with $\delta'=c$ and let
$c\uparrow\delta_\pi$.
\end{proof}

\subsection{The upper bound on \texorpdfstring{$J$}{J}}
\label{sec:matrix-inequality}

Recall from Equation~\eqref{eq:local-channel-coordinates} that, for a
normalized Horn pair,
\begin{equation}
 \Gamma(K,L)=2\tr(K^{1/2}L^{1/2}),\qquad
 \Phi(K,L)=\Ent(K)+\Ent(L)-\Ent(K+L).
 \label{eq:recalled-channel-coordinates}
\end{equation}
Put $G=K+L$ and $Q=K-L$.  The functional to be bounded is
\begin{equation}
 J(K,L)=\tr(K^{1/2}L^{1/2})
 +\frac12\tr\!\left(G^{-1/2}QG^{-1/2}Q\right),
 \label{eq:J-body}
\end{equation}
where the inverse is taken on $\operatorname{supp}G$.

\begin{lemma}[dilution]
\label{lem:Horn-dilution}
If $(K,L)$ is a normalized Horn pair, $0<\delta<1/2$, and
$1-2\delta<\Gamma(K,L)$, then
\begin{equation}
 R_D(\delta)\leq
 \frac{1-2\delta}{\Gamma(K,L)}\Phi(K,L).
 \label{eq:Horn-dilution}
\end{equation}
\end{lemma}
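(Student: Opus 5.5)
The plan is to reduce Lemma~\ref{lem:Horn-dilution} to Theorem~\ref{thm:realization} by \emph{diluting} the pair $(K,L)$ with an orthogonal, information-free component. This component scales both $\Gamma$ and $\Phi$ by the same factor $\lambda\in(0,1]$. Concretely, for $(K,L)$ in $M_m(\CC)$ and $0<\lambda\leq1$, I will work in $M_{m+1}(\CC)=M_m(\CC)\oplus\CC$ and set
\[
 K_\lambda=\lambda K\oplus(1-\lambda),\qquad L_\lambda=\lambda L\oplus0 .
\]
Both operators are positive semidefinite, and $\tr(K_\lambda+L_\lambda)=\lambda+(1-\lambda)=1$, so $(K_\lambda,L_\lambda)$ is again a normalized Horn pair.

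The first key step is $\Gamma(K_\lambda,L_\lambda)=\lambda\Gamma(K,L)$. This is immediate because the square roots are block diagonal and the extra block contributes $\sqrt{1-\lambda}\cdot0=0$.

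The second key step is $\Phi(K_\lambda,L_\lambda)=\lambda\Phi(K,L)$. For a block-diagonal operator $\lambda A\oplus c$, entropy splits as
\[
 \Ent(\lambda A\oplus c)=\lambda\Ent(A)-(\tr A)\,\lambda\log_2\lambda-c\log_2c .
\]
I apply this to $K_\lambda$, $L_\lambda$ and $G_\lambda=\lambda G\oplus(1-\lambda)$. The $-\lambda\log_2\lambda$ terms then contribute $\tr K+\tr L$ and $\tr G$ respectively, and these are equal to $1$. The $-(1-\lambda)\log_2(1-\lambda)$ term appears once on each side. Both extra terms therefore cancel in $\Ent(K_\lambda)+\Ent(L_\lambda)-\Ent(G_\lambda)$, leaving $\lambda\Phi(K,L)$.

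Finally, given $1-2\delta<\Gamma(K,L)$, every $\lambda$ with $(1-2\delta)/\Gamma(K,L)<\lambda\leq1$ gives $\Gamma(K_\lambda,L_\lambda)>1-2\delta$. Theorem~\ref{thm:realization} then yields $R_D(\delta)\leq\lambda\Phi(K,L)$. Letting $\lambda\downarrow(1-2\delta)/\Gamma(K,L)$ proves \eqref{eq:Horn-dilution}.

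There is no real obstacle here. The only points needing care are the entropy bookkeeping, where the normalization $\tr(K+L)=1$ makes the mixing terms cancel, and the observation that $\Phi(K,L)\geq0$ (subadditivity-type, or simply use the $\inf$ over admissible $\lambda$). If needed, the observation $\Phi\ge0$ is not actually required, since the bound holds for every admissible $\lambda$ and we just take the infimum.
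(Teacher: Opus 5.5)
Your proposal is correct and is essentially the paper's own proof: the same dilution $sK\oplus[1-s]$, $sL\oplus[0]$, the same scaling $\Gamma\mapsto s\Gamma$, $\Phi\mapsto s\Phi$, Theorem~\ref{thm:realization} applied to the diluted pair, and the limit $s\downarrow(1-2\delta)/\Gamma(K,L)$. Your entropy bookkeeping (the $-\lambda\log_2\lambda$ terms cancel because $\tr K+\tr L=\tr G=1$) and your remark that the sign of $\Phi$ is irrelevant, by continuity in $\lambda$, just make explicit what the paper leaves implicit.
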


\begin{proof}
Write $\Gamma=\Gamma(K,L)$ and $\Phi=\Phi(K,L)$.  For
$(1-2\delta)/\Gamma<s<1$, the pair
\[
 K_s=sK\oplus[1-s],\qquad L_s=sL\oplus[0]
\]
is normalized and satisfies
$\Gamma(K_s,L_s)=s\Gamma$ and $\Phi(K_s,L_s)=s\Phi$.
Theorem~\ref{thm:realization} therefore gives
$R_D(\delta)\leq s\Phi$.  Letting
$s\downarrow(1-2\delta)/\Gamma$ proves the claim.
\end{proof}

\begin{corollary}
\label{cor:half-rate-matrix-inequality}
For every normalized Horn pair,
\begin{equation}
 \Gamma(K,L)>\frac2\pi
 \quad\Longrightarrow\quad
 \Phi(K,L)>\frac12.
 \label{eq:half-rate-matrix-inequality}
\end{equation}
\end{corollary}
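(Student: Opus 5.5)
The plan is to combine the lower bound of Corollary~\ref{cor:lower} with the dilution bound of Lemma~\ref{lem:Horn-dilution}, arguing by contradiction. Fix a normalized Horn pair $(K,L)$ with $\Gamma=\Gamma(K,L)>2/\pi$. Since $\delta_\pi=\frac12-\frac1\pi$, we have $1-2\delta_\pi=2/\pi<\Gamma$. The hypothesis of Lemma~\ref{lem:Horn-dilution} is therefore met at $\delta=\delta_\pi$, and it gives
\[
 R_D(\delta_\pi)\leq\frac{2/\pi}{\Gamma}\,\Phi(K,L).
\]
Corollary~\ref{cor:lower} gives $R_D(\delta_\pi)\geq\frac12$. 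Combining the two,
\[
 \Phi(K,L)\geq\frac{\pi\Gamma}{4}>\frac{\pi}{4}\cdot\frac{2}{\pi}=\frac12 .
\]
The inequality is strict because $\Gamma>2/\pi$ is strict and the factor $\pi/4$ is positive. This proves \eqref{eq:half-rate-matrix-inequality}.

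Before writing this up, I will check two things:

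\begin{enumerate}[label=\textup{(\alph*)}]
 \item That $\Phi\geq0$, so the displayed bound makes sense. This holds because $\Ent(K)+\Ent(L)\geq\Ent(K+L)$ by concavity of entropy together with the grouping term. In fact it is not even needed, since the chain of inequalities goes directly from $\frac12\leq\frac{2}{\pi\Gamma}\Phi$.
 \item That Lemma~\ref{lem:Horn-dilution} needs only $0<\delta<\frac12$, which $\delta_\pi$ satisfies.
\end{enumerate}

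An alternative, which avoids dilution, is to apply Theorem~\ref{thm:realization} directly at a larger distance $\delta>\delta_\pi$ with $1-2\delta<\Gamma$. One would then need $R_D(\delta)\geq\frac12$ for such $\delta$, and we only have this at $\delta_\pi$. So dilution is the right tool here: it rescales $\Gamma$ down to the threshold while scaling $\Phi$ by the same factor.

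I expect no genuine obstacle. The only delicate point is the strictness of the final inequality, and it comes for free from the strict hypothesis $\Gamma>2/\pi$ via the factor $\pi\Gamma/4>1/2$. All of the analytic and representation-theoretic weight is already carried by Theorem~\ref{thm:mass} (through Corollary~\ref{cor:lower}) and by Theorem~\ref{thm:realization} (through Lemma~\ref{lem:Horn-dilution}).
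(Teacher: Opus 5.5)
Your proposal is correct and is exactly the paper's argument: Lemma~\ref{lem:Horn-dilution} at $\delta=\delta_\pi$ (where $1-2\delta_\pi=2/\pi<\Gamma$) combined with Corollary~\ref{cor:lower} gives $\frac12\le\frac{2}{\pi\Gamma}\Phi$, hence $\Phi\ge\pi\Gamma/4>\frac12$. One small aside is that remark (a) is unnecessary, as you say, but concavity of entropy points the wrong way there; $\Phi\ge0$ instead follows from the mixture bound $\Ent(pA+qB)\le p\Ent(A)+q\Ent(B)+h_2(p)$ applied with $A=K/p$, $B=L/q$, $p=\tr K$, $q=\tr L$.
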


\begin{proof}
Lemma~\ref{lem:Horn-dilution} and Corollary~\ref{cor:lower} give
\[
 \frac12\leq R_D(\delta_\pi)
 \leq\frac{2}{\pi\Gamma(K,L)}\Phi(K,L).
\]
Thus $\Phi(K,L)\geq\pi\Gamma(K,L)/4>1/2$.
\end{proof}

For a pair $(K,L)$ define two Gram matrices
\begin{equation}
 \mathcal G_\pm=
 \begin{pmatrix}
 K&\mathord\pm K^{1/2}L^{1/2}\\
 \mathord\pm L^{1/2}K^{1/2}&L
 \end{pmatrix}
 \label{eq:Gram-pm}
\end{equation}
and the matrices
$K^\perp=\mathcal G_+/2$, $L^\perp=\mathcal G_-/2$.

\begin{proposition}[Gram symmetrization]
\label{prop:duality}
The pair $(K^\perp, L^\perp)$ is normalized and satisfies
\begin{align}
 \Phi(K^\perp,L^\perp)&=1-\Phi(K,L),
 \label{eq:dual-entropy}\\
 \Gamma(K^\perp,L^\perp)
 &=\tr(G^{-1/2}QG^{-1/2}Q).
 \label{eq:dual-Gamma}
\end{align}
Consequently the Gram-symmetrized pair
\begin{equation}
 \widehat K=\frac12K\oplus\frac12K^\perp,
 \qquad
 \widehat L=\frac12L\oplus\frac12L^\perp
 \label{eq:canonical-direct-sum}
\end{equation}
has
\begin{equation}
 \Phi(\widehat K,\widehat L)=\frac12,
 \qquad
 \Gamma(\widehat K,\widehat L)=J(K,L).
 \label{eq:canonical-direct-sum-coordinates}
\end{equation}
\end{proposition}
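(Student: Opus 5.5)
The plan is to exhibit both Gram matrices explicitly as Gram matrices of $m\times 2m$ block rows, and then read off every spectral quantity from the $m\times m$ products of those rows. Put
\[
 V_\pm=\bigl(K^{1/2}\ \ \pm L^{1/2}\bigr),
 \qquad\text{so that}\qquad
 \mathcal G_\pm=V_\pm^*V_\pm,\qquad V_\pm V_\pm^*=K+L=G,\qquad V_+V_-^*=V_-V_+^*=K-L=Q .
\]

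\emph{Normalization.} Both $\mathcal G_\pm$ are positive semidefinite, being Gram matrices. Each has trace $\tr K+\tr L=1$, so $\tr K^\perp=\tr L^\perp=\frac12$ and $\tr(K^\perp+L^\perp)=1$. Hence $(K^\perp,L^\perp)$ is a normalized Horn pair, padded to size $2m$.

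\emph{The entropy identity \eqref{eq:dual-entropy}.}
\begin{itemize}
 \item Since $V^*V$ and $VV^*$ have the same nonzero spectrum, $\mathcal G_+$ and $\mathcal G_-$ both have nonzero spectrum equal to that of $G$. Using $\sum_i g_i=1$, this gives
 \[
 \Ent(K^\perp)=\Ent(L^\perp)=\tfrac12\bigl(\Ent(G)+1\bigr).
 \]
 \item The off-diagonal blocks cancel in the sum, so $K^\perp+L^\perp=K\oplus L$, whose entropy is $\Ent(K)+\Ent(L)$.
\end{itemize}
Subtracting gives $\Phi(K^\perp,L^\perp)=\Ent(G)+1-\Ent(K)-\Ent(L)=1-\Phi(K,L)$.

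\emph{The overlap identity \eqref{eq:dual-Gamma}.} We have $\Gamma(K^\perp,L^\perp)=2\tr\bigl((\mathcal G_+/2)^{1/2}(\mathcal G_-/2)^{1/2}\bigr)=\tr(\mathcal G_+^{1/2}\mathcal G_-^{1/2})$.

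The key step is the square-root formula
\[
 \mathcal G_\pm^{1/2}=V_\pm^*G^{-1/2}V_\pm ,
\]
with $G^{-1/2}$ taken on $\operatorname{supp}G$. To check it:
\begin{itemize}
 \item The right side is positive semidefinite.
 \item Its square is $V_\pm^*G^{-1/2}(V_\pm V_\pm^*)G^{-1/2}V_\pm=V_\pm^*\,p_G\,V_\pm$, where $p_G$ is the support projection of $G$.
 \item The range of $V_\pm$ lies in $\operatorname{supp}G$, because $\ker G=\ker V_\pm^*$. Hence $p_GV_\pm=V_\pm$ and the square equals $\mathcal G_\pm$.
\end{itemize}
Cyclicity of the trace then gives
\[
 \tr(\mathcal G_+^{1/2}\mathcal G_-^{1/2})=\tr\bigl(G^{-1/2}(V_+V_-^*)G^{-1/2}(V_-V_+^*)\bigr)=\tr(G^{-1/2}QG^{-1/2}Q).
\]
This support bookkeeping is the only point needing care.

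\emph{The direct sum \eqref{eq:canonical-direct-sum-coordinates}.} Use the scaling rule $\Ent(\frac12A\oplus\frac12B)=\frac12\Ent(A)+\frac12\Ent(B)+\frac12(\tr A+\tr B)$.
\begin{itemize}
 \item Apply it to $\widehat K$, to $\widehat L$, and to $\widehat K+\widehat L=\frac12G\oplus\frac12(K\oplus L)$. The linear trace terms are $\frac12(\tr K+\tr K^\perp)+\frac12(\tr L+\tr L^\perp)-\frac12(\tr G+\tr(K\oplus L))$, which cancel since both halves sum to $1$. Therefore
 \[
 \Phi(\widehat K,\widehat L)=\tfrac12\Phi(K,L)+\tfrac12\bigl(1-\Phi(K,L)\bigr)=\tfrac12 .
 \]
 \item Square roots respect direct sums and scale by $2^{-1/2}$. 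Hence
 \[
 \Gamma(\widehat K,\widehat L)=\tr(K^{1/2}L^{1/2})+\tfrac12\Gamma(K^\perp,L^\perp),
 \]
 which equals $J(K,L)$ by \eqref{eq:dual-Gamma}.
\end{itemize}
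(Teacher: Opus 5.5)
Your proof is correct and follows the paper's argument. Your $V_\pm^*$ is exactly the paper's $Z_\pm$, and the steps are the same: the shared nonzero spectrum with $G$, the identity $(\mathcal G_++\mathcal G_-)/2=K\oplus L$, the square-root formula $\mathcal G_\pm^{1/2}=V_\pm^*G^{-1/2}V_\pm$, cyclicity with $V_+V_-^*=Q$, and additivity and homogeneity under direct sums. Your check that $\operatorname{ran}V_\pm=\operatorname{supp}G$ simply makes explicit the support bookkeeping that the paper leaves to the phrase ``on their supports.''
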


\begin{proof}
Write $Zx=(K^{1/2}x,L^{1/2}x)$ and
$Z_-x=(K^{1/2}x,-L^{1/2}x)$.  Then
$\mathcal G_+=ZZ^*$, $\mathcal G_-=Z_-Z_-^*$, while
$Z^*Z=Z_-^*Z_-=G$.  Thus both Gram matrices have the same nonzero spectrum
as $G$.  Since
$(\mathcal G_++\mathcal G_-)/2=\operatorname{diag}(K,L)$, entropy scaling
gives Equation~\eqref{eq:dual-entropy}.

On their supports,
$\mathcal G_+^{1/2}=ZG^{-1/2}Z^*$ and
$\mathcal G_-^{1/2}=Z_-G^{-1/2}Z_-^*$.  Cyclicity of trace and
$Z^*Z_-=Q$ prove Equation~\eqref{eq:dual-Gamma}.  Since $\Phi$ and
$\Gamma$ are additive under orthogonal direct sums and homogeneous,
Equation~\eqref{eq:canonical-direct-sum-coordinates} follows.
\end{proof}

\begin{corollary}
\label{cor:J-upper}
For every normalized Horn pair $(K,L)$,
\begin{equation}
 J(K,L)\leq\frac2\pi.
 \label{eq:J-upper}
\end{equation}
\end{corollary}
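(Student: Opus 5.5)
The plan is to combine Proposition~\ref{prop:duality} with Corollary~\ref{cor:half-rate-matrix-inequality}, and then close the gap between strict and non-strict inequalities by a perturbation. Given a normalized Horn pair $(K,L)$, form the Gram-symmetrized pair $(\widehat K,\widehat L)$ of Equation~\eqref{eq:canonical-direct-sum}. First we check that it is again a normalized Horn pair. Both blocks are positive semidefinite, and
\[
 \tr(\widehat K+\widehat L)=\tfrac12\tr(K+L)+\tfrac12\tr(K^\perp+L^\perp)=1,
\]
since $(K^\perp,L^\perp)$ is normalized by Proposition~\ref{prop:duality}. Equation~\eqref{eq:canonical-direct-sum-coordinates} then gives
\[
 \Phi(\widehat K,\widehat L)=\tfrac12,\qquad \Gamma(\widehat K,\widehat L)=J(K,L).
\]

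Suppose, for contradiction, that $J(K,L)>2/\pi$. Then $\Gamma(\widehat K,\widehat L)>2/\pi$, so Corollary~\ref{cor:half-rate-matrix-inequality} applied to $(\widehat K,\widehat L)$ yields $\Phi(\widehat K,\widehat L)>1/2$. This contradicts $\Phi(\widehat K,\widehat L)=1/2$. Hence $J(K,L)\le 2/\pi$.

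The corollary has a strict hypothesis and a strict conclusion, while the symmetrized pair sits exactly at $\Phi=1/2$. So the argument only needs the implication in the direction we use, and no limiting step is required.

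The main point needing care is that Proposition~\ref{prop:duality} holds in full generality, including singular $G$, where inverses are taken on $\operatorname{supp}G$. We take that proposition as given. If extra care were needed, we would instead prove the bound for $K,L$ with $G$ invertible and pass to the limit. This works because $J$ is continuous under the perturbation $K\mapsto(1-\epsilon)K+\epsilon I/(2m)$, $L\mapsto(1-\epsilon)L+\epsilon I/(2m)$. The term $\tr(G^{-1/2}QG^{-1/2}Q)$ is bounded by $\tr G$, since it equals $\Gamma(K^\perp,L^\perp)\le 1$.
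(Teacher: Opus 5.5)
Your proposal is correct and is essentially the paper's argument: Gram symmetrization (Proposition~\ref{prop:duality}) produces a normalized pair with $\Phi=\frac12$ and $\Gamma=J(K,L)$, and $J(K,L)>2/\pi$ would then contradict Corollary~\ref{cor:half-rate-matrix-inequality}. As you note yourself, the perturbation and continuity remarks are not needed, because Proposition~\ref{prop:duality} already handles singular $G$ by taking inverses on $\operatorname{supp}G$.
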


Indeed, if $J(K,L)>2/\pi$, then
Equation~\eqref{eq:canonical-direct-sum-coordinates} gives a normalized
positive pair $(\widehat K,\widehat L)$ with $\Phi=1/2$ and $\Gamma>2/\pi$,
contradicting Corollary~\ref{cor:half-rate-matrix-inequality}.

\section{Construction of Horn pairs approaching \texorpdfstring{$J=2/\pi$}{J = 2/pi}}
\label{sec:recovery}

We now construct finite-dimensional normalized Horn pairs $(K_N,L_N)$ for
which $J(K_N,L_N)\to2/\pi$.  Together with
Corollary~\ref{cor:J-upper}, this determines the supremum of
$J$ over vector spaces of arbitrary dimension.  Section~\ref{sec:closing} then converts the same sequence into the
upper bound for $R_D$.

Let
\begin{equation}
 J_{\rm fin}=
 \sup\left\{J(K,L):
 \begin{array}{c}
 K,L\succeq0\text{ on a finite-dimensional Hilbert space},\\
 \tr(K+L)=1
 \end{array}\right\}.
 \label{eq:J-all-dimensional}
\end{equation}
Corollary~\ref{cor:J-upper} gives $J_{\rm fin}\leq2/\pi$, and we must
construct a sequence proving equality.  The feasible set in each fixed
dimension is convex, being a semidefinite slice, but $J$ is not concave.
Moreover, the following proposition shows that our construction must not
allow $K$ and $L$ to be simultaneously diagonalizable: the relative position
of $K$ and $L$ is important as well.

\begin{proposition}[Commuting pairs do not suffice]
\label{prop:commuting-ceiling}
If $K$ and $L$ commute, then
\begin{equation}
 J(K,L)\leq\frac58<\frac2\pi.
 \label{eq:commuting-ceiling}
\end{equation}
More precisely, after simultaneous diagonalization, put
\begin{equation}
 s_i=k_i+l_i,\qquad
 p_i=\frac{\sqrt{k_il_i}}{s_i}
 \label{eq:commuting-coordinates}
\end{equation}
on the indices for which $s_i>0$.  Then
\begin{equation}
 \frac58-J(K,L)
 =2\sum_i s_i\left(p_i-\frac14\right)^2.
 \label{eq:commuting-stability}
\end{equation}
\end{proposition}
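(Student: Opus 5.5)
When $K$ and $L$ commute, $J(K,L)$ reduces to a weighted sum of a single scalar function of the ratio $p_i$. The plan is to compute that sum exactly and complete the square in each summand.

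First choose an orthonormal basis diagonalizing both $K$ and $L$, with eigenvalues $k_i,l_i\ge 0$. Indices with $s_i=k_i+l_i=0$ contribute nothing to either term of $J$, because the inverse $G^{-1/2}$ is taken on $\operatorname{supp}G$. On the remaining indices everything is diagonal:
\[
 \tr(K^{1/2}L^{1/2})=\sum_i\sqrt{k_il_i}=\sum_i s_ip_i,
 \qquad
 \tr\!\left(G^{-1/2}QG^{-1/2}Q\right)=\sum_i\frac{(k_i-l_i)^2}{s_i}.
\]
The key algebraic step is the identity $(k_i-l_i)^2=s_i^2-4k_il_i=s_i^2(1-4p_i^2)$. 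It turns the second trace into $\sum_i s_i(1-4p_i^2)$, so that
\[
 J(K,L)=\sum_i s_i\Bigl(p_i+\tfrac12-2p_i^2\Bigr).
\]

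Next use the normalization $\sum_i s_i=\tr(K+L)=1$ to write $\tfrac58=\sum_i s_i\cdot\tfrac58$. Subtracting summand by summand gives
\[
 \tfrac58-\Bigl(p_i+\tfrac12-2p_i^2\Bigr)=2p_i^2-p_i+\tfrac18=2\Bigl(p_i-\tfrac14\Bigr)^2,
\]
which is exactly \eqref{eq:commuting-stability}. Since every $s_i\ge0$, the right side is nonnegative, so $J(K,L)\le 5/8$.

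Finally, the strict comparison $5/8<2/\pi$ is numerical: it is equivalent to $\pi<16/5=3.2$, which holds. There is no real obstacle anywhere in this argument. The only points needing care are discarding the indices with $s_i=0$, consistent with taking inverses on $\operatorname{supp}G$, and noting that $p_i\in[0,\tfrac12]$ by AM--GM, although the identity itself does not require this.
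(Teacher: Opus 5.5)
Your proposal is correct and follows the paper's proof essentially step for step. You diagonalize, substitute to get $J(K,L)=\sum_i s_i\bigl(\tfrac12+p_i-2p_i^2\bigr)$ using $\sum_i s_i=1$, and complete the square to obtain $2\bigl(p_i-\tfrac14\bigr)^2$; you also spell out the support convention for the $s_i=0$ indices and the numerical check $\pi<16/5$, which the paper leaves implicit.
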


\begin{proof}
Since $\sum_i s_i=1$, direct substitution in Equation~\eqref{eq:J} gives
\[
 J(K,L)=\sum_i s_i\left(\frac12+p_i-2p_i^2\right).
\]
The identity
\[
 \frac58-\left(\frac12+p-2p^2\right)
 =2\left(p-\frac14\right)^2
\]
proves Equation~\eqref{eq:commuting-stability} and the bound.
\end{proof}

Passing to twice the dimension, every positive pair has the following projective normal form.

\begin{proposition}[projective normal form]
\label{prop:projective-normal-form}
Let $K,L\succeq0$ on $\mathcal H$ and $\tr(K+L)=1$.  There are a Hilbert
space $\widetilde{\mathcal H}$ of dimension at most
$2\dim\mathcal H$, an isometry $V:\mathcal H\to\widetilde{\mathcal H}$,
a state $G$ on $\widetilde{\mathcal H}$, and an orthogonal projection $P$
such that
\begin{equation}
 VKV^*=G^{1/2}PG^{1/2},\qquad
 VLV^*=G^{1/2}(I-P)G^{1/2}.
 \label{eq:projective-normal-form}
\end{equation}
Conversely, every state $G$ and projection $P$ give a normalized positive
pair by the right side of Equation~\eqref{eq:projective-normal-form}.
\end{proposition}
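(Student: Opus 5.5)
The plan is to reuse the Gram construction from Proposition~\ref{prop:duality}. Write $Z:\mathcal H\to\mathcal H\oplus\mathcal H$, $Zx=(K^{1/2}x,L^{1/2}x)$, and set $\widetilde{\mathcal H}=\mathcal H\oplus\mathcal H$, so $\dim\widetilde{\mathcal H}=2\dim\mathcal H$. Let $P$ be the projection onto the first summand and $G=\mathcal G_+=ZZ^*$. Then $\tr G=\tr(Z^*Z)=\tr(K+L)=1$, so $G$ is a state. The identity $\mathcal G_+=ZZ^*$ is the one already used in the proof of Proposition~\ref{prop:duality}.

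Step one is to compute $G^{1/2}PG^{1/2}$ and $G^{1/2}(I-P)G^{1/2}$. By the polar decomposition $Z=U|Z|$ with $|Z|=(Z^*Z)^{1/2}=(K+L)^{1/2}$, we get $G^{1/2}=U(K+L)^{1/2}U^*$, where $U$ is a partial isometry from $\operatorname{supp}(K+L)$ onto $\ran Z$. Hence
\[
 G^{1/2}PG^{1/2}=U(K+L)^{1/2}\,U^*PU\,(K+L)^{1/2}U^*.
\]
Since $U^*=|Z|^{-1}Z^*$ on $\ran Z$, we have $U^*PU=(K+L)^{-1/2}Z^*PZ(K+L)^{-1/2}$ on the support, and $Z^*PZ=K$. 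This gives $G^{1/2}PG^{1/2}=UKU^*$, because $K$ is supported in $\operatorname{supp}(K+L)$. In the same way $Z^*(I-P)Z=L$ gives $G^{1/2}(I-P)G^{1/2}=ULU^*$.

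Step two is to extend the partial isometry $U|_{\operatorname{supp}(K+L)}$ to an isometry $V:\mathcal H\to\widetilde{\mathcal H}$. Map $\ker(K+L)$ isometrically into $(\ran Z)^\perp$; this is possible because $\dim(\ran Z)^\perp=2\dim\mathcal H-\rank(K+L)\geq\dim\ker(K+L)$. The kernel of $K+L$ is killed by both $K$ and $L$, so $VKV^*=UKU^*$ and $VLV^*=ULU^*$, which is Equation~\eqref{eq:projective-normal-form}.

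The converse is immediate. Both $G^{1/2}PG^{1/2}$ and $G^{1/2}(I-P)G^{1/2}$ are positive, and their sum is $G$, which has trace one.

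The only delicate point is the bookkeeping on supports when $K+L$ is singular: we must check that $U^*PU$ is computed correctly on $\operatorname{supp}(K+L)$ and that the extension of $V$ does not disturb anything. This is handled by the observation that $K$ and $L$ vanish on $\ker(K+L)$. That holds because $0\preceq K\preceq K+L$, and likewise for $L$.
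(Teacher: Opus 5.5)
Your proof is correct, and it has the same skeleton as the paper's proof. You double the space, take $P$ to be the projection onto the first summand, and let $G$ be the image of $G_0=K+L$ under an isometry. The key fact is that this isometry compresses $P$ to $D=G_0^{-1/2}KG_0^{-1/2}$, so that $G^{1/2}PG^{1/2}=VG_0^{1/2}DG_0^{1/2}V^*=VKV^*$.

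The real difference is the choice of isometry.
\begin{itemize}
\item The paper uses the square-root dilation $Vx=(D^{1/2}x,(I-D)^{1/2}x)$. This is already an isometry on all of $\mathcal H$: on $\ker G_0$ it is $x\mapsto(0,x)$, because $D$ is extended by zero there. So the paper needs no extension step and no dimension count.
\item You use the polar part $U$ of $Z=(K^{1/2},L^{1/2})$. This is only a partial isometry and must be extended across $\ker(K+L)$. In exchange, your state is exactly the Gram matrix $\mathcal G_+=2K^\perp$ of Proposition~\ref{prop:duality}.
\end{itemize}
On $\operatorname{supp}G_0$ the two isometries agree up to a block-diagonal partial isometry, which commutes with $P$.

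One small bookkeeping point. You justify $U^*=|Z|^{-1}Z^*$ only on $\ran Z$, but you apply it to $PUx$, which need not lie in $\ran Z$. The identity does hold on all of $\widetilde{\mathcal H}$, since both sides vanish on $(\ran Z)^\perp=\ker Z^*$. You can also avoid it entirely by writing
\[
 G^{1/2}PG^{1/2}=U(|Z|U^*)P(U|Z|)U^*=UZ^*PZU^*=UKU^*.
\]
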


\begin{proof}
Put $G_0=K+L$ and let
\[
 D=G_0^{-1/2}KG_0^{-1/2}.
\]
The inverse is taken on $\operatorname{supp}G_0$, and $D$ is extended by
zero on $\ker G_0$.  Then $0\preceq D\preceq I$.  On
$\widetilde{\mathcal H}=\mathcal H\oplus\mathcal H$, define
\[
 Vx=\binom{D^{1/2}x}{(I-D)^{1/2}x},\qquad
 P=\begin{pmatrix}I&0\\0&0\end{pmatrix},\qquad
 G=VG_0V^*.
\]
The map $V$ is an isometry and $V^*PV=D$.  Functional calculus on
$\operatorname{ran}V$ gives $G^{1/2}=VG_0^{1/2}V^*$, while
$\tr G=\tr G_0=1$.  Hence
\[
 G^{1/2}PG^{1/2}=VKV^*,\qquad
 G^{1/2}(I-P)G^{1/2}=VLV^*.
\]
The converse is immediate: the two matrices are positive, have sum $G$,
and therefore have trace sum one.
\end{proof}

We note that it is a straightforward observation that $J$ is unchanged under embedding by an isometry $V$, so optimization over pairs $(K, L)$ can be reduced to an optimization over pairs $(G, P)$.  Here and below, a state means a positive semidefinite operator of trace one.  For a state $G$ and an orthogonal projection $P$, write
\[
 K_{G,P}=G^{1/2}PG^{1/2},\qquad
 L_{G,P}=G^{1/2}(I-P)G^{1/2}.
\]
The following ``reflection pair'' identity allows us to directly write the objective $J$ in terms of $G$ and $P$, eliminating $K$ and $L$.  In the following, all operators live on an ambient finite-dimensional space $\mathcal{H}$, and all inverse square roots below are taken on the corresponding supports.

\begin{proposition}[reflection pair identity]
\label{prop:reflection-pair}
Let $G$ be a state, let $R=R^*=R^{-1}$ be a reflection, and put
\begin{equation}
 P_\pm=\frac{I\pm R}{2},\qquad
 G_e=\frac{G+RGR}{2},\qquad
 G_o=\frac{G-RGR}{2}.
 \label{eq:reflection-even-odd}
\end{equation}
For $K=K_{G,P_+}$ and $L=L_{G,P_+}$, one has
\begin{align}
 2\tr(K^{1/2}L^{1/2})
 &=\tr\!\left(G_e^{-1/2}G_oG_e^{-1/2}G_o\right),
 \label{eq:reflection-Gamma}\\
 \tr\!\left(G^{-1/2}(K-L)G^{-1/2}(K-L)\right)
 &=\tr\!\left(RG^{1/2}RG^{1/2}\right).
 \label{eq:reflection-bias}
\end{align}
Consequently,
\begin{equation}
 J(K,L)=J\!\left(\frac G2,\frac{RGR}{2}\right),
 \label{eq:reflection-pair-identity}
\end{equation}
with the two terms of $J$ exchanged.
\end{proposition}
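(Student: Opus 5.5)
The plan is a direct computation. I will write $K$, $L$, $G_e$ and $G_o$ in block form relative to the decomposition $\mathcal H=\operatorname{ran}P_+\oplus\operatorname{ran}P_-$ given by the reflection $R$. Then both identities reduce to cyclicity of the trace, with some care about supports.

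Put $A=P_+GP_+$, $C=P_-GP_-$ and $B=P_+GP_-$. Since $R=P_+-P_-$, conjugation by $R$ flips the sign of the off-diagonal blocks. Hence
\[
 G_e=A+C,\qquad G_o=B+B^*,
\]
so $G_e$ is block diagonal and $G_o$ is block off-diagonal. For the bias identity \eqref{eq:reflection-bias}, note that $K+L=G$ and $K-L=G^{1/2}(P_+-P_-)G^{1/2}=G^{1/2}RG^{1/2}$. With the inverse taken on $\operatorname{supp}G$, we have $G^{-1/2}(K-L)G^{-1/2}=\Pi R\Pi$, where $\Pi$ is the support projection of $G$. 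Since $G^{1/2}\Pi=\Pi G^{1/2}=G^{1/2}$, the trace of this against $K-L$ is $\tr(RG^{1/2}RG^{1/2})$.

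For \eqref{eq:reflection-Gamma}, I write $K=ZZ^*$ with $Z=G^{1/2}P_+$, so that $Z^*Z=A$. The polar decomposition gives $K^{1/2}=ZA^{-1/2}Z^*$, with $A^{-1/2}$ taken on $\operatorname{supp}A$. This is the same device as in the proof of Proposition~\ref{prop:duality}: the identity holds because $(ZA^{-1/2}Z^*)^2=ZA^{-1/2}AA^{-1/2}Z^*=ZZ^*$, and $ZA^{-1/2}Z^*$ is positive. In the same way, $L^{1/2}=Z_-C^{-1/2}Z_-^*$ with $Z_-=G^{1/2}P_-$. Cyclicity then gives
\[
 \tr(K^{1/2}L^{1/2})=\tr\bigl(A^{-1/2}BC^{-1/2}B^*\bigr),
\]
because $Z^*Z_-=P_+GP_-=B$. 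On the other side, $G_e^{-1/2}=A^{-1/2}+C^{-1/2}$ blockwise, since $\operatorname{supp}G_e=\operatorname{supp}A\oplus\operatorname{supp}C$. Expanding $\tr(G_e^{-1/2}G_oG_e^{-1/2}G_o)$ leaves only the two cross terms
\[
 \tr(A^{-1/2}BC^{-1/2}B^*)+\tr(C^{-1/2}B^*A^{-1/2}B).
\]
These are equal by cyclicity, which gives the factor $2$.

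The one place needing care is the support bookkeeping. I must check that $B$ maps into $\operatorname{supp}A$ and out of $\operatorname{supp}C$, so that using generalized inverses loses nothing. This follows from positivity of $G$: the block matrix $\begin{pmatrix}A&B\\B^*&C\end{pmatrix}\succeq0$ forces $\operatorname{ran}B\subseteq\operatorname{ran}A$ and $\operatorname{ran}B^*\subseteq\operatorname{ran}C$. I expect this to be the main, though mild, obstacle.

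Finally, for \eqref{eq:reflection-pair-identity}, set $K'=G/2$ and $L'=RGR/2$. These are positive with $\tr(K'+L')=1$, and $K'+L'=G_e$, $K'-L'=G_o$. First, $\tr(K'^{1/2}L'^{1/2})=\tfrac12\tr(G^{1/2}RG^{1/2}R)$, which is half the bias term of $(K,L)$. Second, $\tfrac12\tr(G_e^{-1/2}G_oG_e^{-1/2}G_o)=\tr(K^{1/2}L^{1/2})$ by \eqref{eq:reflection-Gamma}. Substituting into \eqref{eq:J-body} shows that $J(K',L')$ and $J(K,L)$ consist of the same two terms with their roles exchanged.
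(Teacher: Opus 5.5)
Your proposal is correct and follows essentially the same route as the paper: the block decomposition relative to $P_\pm$, the polar-decomposition formula $K^{1/2}=G^{1/2}P_+A^{-1/2}P_+G^{1/2}$ with cyclicity for \eqref{eq:reflection-Gamma}, the support-projection argument for \eqref{eq:reflection-bias}, and $(RGR)^{1/2}=RG^{1/2}R$ for the exchange of terms. The range inclusions you flag as the main obstacle are true but not needed, since both sides reduce exactly to $\tr(A^{-1/2}BC^{-1/2}B^*)$ with the same generalized inverses; the only support fact used is $G^{1/2}P_+\Pi_A=G^{1/2}P_+$, where $\Pi_A$ is the support projection of $A$, and this follows from $\ker(Z^*Z)=\ker Z$.
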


\begin{proof}
Relative to the decomposition
$\mathcal H=P_+\mathcal H\oplus P_-\mathcal H$, write
\[
 A=P_+GP_+,\qquad B=P_-GP_-,\qquad C=P_+GP_-.
\]
Then
\[
 G_e=A\oplus B,\qquad
 G_o=\begin{pmatrix}0&C\\ C^*&0\end{pmatrix},
\]
and hence
\begin{equation}
 \tr\!\left(G_e^{-1/2}G_oG_e^{-1/2}G_o\right)
 =2\tr\!\left(A^{-1/2}CB^{-1/2}C^*\right).
 \label{eq:reflection-block-bias}
\end{equation}
On the other hand, the polar decomposition of $G^{1/2}P_+$ gives
\[
 K^{1/2}=G^{1/2}P_+A^{-1/2}P_+G^{1/2},
\]
and the analogous identity holds for $L$.  Multiplication and cyclicity of
trace show that the right side of
Equation~\eqref{eq:reflection-block-bias} is
$2\tr(K^{1/2}L^{1/2})$, proving
Equation~\eqref{eq:reflection-Gamma}.

For the second identity, set $S=G^{1/2}$ and let $\Pi$ be the support
projection of $G$.  Since $K-L=SRS$ and
$G^{-1/2}S=SG^{-1/2}=\Pi$, cyclicity and $S\Pi=S$ give
\[
 \tr\!\left(G^{-1/2}(K-L)G^{-1/2}(K-L)\right)
 =\tr(RSRS),
\]
which is Equation~\eqref{eq:reflection-bias}.  Finally,
$(RGR)^{1/2}=RSR$, while the sum and difference of $G/2$ and $RGR/2$
are $G_e$ and $G_o$, respectively.  Equations
\eqref{eq:reflection-Gamma} and \eqref{eq:reflection-bias} therefore give
Equation~\eqref{eq:reflection-pair-identity}.
\end{proof}

Combining Propositions~\ref{prop:projective-normal-form} and
\ref{prop:reflection-pair} gives the equivalent formulation
\begin{equation}
 J_{\rm fin}
 =\sup_{\substack{\mathcal H\ \text{ finite-dimensional}\\
                   G\succeq0,\ \tr G=1\\R=R^*=R^{-1}}}
 J\!\left(\frac G2,\frac{RGR}{2}\right).
 \label{eq:J-reflection-optimization}
\end{equation}
The projective form has the channel interpretation discussed in
Subsection~\ref{subsec:projective-PGM}, while the reflection form will be
used to evaluate the construction below.

We will construct a state $G_N\in M_{2^N}$ and a projection $P_N$, with
$K_N=K_{G_N,P_N}$ and $L_N=L_{G_N,P_N}$.  Theorem~\ref{thm:explicit-recovery}
will prove
\begin{equation}
 J_N:=J(K_N,L_N)
 =\frac1{2N}\cot\frac{\pi}{4N}
 =\frac2\pi-\frac{\pi}{24N^2}+O(N^{-4}).
 \label{eq:recovery-goal}
\end{equation}
Proposition~\ref{prop:duality} gives a Gram-symmetrized pair whose
$(\Phi,\Gamma)$ parameters are $(1/2,J_N)$.  For
$\delta>\delta_\pi$, Section~\ref{sec:closing} finally applies
Theorem~\ref{thm:realization} to obtain the quantitative
upper bound in Theorem~\ref{thm:upper}.

The constructions in this section run parallel to OpenAI's Chapter~2,
especially Sections~8.2--8.3 which address arbitrarily deep levels of the
moving subspace hierarchy for spherical codes \cite{OpenAI2026}.  At level $N$
their construction uses interlacing poles $x_1,\ldots,x_N$ and zeros
$y_1,\ldots,y_{N-1}$ of a rational Stieltjes transform.  A relative entropy
argument bounds their optimization problem above by a quantity $\lambda_*$.
To recover $\lambda_*$ in the limit, they specialize their construction to
the roots and critical points of Chebyshev polynomials.

\subsection{Projective channels and the pretty good measurement}
\label{subsec:projective-PGM}

For two equiprobable channel outputs $\sigma_0,\sigma_1$, put
\begin{equation}
 K=\frac{\sigma_0}{2},\qquad L=\frac{\sigma_1}{2},\qquad
 G=K+L=\frac{\sigma_0+\sigma_1}{2}.
 \label{eq:channel-positive-pair}
\end{equation}
The effects of their pretty good measurement are
\begin{equation}
 M_0=G^{-1/2}KG^{-1/2},\qquad
 M_1=G^{-1/2}LG^{-1/2},
 \label{eq:PGM-effects}
\end{equation}
with inverses taken on $\operatorname{supp}G$.  We call a binary-input
classical--quantum channel \emph{projective} if the two effects of its pretty
good measurement are complementary orthogonal projections.  In the coordinates
$K=K_{G,P}$ and $L=L_{G,P}$, the pretty good measurement is the
compression of $(P,I-P)$ to $\operatorname{supp}G$, and it is exactly
$(P,I-P)$ when $G$ is faithful.
Moreover,
\begin{align}
 \Phi(K,L)&=1-\chi(\sigma_0,\sigma_1),
 \label{eq:Phi-Holevo}\\
 \tr\!\left(G^{-1/2}(K-L)G^{-1/2}(K-L)\right)
 &=1-2p_{\rm pgm}(\sigma_0,\sigma_1).
 \label{eq:PGM-bias}
\end{align}
Here
\begin{equation}
 \chi(\sigma_0,\sigma_1)
 =\Ent(G)-\frac12\Ent(\sigma_0)-\frac12\Ent(\sigma_1),
 \qquad
 p_{\rm pgm}=\tr(KM_1)+\tr(LM_0).
 \label{eq:channel-quantities}
\end{equation}
The first identity follows from entropy scaling; the second is the difference
between the PGM success and error probabilities.  Proposition~\ref{prop:duality}
therefore gives
\begin{equation}
 \Phi(K^\perp,L^\perp)=\chi(\sigma_0,\sigma_1),\qquad
 \Gamma(K^\perp,L^\perp)=1-2p_{\rm pgm}(\sigma_0,\sigma_1).
 \label{eq:PGM-pair-dictionary}
\end{equation}
Applying Theorem~\ref{thm:realization} to the pair $(K^\perp, L^\perp)$ shows
that $p_{\rm pgm}(\sigma_0,\sigma_1)<\delta$ implies
$R_D(\delta)\leq\chi(\sigma_0,\sigma_1)$.
This proves the asymptotic form of
the pretty good criterion~\cite[Theorem~1]{AlrabiahGuruswami2026},
and moreover shows that the pretty good criterion bounds the Delsarte bound $R_D$.

We now express the binary pure-state channel of Alrabiah and Guruswami in
these projective coordinates.  Let
$\mathsf X,\mathsf Y,\mathsf Z$ be the Pauli matrices and, for
$0\leq r<1/2$, put
\begin{equation}
 c=1-2r,\qquad s=2\sqrt{r(1-r)},\qquad
 G_r=\frac{I+s\mathsf X}{2},\qquad
 R=\mathsf Z,\qquad P_\pm=\frac{I\pm R}{2}.
 \label{eq:PSC-projective-data}
\end{equation}
Since $c^2+s^2=1$, direct calculation gives
\begin{align}
 K_r:=K_{G_r,P_+}&=\frac14(I+s\mathsf X+c\mathsf Z)
                  =\frac12|\psi_0\rangle\langle\psi_0|,\notag\\
 L_r:=L_{G_r,P_+}&=\frac14(I+s\mathsf X-c\mathsf Z)
                  =\frac12|\psi_1\rangle\langle\psi_1|,
 \label{eq:PSC-positive-pair}
\end{align}
where
\begin{equation}
 |\psi_0\rangle=\sqrt{1-r}\,|0\rangle+\sqrt r\,|1\rangle,
 \qquad |\psi_1\rangle=\mathsf X|\psi_0\rangle.
 \label{eq:PSC-states}
\end{equation}
Thus $2K_r,2L_r$ are the outputs of the pure-state channel, and their PGM is
the computational-basis measurement.  In particular,
\begin{equation}
 p_{\rm pgm}=r,\qquad
 \chi=h_2\!\left(\frac{1+s}{2}\right),\qquad
 \Gamma(K_r,L_r)=s^2,\qquad
 J(K_r,L_r)=\frac{s^2+c}{2}.
 \label{eq:PSC-statistics}
\end{equation}

\subsection{The \texorpdfstring{$N$}{N}-qubit projection}
\label{subsec:N-qubit}

The $N$-qubit construction that we consider is a generalization of the
pure-state channel and recovers it in the special case $N=1$.
Let $\eta\in(-1,1)^N$ and let $\gamma\in\RR^N$ be a unit vector.  We use
these data to choose the state and projection in
Proposition~\ref{prop:projective-normal-form}.  The next subsection will select $(\eta,\gamma)$.

Recall that for $V$ a $2N$-dimensional vector space with nondegenerate
symmetric bilinear form $\langle\cdot,\cdot\rangle$, the Clifford algebra is
\[
 \operatorname{Cl}(V)
 =T(V)\big/\bigl\langle
 xy+yx-2\langle x,y\rangle1:x,y\in V
 \bigr\rangle.
\]
For elements $A$ and $B$ of an algebra, write $\{A,B\}=AB+BA$ for their
anticommutator.  An orthonormal basis
$a_1,\ldots,a_N,b_1,\ldots,b_N$ of $V$ satisfies
\begin{equation}
 \{a_j,a_k\}=2\delta_{jk},
 \qquad
 \{b_j,b_k\}=2\delta_{jk},
 \qquad
 \{a_j,b_k\}=0.
 \label{eq:Majorana-relations}
\end{equation}

This basis identifies $V=A\oplus B$ with two orthogonal copies of
$\RR^N$.  Write
\[
 a(w)=\sum_{j=1}^Nw_ja_j,\qquad
 b(w)=\sum_{j=1}^Nw_jb_j
\]
for the corresponding linear isometric embeddings into
$V\subseteq\operatorname{Cl}(V)$.  Their images
generate two Clifford subalgebras of $\operatorname{Cl}(V)$.

On $\mathcal H_N=(\CC^2)^{\otimes N}$, we have a Jordan--Wigner realization
\begin{equation}
 a_j=\mathsf X_1\cdots\mathsf X_{j-1}\mathsf Z_j,
 \qquad
 b_j=\mathsf X_1\cdots\mathsf X_{j-1}\mathsf Y_j,
 \qquad
 \iu a_jb_j=\mathsf X_j.
 \label{eq:Clifford-generators}
\end{equation}
For $\eta=(\eta_1,\ldots,\eta_N)\in(-1,1)^N$, consider
\begin{equation}
 G_\eta=2^{-N}\prod_{j=1}^N(I-\eta_j\mathsf X_j).
 \label{eq:product-state}
\end{equation}
Put $r_j=\operatorname{arctanh}\eta_j$.  Functional calculus for $\mathsf X_j$ gives
\[
 \frac{I-\eta_j\mathsf X_j}{2}
 =\frac{e^{-r_j\mathsf X_j}}{2\cosh r_j},
\]
and hence
\begin{equation}
 G_\eta
 =\frac{e^{-H_\eta}}{\tr(e^{-H_\eta})},
 \qquad
 H_\eta=\sum_{j=1}^Nr_j\mathsf X_j.
 \label{eq:quadratic-Gibbs}
\end{equation}
The factors of $G_\eta$ are commuting positive definite matrices and
$\tr G_\eta=1$.  At the same time, in the joint eigenbasis of $\mathsf X_j$, the state $G_{\eta}$ is diagonal and a classical probability on spins $x \in \{ \pm 1 \}^N$, and moreover $\mathbb{P}(x_j = \pm 1) = (1\mp\eta_j) / 2$.  The first two moments of this probability measure are straightforward to compute:
\begin{equation}
  \mathbb{E}[\mathsf X_i] = -\eta_i, \qquad \mathrm{Cov}(\mathsf X) = (I-D_{\eta}^2) \quad\text{where}\quad D_\eta=\operatorname{diag}(\eta_1,\ldots,\eta_N).
  \label{eq:first-second-moments}
\end{equation}

We define the reflection and projection
\begin{equation}
 R_\gamma=a(\gamma),
 \quad
 P_\gamma^\pm=\frac{I\pm R_\gamma}{2},
 \label{eq:product-density}
\end{equation}
where the Clifford relations give $R_\gamma^2=I$.  Hence
$(G_\eta,R_\gamma)$ is feasible for
Equation~\eqref{eq:J-reflection-optimization}.

\begin{lemma}
\label{lem:reflection-action}
The embedding $a$ intertwines the Householder reflection
$I-2\gamma\gamma^{\mathsf T}$ with the Pin action
$X\mapsto-R_\gamma X R_\gamma$, while the Pin action fixes the image of $b$.
That is, for every $w\in\RR^N$,
\begin{equation}
 -R_\gamma a(w)R_\gamma
 =a\!\left((I-2\gamma\gamma^{\mathsf T})w\right),\qquad
 -R_\gamma b(w)R_\gamma=b(w).
 \label{eq:reflection-action}
\end{equation}
\end{lemma}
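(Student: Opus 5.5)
The plan is to verify both identities directly from the Clifford relations \eqref{eq:Majorana-relations}, using only linearity of $a$ and $b$ together with $R_\gamma^2=I$. The key step is to compute the conjugate of a single generator, $R_\gamma a_k R_\gamma$. Since $R_\gamma=\sum_j\gamma_ja_j$ lies in the image of $a$, the anticommutation relation gives
\[
 \{R_\gamma,a_k\}=\sum_j\gamma_j\{a_j,a_k\}=2\gamma_k I,
\]
so that $a_kR_\gamma=2\gamma_k-R_\gamma a_k$.

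Multiplying on the left by $R_\gamma$ and using $R_\gamma^2=\sum_{j,k}\gamma_j\gamma_k a_ja_k=\|\gamma\|^2I=I$ (symmetrize the double sum with the relations), I obtain
\[
 R_\gamma a_kR_\gamma=2\gamma_kR_\gamma-a_k .
\]
Hence $-R_\gamma a_kR_\gamma=a_k-2\gamma_k a(\gamma)$. By linearity in $w$,
\[
 -R_\gamma a(w)R_\gamma=a(w)-2\langle\gamma,w\rangle a(\gamma)=a\bigl(w-2\gamma\gamma^{\mathsf T}w\bigr),
\]
which is the first identity in \eqref{eq:reflection-action}.

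For the image of $b$, the relation $\{a_j,b_k\}=0$ gives $\{R_\gamma,b_k\}=0$, so $R_\gamma b_k=-b_kR_\gamma$. Therefore
\[
 -R_\gamma b_kR_\gamma=b_kR_\gamma^2=b_k,
\]
and linearity again yields $-R_\gamma b(w)R_\gamma=b(w)$.

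None of these steps poses a real obstacle. The only point that needs care is $R_\gamma^2=I$, which the paper already asserts and which follows from $\|\gamma\|=1$. I would also note that the argument holds abstractly in $\operatorname{Cl}(V)$, so it applies to the Jordan--Wigner realization \eqref{eq:Clifford-generators} without any matrix computation.
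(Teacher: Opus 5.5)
Your proposal is correct and follows the paper's own argument: compute $\{R_\gamma,a(w)\}=2\langle\gamma,w\rangle$ and $\{R_\gamma,b(w)\}=0$ from the Clifford relations, then multiply by $R_\gamma$ and use $R_\gamma^2=I$. You do this generator by generator and extend by linearity, which is the same proof with the steps written out.
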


\begin{proof}
The Clifford relations give
$\{R_\gamma,a(w)\}=2\langle\gamma,w\rangle$ and
$\{R_\gamma,b(w)\}=0$.  Multiplying these identities by $R_\gamma$ and
using $R_\gamma^2=I$ proves Equation~\eqref{eq:reflection-action}.
\end{proof}

Writing
$G_\eta=2^{-N}\prod_j(I-\iu\eta_ja_jb_j)$, the lemma shows that
ordinary conjugation contributes one minus sign on each of the $a$- and
$b$-generators.  These signs cancel in their quadratic products, so it
replaces the coupling matrix $D_\eta$ by
$(I-2\gamma\gamma^{\mathsf T})D_\eta$.
Put
\begin{equation}
 \xi_{\eta,\gamma}=D_\eta\gamma,
 \qquad
 T_{\eta,\gamma}=(I-\gamma\gamma^{\mathsf T})D_\eta.
 \label{eq:small-data}
\end{equation}
Since $D_\eta$ is self-adjoint,
\begin{equation}
 D_\eta=\gamma\xi_{\eta,\gamma}^{\mathsf T}+T_{\eta,\gamma}.
 \label{eq:one-particle-splitting}
\end{equation}
Applying reflection across $\gamma^{\perp}$,
\begin{equation}
 (I-2\gamma\gamma^{\mathsf T})D_\eta
 =T_{\eta,\gamma}-\gamma\xi_{\eta,\gamma}^{\mathsf T}.
 \label{eq:reflected-coupling}
\end{equation}
Define the defect matrix
\begin{equation}
 \Lambda_{\eta,\gamma}
 =I-T_{\eta,\gamma}^{\mathsf T}T_{\eta,\gamma}
 =I-D_\eta^2
  +\xi_{\eta,\gamma}\xi_{\eta,\gamma}^{\mathsf T}.
 \label{eq:defect-matrix}
\end{equation}
The following computation writes $J$ in terms of two expressions $v$ and $u$.  The term $v$ is a quadratic form of the ordinary covariance of~\eqref{eq:first-second-moments} with exponent $+1/2$, while the term $u$ is a quadratic form of the spiked, i.e., rank-one perturbed, covariance given by the defect matrix~\eqref{eq:defect-matrix} with exponent $-1/2$.

\begin{theorem}
\label{thm:compression}
For the state $G_\eta$ in Equation~\eqref{eq:product-state} and the
projection $P_\gamma^+$ in Equation~\eqref{eq:product-density}, write
\[
 K=K_{G_\eta,P_\gamma^+},
 \qquad
 L=L_{G_\eta,P_\gamma^+},
\]
and let
\begin{align}
 u(\eta,\gamma)&=2\tr(K^{1/2}L^{1/2}),\notag\\
 v(\eta,\gamma)&=\tr\!\left(
 R_\gamma G_\eta^{1/2}R_\gamma G_\eta^{1/2}\right).
\end{align}
Then
\begin{align}
 u(\eta,\gamma)
 &=\xi_{\eta,\gamma}^{\mathsf T}
   \Lambda_{\eta,\gamma}^{-1/2}\xi_{\eta,\gamma},
 \label{eq:u-compression}\\
 v(\eta,\gamma)
 &=\gamma^{\mathsf T}(I-D_\eta^2)^{1/2}\gamma.
 \label{eq:v-compression}
\end{align}
Thus $J(K,L)=(u+v)/2$.
\end{theorem}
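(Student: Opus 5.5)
The plan is to evaluate both quantities in the reflection form given by Proposition~\ref{prop:reflection-pair}, applied with $G=G_\eta$ and $R=R_\gamma$. That proposition gives
\[
u=\tr\!\left(G_e^{-1/2}G_oG_e^{-1/2}G_o\right),\qquad v=\tr\!\left(R_\gamma G_\eta^{1/2}R_\gamma G_\eta^{1/2}\right),
\]
where $G_{e}=(G_\eta\pm R_\gamma G_\eta R_\gamma)/2$ is the even part and $G_o$ the odd part. Once both formulas are proved, the identity $J=(u+v)/2$ follows directly from \eqref{eq:reflection-pair-identity}. Both computations will use the Clifford dictionary: $\mathsf X_j=\iu a_jb_j$, together with Lemma~\ref{lem:reflection-action}. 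That lemma says conjugation by $R_\gamma$ replaces $a(w)$ by $-a((I-2\gamma\gamma^{\mathsf T})w)$ and replaces $b(w)$ by $-b(w)$.

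\emph{The term $v$.} This is the easy step, and I will do it first. Write
\[
G_\eta^{1/2}=\prod_j(\alpha_j I+\beta_j\mathsf X_j),\qquad \alpha_j\pm\beta_j=\sqrt{(1\mp\eta_j)/2}.
\]
Expand both $G_\eta^{1/2}$ and $R_\gamma G_\eta^{1/2}R_\gamma$ into monomials $\prod_{j\in S}a_jb_j$. In the conjugated copy, each $a_j$ is replaced by $a(He_j)$, where $H=I-2\gamma\gamma^{\mathsf T}$; the two sign changes cancel in each quadratic monomial. The normalized trace of a product of such monomials vanishes unless the two $b$-sets coincide. When they coincide at a set $S$, the $a$-part contributes the Pfaffian pairing, which here is $\det H_{SS}=1-2\sum_{j\in S}\gamma_j^2$. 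The resulting sum over $S$ is affine in the variables $\gamma_j^2$. The $\gamma$-independent part is $\tr G_\eta=1$, and the coefficient of $\gamma_j^2$ works out to
\[
4\alpha_j\beta_j\cdot(\ldots)=\sqrt{1-\eta_j^2}-1.
\]
Since $\sum_j\gamma_j^2=1$, this yields $v=\sum_j\gamma_j^2\sqrt{1-\eta_j^2}$, which is \eqref{eq:v-compression}.

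\emph{The term $u$.} First I rotate the $a$-modes by an orthogonal matrix sending $\gamma$ to $e_1$; this is a Bogoliubov unitary preserving the Clifford relations. After the rotation, $R_\gamma=a_1$, and by \eqref{eq:one-particle-splitting} the state becomes a quasi-free Gibbs state whose coupling matrix is $\gamma\xi^{\mathsf T}+T$. Here the mode $a_1$ couples only to $b(\xi)$, and the remaining $a$-modes couple through $T$. In this frame the even/odd split relative to $a_1$ is explicit.
\begin{enumerate}[label=\textup{(\roman*)}]
\item $G_e$ is the state with the $a_1$-dependence traced out, up to normalization. It is again Gaussian, on the Majoranas $a_2,\ldots,a_N,b_1,\ldots,b_N$, with coupling $T$.
\item $G_o$ is $a_1$ times an odd element that is \emph{linear} in the remaining Majoranas. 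Concretely, I expect $G_o=\iu a_1\,b(\xi)\,G_e$ up to a commuting correction. This uses the Wick rule that $\tr(G_\eta\,\iu a_1 b_k)$ is the $(1,k)$ entry of the coupling, and the $(1,\cdot)$ row is $\xi^{\mathsf T}$.
\end{enumerate}
Then $u$ reduces to $\tr\bigl(\rho^{-1/2}\ell\rho^{-1/2}\ell^*\bigr)$ for a Gaussian state $\rho$ and a linear Majorana form $\ell=b(\xi)\rho$ or its symmetrized version.

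The main obstacle is the fermionic linear-response identity. For a Gaussian state with covariance data $M$, the map $\ell\mapsto\rho^{-1/2}\ell\rho^{-1/2}$ acts on the one-particle space as a function of $M$. For the $b$-block, the reduced covariance is $\Lambda=I-T^{\mathsf T}T$, which is exactly why the defect matrix \eqref{eq:defect-matrix} appears, and the target is
\[
\tr\!\left(\rho^{-1/2}\ell\rho^{-1/2}\ell^*\right)=\xi^{\mathsf T}\Lambda^{-1/2}\xi.
\]
To prove it, I would first diagonalize $T$ by its singular value decomposition, pairing $a$-modes with $b$-modes. This makes $\rho$ a product of commuting two-Majorana factors. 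Then I would check the identity mode by mode, where it reduces to a $2\times2$ computation with the same $\alpha_j,\beta_j$ algebra used for $v$. The residual unpaired $b$-direction, present because $T$ has rank at most $N-1$, is maximally mixed and contributes eigenvalue $1$ of $\Lambda$. Finally, I would recheck the normalization against the case $N=1$, where \eqref{eq:PSC-statistics} gives $u=s^2$.
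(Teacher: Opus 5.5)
Your plan for $u$ follows the paper's route: the reflection-pair reduction, $G_e$ as a Gaussian with coupling $T$, and a mode-by-mode identity from the singular value decomposition of $T$ that produces $\Lambda=I-T^{\mathsf T}T$. However, step (ii) is a genuine gap, and it is exactly where the answer is decided. The exact odd part is
\[
 G_o=-\iu R\cdot\tfrac12\{b(\xi),G_e\},
\]
not $\iu R\,b(\xi)G_e$ up to a harmless correction. The discrepancy $\tfrac12[b(\xi),G_e]$ vanishes only when $T\xi=0$. Otherwise $\iu R\,b(\xi)G_e$ is not even self-adjoint, whereas $G_o=(G-RGR)/2$ is. The symmetrization also changes the value. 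Take a single invariant plane with orthonormal $e,f$, $\kappa=\iu ef$, $X=I-s\kappa$, $f$ the $b$-direction, and $\tau$ the normalized trace. Then $\tfrac12\{f,X\}=f$ and $\tau(X^{-1/2}fX^{-1/2}f)=(1-s^2)^{-1/2}$, which is the $\Lambda^{-1/2}$ you want. Your unsymmetrized choice $\ell=fX$ instead gives $\tau(X^{-1/2}fX^{3/2}f)=(1+s^2)(1-s^2)^{-1/2}$. The Wick rule you cite only fixes the expectations $\tr(G_\eta\,\iu a_1b_k)$, not the operator $G_o$. Your $N=1$ check cannot catch the error either: there $T=0$, so $G_e$ is scalar and both candidates coincide.

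The missing step is the exterior-algebra factorization used in the paper. With $\omega_M=\sum_{i,j}M_{ij}a_i\wedge b_j$ and the symbol map $\sigma$, one has $G_\eta=2^{-N}\sigma(\exp_\wedge(-\iu\omega_D))$, and $D_\eta=\gamma\xi^{\mathsf T}+T$ gives $\omega_D=\omega_T+R\wedge b(\xi)$. Since two-forms commute and $(R\wedge b(\xi))^{\wedge2}=0$,
\[
 \exp_\wedge(-\iu\omega_D)=\exp_\wedge(-\iu\omega_T)-\iu R\wedge b(\xi)\wedge\exp_\wedge(-\iu\omega_T).
\]
Every vector occurring in $\omega_T$ is orthogonal to $R$, so these two summands are the even and odd parts under conjugation by $R$. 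Applying $\sigma$ with $\sigma(v\wedge z)=\tfrac12\{v,\sigma(z)\}$ for even $z$ gives $G_e=2^{-N}\sigma(\exp_\wedge(-\iu\omega_T))$ and the formula for $G_o$ above. Now put $X=2^NG_e$ and $Y=\tfrac12\{b(\xi),X\}$, so that $u=\tau(X^{-1/2}YX^{-1/2}Y)$. Your mode-by-mode identity, in the form $X^{-1/2}\tfrac12\{w,X\}X^{-1/2}=S^{-1}w$ with $S^2$ acting on $b$-vectors by $b(y)\mapsto b(\Lambda y)$, gives $X^{-1/2}YX^{-1/2}=b(\Lambda^{-1/2}\xi)$. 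The pairing $\tau(\tfrac12\{v,X\}w)=\langle v,w\rangle$ then finishes. Your computation of $v$ by monomial expansion with $\det H_{SS}=1-2\sum_{j\in S}\gamma_j^2$ is a correct, more hands-on alternative to the paper's one-line use of $X^{1/2}wX^{1/2}=\tfrac12\{Sw,X\}$ with $w=R$. It has one slip: $4\alpha_j\beta_j=-\eta_j$, and the coefficient of $\gamma_j^2$ is actually $-4\beta_j^2=\sqrt{1-\eta_j^2}-1$.
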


\begin{proof}
To simplify notation, we will write $G=G_\eta$, $R=R_\gamma$,
$P_\pm=P_\gamma^\pm$, $D=D_\eta$, $T=T_{\eta,\gamma}$,
$\xi=\xi_{\eta,\gamma}$, and
$\Lambda=\Lambda_{\eta,\gamma}$.
By convention, inverse square roots below are taken on the corresponding
supports.

We first recall some basic properties of Clifford algebras $\operatorname{Cl}(V)$.  The symbol map $\sigma \colon \bigwedge V \rightarrow \operatorname{Cl}(V)$ sends $v_1 \wedge \dots \wedge v_n$ to the product $v_1 \cdots v_n$ when $v_i$ are mutually orthogonal.  The normalized trace $\tau=2^{-N}\tr$ has the property that $\tau(\sigma(v))$ is the degree-zero part of $v$.  In $\bigwedge V$, we let $\exp_{\wedge}(\theta) = \sum_{n=0}^{\infty} \frac{\theta^{\wedge n}}{n!}$.  Finally, contraction $\iota_v$ is the graded derivation on $\bigwedge V$ defined by $\iota_v(w)=\langle v,w\rangle$ for vectors $v,w$.

Equation~\eqref{eq:reflection-Gamma} gives
\begin{equation}
 u=\tr\!\left(G_e^{-1/2}G_oG_e^{-1/2}G_o\right),
 \label{eq:u-even-odd}
\end{equation}
where $G_e$ and $G_o$ are the even and odd parts of $G$ under conjugation
by $R$, as in Equation~\eqref{eq:reflection-even-odd}.  Thus, we aim to find a simple expression for $G_e$ and $G_o$.  For an $N \times N$ matrix $M$, define
\[
  \omega_M = \sum_{i,j} M_{ij} a_i \wedge b_j \in \bigwedge V.
\]
Then, by linearity
\begin{equation}
  \begin{aligned}
  \omega_D &= \omega_T + \omega_{\gamma \xi} = \omega_T + R \wedge b(\xi), \\
  \exp_{\wedge}(-\iu \omega_D) &= \exp_{\wedge}(-\iu \omega_T) - \iu R \wedge b(\xi) \wedge \exp_{\wedge}(-\iu \omega_T).
  \end{aligned}
  \label{eq:omega-exponential}
\end{equation}

Now, recall that
\begin{equation}
  G = 2^{-N} \prod_j(I-\iu\eta_ja_jb_j) = 2^{-N} \sigma\left(\exp_{\wedge}(-\iu \omega_D) \right),
  \label{eq:wick-for-g}
\end{equation}
which is $2^{-N}$ times the symbol of the left hand side of \eqref{eq:omega-exponential}.  We claim the two summands identify with $G_e$ and $G_o$, respectively.  Indeed, the image of $T$ lies in $\gamma^\perp$, so every vector occurring in $\omega_T$ is orthogonal to $R$.  Lemma~\ref{lem:reflection-action} shows that conjugation by $R$ negates each such vector.  It therefore fixes the symbol of $\exp_{\wedge}(-\iu\omega_T)$, whose terms have even degree.  In the second summand, $R$ itself is fixed, while $b(\xi)$ and every vector in $\omega_T$ are negated; each term has an odd number of these latter vectors.  Thus conjugation by $R$ negates its symbol.

For every vector $v$ and even
exterior form $z$, the Clifford relations give
$\sigma(v\wedge z)=\tfrac12\{v,\sigma(z)\}$.  Also, $R$ is orthogonal to every
vector occurring in $b(\xi)\wedge \exp_{\wedge}(-\iu\omega_T)$ due to the projection $T$, so its wedge product becomes
ordinary left multiplication after applying $\sigma$.  Consequently,
\[
 \sigma\bigl(R\wedge b(\xi)\wedge \exp_{\wedge}(-\iu\omega_T)\bigr)
 =R\,\sigma\bigl(b(\xi)\wedge \exp_{\wedge}(-\iu\omega_T)\bigr)
 =\frac{R}{2}\{b(\xi),\sigma(\exp_{\wedge}(-\iu\omega_T))\}.
\]
The odd summand of \eqref{eq:omega-exponential}, multiplied by
$2^{-N}$, is therefore
$-\iu R\,\tfrac12\{b(\xi),G_e\}$, since
$G_e=2^{-N}\sigma(\exp_{\wedge}(-\iu\omega_T))$.  Thus, to summarize, we have
\begin{equation}
 G_e=2^{-N}\sigma\left(\exp_{\wedge}(-\iu \omega_T)\right),
 \qquad
 G_o=-\iu\,R\cdot\tfrac12\{b(\xi),G_e\}.
 \label{eq:arrow-even-odd}
\end{equation}

For a real two-form $\omega\in\bigwedge^2V$, let
\[
 K_\omega w=\iota_w\omega,\qquad
 X_\omega=\sigma\!\left(\exp_{\wedge}(-\iu\omega)\right).
\]
Here $\iota_w$ denotes contraction by $w$.

\begin{lemma}
\label{lem:gaussian-sandwich}
If $\|K_\omega\|<1$, then $X=X_\omega$ is positive definite,
$\tau(X)=1$, and, with $S=(I+K_\omega^2)^{1/2}$,
\begin{equation}
 X^{1/2}wX^{1/2}=\tfrac12\{Sw,X\},\qquad
 X^{-1/2}\tfrac12\{w,X\}X^{-1/2}=S^{-1}w
 \quad(w\in V).
 \label{eq:gaussian-sandwich}
\end{equation}
Moreover, for $v,w\in V$,
\begin{equation}
 \tau\!\left(\tfrac12\{v,X\}w\right)=\langle v,w\rangle.
 \label{eq:gaussian-trace-pairing}
\end{equation}
\end{lemma}

\begin{proof}
Choose an orthogonal decomposition of $V$ into invariant planes for
the skew operator $K_\omega$.  On one such plane, with orthonormal
basis $e,f$, write $\omega=s\,e\wedge f$ and
$\kappa=\iu ef$.  Then $X=I-s\kappa$ on this plane,
$S=\sqrt{1-s^2}\,I$, and $\kappa$ anticommutes with every
vector $w$ in the plane.  Thus
\[
 (I-s\kappa)^{1/2}w(I-s\kappa)^{1/2}
 =\sqrt{1-s^2}\,w
 =\tfrac12\{\sqrt{1-s^2}\,w,I-s\kappa\}.
\]
For orthogonal planes the even factors $I-s\kappa$ commute.
A vector in one plane commutes with all the other factors, so the
one-plane identity gives $X^{1/2}wX^{1/2}=\tfrac12\{Sw,X\}$ on every
plane, and hence on $V$ by linearity.  Each factor is positive because
$|s|<1$, and the degree-zero part of the exterior exponential is $1$,
so $\tau(X)=1$.  Replacing $w$ in the first identity by $S^{-1}w$
and multiplying by $X^{-1/2}$ gives the second identity.
Finally, cyclicity of trace and $\{v,w\}=2\langle v,w\rangle I$
give
\[
 \tau\!\left(\tfrac12\{v,X\}w\right)
 =\tfrac12\tau\!\left(X(wv+vw)\right)
 =\langle v,w\rangle\tau(X)
 =\langle v,w\rangle.
\]
\end{proof}

For the form $\omega_M=\sum_{i,j}M_{ij}a_i\wedge b_j$, contraction gives
\[
 K_{\omega_M}a(x)=b(M^{\mathsf T}x),\qquad
 K_{\omega_M}b(y)=-a(My).
\]
Consequently $\|K_{\omega_M}\|=\|M\|$, and
\begin{equation}
 (I+K_{\omega_M}^2)a(x)=a((I-MM^{\mathsf T})x),
 \qquad
 (I+K_{\omega_M}^2)b(y)=b((I-M^{\mathsf T}M)y).
 \label{eq:gaussian-defect-action}
\end{equation}

To compute $u$, set $X=X_{\omega_T}=2^NG_e$ and
$Y=\tfrac12\{b(\xi),X\}$.  Equation~\eqref{eq:arrow-even-odd} says
$G_o=-\iu\,2^{-N}RY$.  Because $\omega_T$ has no $R$ component,
$R$ commutes with $X$ and anticommutes with $Y$.  Thus
Equation~\eqref{eq:u-even-odd} becomes
\[
 u=\tau\!\left(X^{-1/2}YX^{-1/2}Y\right).
\]
Now $\|T\|\leq\|D\|<1$, and
Equation~\eqref{eq:gaussian-defect-action} says that
$I+K_{\omega_T}^2$ acts on the image of $b$ as
$\Lambda=I-T^{\mathsf T}T$.  The second identity in
Equation~\eqref{eq:gaussian-sandwich} therefore gives
\[
 X^{-1/2}YX^{-1/2}=b(\Lambda^{-1/2}\xi).
\]
By cyclicity of trace, Equation~\eqref{eq:gaussian-trace-pairing}
and the isometry of $b$ finish the computation:
\[
 \begin{aligned}
 u&=\tau\!\left(\tfrac12\{b(\xi),X\}
                 b(\Lambda^{-1/2}\xi)\right)\\
  &=\langle b(\xi),b(\Lambda^{-1/2}\xi)\rangle\\
  &=\xi^{\mathsf T}\Lambda^{-1/2}\xi,
 \end{aligned}
\]
which is Equation~\eqref{eq:u-compression}.

For $v$, use $X=X_{\omega_D}=2^NG$, $w=R=a(\gamma)$, and
$S=(I+K_{\omega_D}^2)^{1/2}$.
Equation~\eqref{eq:gaussian-defect-action} identifies
$Sw=a((I-D^2)^{1/2}\gamma)$.  The first identity in
Equation~\eqref{eq:gaussian-sandwich}, followed by
Equation~\eqref{eq:gaussian-trace-pairing}, yields
\[
 v=\tau(X^{1/2}RX^{1/2}R)
  =\gamma^{\mathsf T}(I-D^2)^{1/2}\gamma,
\]
which is Equation~\eqref{eq:v-compression}.
\end{proof}

\begin{remark}
  The computation of $u$ can also be arrived at from considering the overlap of
  displaced fermionic Gaussian states.  The generalized Balian--Br\'ezin formula can compute this overlap in~\cite[Equation~(55)]{SeifiJafarizadehRajabpour2024} after an application of the Colpa trick.
\end{remark}

\subsection{The Chebyshev specialization}

Fix $N\geq2$ and put
\begin{equation}
 \theta_{N,j}=\frac{(j+1/2)\pi}{N},\qquad
 \eta_N=(\cos\theta_{N,j})_{j=0}^{N-1},\qquad
 \gamma_N=\frac1{\sqrt N}(1,\ldots,1)^{\mathsf T}.
 \label{eq:Chebyshev-parameters}
\end{equation}
Thus the entries of $\eta_N$ are the cosines of the midpoints of $N$ equal
subintervals of $[0,\pi]$, with equal weights $\gamma_{N,j}^2=1/N$.  Abbreviate
\[
 G_N=G_{\eta_N},\qquad P_N=P_{\gamma_N}^+,\qquad
 K_N=K_{G_N,P_N},\qquad L_N=L_{G_N,P_N},
\]
and
\[
 D_N=D_{\eta_N},\qquad
 \Lambda_N=\Lambda_{\eta_N,\gamma_N},\qquad
 \xi_N=\xi_{\eta_N,\gamma_N}.
\]

Theorem~\ref{thm:compression} reduces the remaining problem to two scalar
measures.
For the data $(\eta,\gamma)$, write the spectral decomposition
\[
 \Lambda_{\eta,\gamma}=\sum_r\lambda_r\Pi_r
\]
and let
\begin{equation}
 \mu_{\eta,\gamma}=\sum_{j=1}^N\gamma_j^2\delta_{\eta_j},
 \qquad
 \nu_{\eta,\gamma}
 =\sum_r\|\Pi_r\xi_{\eta,\gamma}\|^2\delta_{\lambda_r}.
 \label{eq:quadrature-measure}
\end{equation}
For the data in Equation~\eqref{eq:Chebyshev-parameters}, the first of these
is the discrete probability measure
\begin{equation}
 \mu_N:=\mu_{\eta_N,\gamma_N}
 =\frac1N\sum_{j=0}^{N-1}\delta_{\cos((j+1/2)\pi/N)}.
 \label{eq:Chebyshev-root-measure}
\end{equation}
For every real-valued function $f$ on
$\operatorname{spec}(\Lambda_{\eta,\gamma})$, spectral functional calculus
gives
\begin{equation}
 \int f(\lambda)\,d\nu_{\eta,\gamma}(\lambda)
 =\xi_{\eta,\gamma}^{\mathsf T}
   f(\Lambda_{\eta,\gamma})\xi_{\eta,\gamma}.
 \label{eq:nu-functional-calculus}
\end{equation}
We also have
\[
 v(\eta,\gamma)
 =\sum_{j=1}^N\gamma_j^2\sqrt{1-\eta_j^2}
 =\int_{-1}^1\sqrt{1-t^2}\,d\mu_{\eta,\gamma}(t).
\]
Moreover, $\operatorname{spec}(\Lambda_{\eta,\gamma})\subset(0,1]$, so
spectral functional calculus with $f(\lambda)=\lambda^{-1/2}$ gives
\[
 u(\eta,\gamma)
 =\xi_{\eta,\gamma}^{\mathsf T}
   \Lambda_{\eta,\gamma}^{-1/2}\xi_{\eta,\gamma}
 =\int_{(0,1]}\lambda^{-1/2}\,d\nu_{\eta,\gamma}(\lambda).
\]
Combining these identities yields
\begin{equation}
 \frac{u(\eta,\gamma)+v(\eta,\gamma)}2
 =\frac12\left(
  \int_{-1}^1\sqrt{1-t^2}\,d\mu_{\eta,\gamma}(t)
  +\int_{(0,1]}\lambda^{-1/2}\,d\nu_{\eta,\gamma}(\lambda)
 \right).
 \label{eq:J-two-measures}
\end{equation}
The data in Equation~\eqref{eq:Chebyshev-parameters} are chosen so that both
measures are explicit and both integrals approach $2/\pi$.

The target for the first integral is the arcsine average.  Give
$0<\theta<\pi$ the uniform probability measure
$d\theta/\pi$ and set
$\eta=\cos\theta$.  Its pushforward under
$\theta\mapsto\eta=\cos\theta$ is the arcsine probability measure
\begin{equation}
 d\mu_{\rm arc}(\eta)=\frac{d\eta}{\pi\sqrt{1-\eta^2}}.
 \label{eq:arcsine-measure}
\end{equation}
In particular,
\begin{equation}
 \int_{-1}^1\sqrt{1-\eta^2}\,d\mu_{\rm arc}(\eta)
 =\frac1\pi\int_0^\pi\sin\theta\,d\theta
 =\frac2\pi.
 \label{eq:arcsine-sine-mean}
\end{equation}

The Chebyshev polynomials $T_N$ and $U_{N-1}$ are defined by
\begin{equation}
 T_N(\cos\theta)=\cos(N\theta),\qquad
 U_{N-1}(\cos\theta)=\frac{\sin(N\theta)}{\sin\theta};
 \label{eq:Chebyshev-definitions}
\end{equation}
in particular, $T_N'=NU_{N-1}$.

\begin{lemma}
\label{lem:Chebyshev}
For the data in Equation~\eqref{eq:Chebyshev-parameters},
\begin{equation}
 \nu_N:=\nu_{\eta_N,\gamma_N}
 =\frac1N\sum_{k=1}^{N-1}\sin^2\!\frac{k\pi}{N}\,
 \delta_{\sin^2(k\pi/N)}.
 \label{eq:Chebyshev-measure}
\end{equation}
\end{lemma}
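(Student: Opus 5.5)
The plan is to treat $\Lambda_N$ as a rank-one perturbation of a diagonal matrix and read off $\nu_N$ from its Stieltjes transform. By \eqref{eq:defect-matrix}, $\Lambda_N=A+\xi_N\xi_N^{\mathsf T}$ with $A=I-D_N^2=\operatorname{diag}(\sin^2\theta_{N,j})$ and $\xi_N=D_N\gamma_N=N^{-1/2}(\cos\theta_{N,j})_j$. By \eqref{eq:nu-functional-calculus}, the measure $\nu_N$ is determined by $m(z)=\xi_N^{\mathsf T}(\Lambda_N-z)^{-1}\xi_N$ for $z\notin\RR$. Sherman--Morrison gives $m=F/(1+F)$, where
\[
 F(z)=\xi_N^{\mathsf T}(A-z)^{-1}\xi_N=\frac1N\sum_j\frac{\cos^2\theta_{N,j}}{\sin^2\theta_{N,j}-z}.
\]
Since $m=1-1/(1+F)$, the atoms of $\nu_N$ sit at the zeros of $1+F$. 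The mass at a simple zero $z_0$ is $1/F'(z_0)$.

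\textbf{Closed form for $1+F$.} Substitute $z=1-x^2$. Then $\sin^2\theta-z=x^2-\cos^2\theta$, and
\[
 \frac{\cos^2\theta}{x^2-\cos^2\theta}=-1+\frac x2\Bigl(\frac1{x-\cos\theta}+\frac1{x+\cos\theta}\Bigr).
\]
The nodes $\cos\theta_{N,j}$ are the roots of $T_N$, a set invariant under $\eta\mapsto-\eta$. Hence $\frac1N\sum_j\frac1{x\mp\eta_j}=\pm\frac{T_N'(x)}{NT_N(x)}=\pm\frac{U_{N-1}(x)}{T_N(x)}$, using $T_N'=NU_{N-1}$ from \eqref{eq:Chebyshev-definitions}. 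Summing the two halves gives
\[
 1+F=\frac{xU_{N-1}(x)}{T_N(x)}.
\]
Its zeros in $x$ are $x_k=\cos(k\pi/N)$ for $k=1,\dots,N-1$, together with possibly $x=0$; for odd $N$ the root $x=0$ cancels against $T_N(0)=0$. This places the candidate atoms at $z_k=\sin^2(k\pi/N)$, with $k$ and $N-k$ giving the same point.

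\textbf{Masses.} At $x_k=\cos\varphi_k$, $\varphi_k=k\pi/N$, we have $T_N(x_k)=(-1)^k$. Differentiating $\sin N\varphi/\sin\varphi$ gives $U_{N-1}'(x_k)=-N(-1)^k/\sin^2\varphi_k$. So $\frac{d}{dx}(1+F)=-Nx_k/\sin^2\varphi_k$ there, and using $dz=-2x\,dx$ we get $F'(z_k)=N/(2\sin^2\varphi_k)$. Each distinct point $z_k$ therefore carries mass $2\sin^2(k\pi/N)/N$. This equals the sum of the two terms for $k$ and $N-k$ in \eqref{eq:Chebyshev-measure}, and it handles the coincidence at $k=N/2$ as a single term.

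\textbf{Main obstacle.} The delicate point is excluding extra atoms. These could come from eigenvalues of $A$ that $\xi_N$ still sees (the degenerate pairs $\theta$, $\pi-\theta$), from the point $z=1$ (that is, $x=0$), and from the cases $x_k=0$ or $\cos\theta_{N,j}=0$. I would settle all of these at once by comparing total masses. The total mass is $\nu_N(\RR)=\|\xi_N\|^2=\frac1N\sum_j\cos^2\theta_{N,j}=\frac12$. The identified atoms already give $\frac1N\sum_{k=1}^{N-1}\sin^2(k\pi/N)=\frac12$. Since the masses are nonnegative and already exhaust the total, $\nu_N$ has no other support, which proves \eqref{eq:Chebyshev-measure}.
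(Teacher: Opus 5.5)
You follow the paper exactly up to the closed form $1+F=xU_{N-1}(x)/T_N(x)$. The divergence comes afterwards, and that is where a step fails. For even $N$, the root $x_{N/2}=0$ of $U_{N-1}$ is a critical point of $x\mapsto z=1-x^2$: it is a double zero of $xU_{N-1}(x)$ in $x$ but a simple zero of $1+F$ in $z$. So $F'(z_{N/2})$ cannot be obtained by dividing $\frac{d}{dx}(1+F)=-Nx_k/\sin^2\varphi_k$ by $dz/dx=-2x_k$, since the quotient is $0/0$. Your blanket conclusion that each distinct $z_k$ carries mass $2\sin^2(k\pi/N)/N$ therefore assigns mass $2/N$ to $z=1$. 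The correct mass is $1/N$, the single $k=N/2$ term. With the masses as you state them, the total for even $N$ is $\tfrac12+\tfrac1N$, not $\tfrac12$, so your closing total-mass comparison does not close. Already for $N=2$ your formula puts mass $1$ at $z=1$, while in fact $\nu_2=\tfrac12\delta_1$. Your remark that the coincidence at $k=N/2$ is handled ``as a single term'' states the right answer without deriving it.

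The repair is a second-order expansion at $x=0$. There $U_{N-1}(x)=-N(-1)^{N/2}x+O(x^3)$ and $T_N(0)=(-1)^{N/2}$, so $1+F=-Nx^2+O(x^4)=N(z-1)+O((z-1)^2)$. This gives $F'(1)=N$ and mass $1/N$.

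Alternatively, take the mass at $z=1$ as the leftover $\tfrac12-\bigl(\tfrac12-\tfrac1N\bigr)$, where $\tfrac12-\tfrac1N$ is the sum of your (correct) masses at $z_k$, $k\neq N/2$. This is legitimate because excluding extra atoms needs no mass counting. Since $m=1-1/(1+F)$ stays bounded near the poles of $F$, the atoms of $\nu_N$ are exactly the zeros of $1+F$ in the variable $z$, which you have already located. The total-mass comparison is then only a consistency check, and it is exactly the check that exposes the $k=N/2$ error.

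The paper avoids both issues differently. It proves the identity of rational functions $1-T_N(x)/(xU_{N-1}(x))=\frac1N\sum_{k=1}^{N-1}\frac{\sin^2(k\pi/N)}{\sin^2(k\pi/N)-z}$, using the logarithmic derivative of $U_{N-1}$ and the Chebyshev equation $(1-x^2)U_{N-1}'=xU_{N-1}-NT_N$, and then compares Stieltjes transforms. This handles the pairing $k\leftrightarrow N-k$, the point $z=1$, and the absence of other atoms uniformly, with no residues or mass counts.

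Finally, there is a sign slip. By the symmetry of the roots of $T_N$, $\frac1N\sum_j\frac1{x+\eta_j}=+\frac{T_N'(x)}{NT_N(x)}$, not $-$. With your $\pm$ the two halves would cancel rather than add.
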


\begin{proof}
We identify the atoms and weights of $\nu_N$ by computing
$\xi_N^{\mathsf T}(\Lambda_N-zI)^{-1}\xi_N$.  Writing
$\Lambda_N=\sum_r\lambda_r\Pi_r$, the spectral decomposition gives
\begin{equation}
 \xi_N^{\mathsf T}(\Lambda_N-zI)^{-1}\xi_N
 =\sum_r\frac{\|\Pi_r\xi_N\|^2}{\lambda_r-z}.
 \label{eq:nu-spectral-resolvent}
\end{equation}
We use Sherman--Morrison and logarithmic derivatives of the Chebyshev
polynomials to show that the same quantity equals
\[
 \frac1N\sum_{k=1}^{N-1}
 \frac{\sin^2(k\pi/N)}{\sin^2(k\pi/N)-z}.
\]
Comparing the two expressions proves the claimed formula for $\nu_N$.

Let $S_N^2=I-D_N^2$.  Since
$\Lambda_N=S_N^2+\xi_N\xi_N^{\mathsf T}$, Sherman--Morrison gives
\begin{equation}
 \xi_N^{\mathsf T}(\Lambda_N-zI)^{-1}\xi_N
 =\frac{h_N(z)}{1+h_N(z)},
 \qquad h_N(z)=\xi_N^{\mathsf T}(S_N^2-zI)^{-1}\xi_N.
 \label{eq:SM}
\end{equation}
Because $S_N^2$ is diagonal and
$\xi_N=N^{-1/2}(\eta_{N,0},\ldots,\eta_{N,N-1})^{\mathsf T}$,
\begin{equation}
 h_N(z)=\frac1N\sum_{j=0}^{N-1}
 \frac{\eta_{N,j}^2}{1-\eta_{N,j}^2-z}.
 \label{eq:hN-diagonal}
\end{equation}
Put $x^2=1-z$.  The roots of $T_N$ are the $\eta_{N,j}$, and
they are invariant as a set under $\eta\mapsto-\eta$.  Hence the
logarithmic derivative of $T_N$ gives
\begin{align}
 \sum_{j=0}^{N-1}\frac1{x^2-\eta_{N,j}^2}
 &=\frac1{2x}\sum_{j=0}^{N-1}
   \left(\frac1{x-\eta_{N,j}}+\frac1{x+\eta_{N,j}}\right)\notag\\
 &=\frac{T_N'(x)}{xT_N(x)}.
 \label{eq:Chebyshev-squared-log-derivative}
\end{align}
Since
$\eta^2/(x^2-\eta^2)=x^2/(x^2-\eta^2)-1$,
Equations~\eqref{eq:hN-diagonal}
and~\eqref{eq:Chebyshev-squared-log-derivative} imply
\[
 h_N(z)
 =\frac{xT_N'(x)}{NT_N(x)}-1
 =\frac{xU_{N-1}(x)}{T_N(x)}-1,
\]
where the last equality uses $T_N'=NU_{N-1}$.  Therefore
\begin{equation}
 \frac{h_N(z)}{1+h_N(z)}
 =1-\frac{T_N(x)}{xU_{N-1}(x)}.
\end{equation}
The roots of $U_{N-1}$, $x_k=\cos\frac{k\pi}{N}$, are invariant as a set
under $x_k\mapsto-x_k$.  The logarithmic derivative of $U_{N-1}$ gives
\[
 \sum_{k=1}^{N-1}\frac1{x^2-x_k^2}
 =\frac{U_{N-1}'(x)}{xU_{N-1}(x)}.
\]
Since $1-x_k^2=(1-x^2)+(x^2-x_k^2)$, it follows that
\begin{equation}
 \frac1N\sum_{k=1}^{N-1}\frac{1-x_k^2}{x^2-x_k^2}
 =\frac{N-1}{N}
   +\frac{1-x^2}{N}\frac{U_{N-1}'(x)}{xU_{N-1}(x)} =1-\frac{T_N(x)}{xU_{N-1}(x)}.
 \label{eq:Chebyshev-critical-log-derivative}
\end{equation}
Here the last equality is the Chebyshev differential equation in the form
\[
 (1-x^2)U_{N-1}'(x)=xU_{N-1}(x)-NT_N(x).
\]
Finally, $1-x_k^2=\sin^2(k\pi/N)$ and
$x^2-x_k^2=\sin^2(k\pi/N)-z$, so
\begin{equation}
 1-\frac{T_N(x)}{xU_{N-1}(x)}
 =\frac1N\sum_{k=1}^{N-1}
 \frac{\sin^2(k\pi/N)}{\sin^2(k\pi/N)-z}.
\end{equation}
Together with Equation~\eqref{eq:nu-spectral-resolvent}, this proves
Equation~\eqref{eq:Chebyshev-measure}.
\end{proof}

Equations~\eqref{eq:Chebyshev-root-measure}
and~\eqref{eq:Chebyshev-measure} now allow us to evaluate $u_N$ and $v_N$
explicitly.

\begin{theorem}
\label{thm:explicit-recovery}
For this specialization, write
\[
 u_N=u(\eta_N,\gamma_N),\qquad v_N=v(\eta_N,\gamma_N).
\]
Then
\begin{align}
 u_N&=\frac1N\cot\frac{\pi}{2N},
 \label{eq:uN}\\
 v_N&=\frac1N\csc\frac{\pi}{2N},
 \label{eq:vN}\\
 J(K_N,L_N)&=\frac1{2N}\cot\frac{\pi}{4N}.
 \label{eq:JN}
\end{align}
Consequently
\begin{equation}
 J(K_N,L_N)\uparrow\frac2\pi,
 \qquad
 \frac2\pi-J(K_N,L_N)
 =\frac{\pi}{24N^2}+O(N^{-4}).
 \label{eq:J-rate}
\end{equation}
\end{theorem}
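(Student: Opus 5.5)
The plan is to read off both quantities from the two measures in Equation~\eqref{eq:J-two-measures}.

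\emph{The term $v_N$.} By Equation~\eqref{eq:Chebyshev-root-measure},
\[
 v_N=\frac1N\sum_{j=0}^{N-1}\sin\frac{(j+1/2)\pi}{N}.
\]
This is the imaginary part of a geometric sum $\frac1N\sum_j e^{\iu(j+1/2)\pi/N}$. That sum equals
\[
 \frac1N\cdot\frac{e^{\iu\pi/(2N)}(e^{\iu\pi}-1)}{e^{\iu\pi/N}-1}
 =\frac{-2}{N\bigl(e^{\iu\pi/(2N)}-e^{-\iu\pi/(2N)}\bigr)}
 =\frac{\iu}{N\sin(\pi/(2N))}.
\]
Hence $v_N=\frac1N\csc\frac{\pi}{2N}$, which is~\eqref{eq:vN}.

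\emph{The term $u_N$.} By Lemma~\ref{lem:Chebyshev}, integrating $\lambda^{-1/2}$ against $\nu_N$ gives
\[
 u_N=\frac1N\sum_{k=1}^{N-1}\sin^2\!\frac{k\pi}{N}\cdot\Bigl|\sin\frac{k\pi}{N}\Bigr|^{-1}
 =\frac1N\sum_{k=1}^{N-1}\sin\frac{k\pi}{N}.
\]
The sines are positive for $1\le k\le N-1$, so the absolute value drops out. The same geometric-series computation, now with the sum $\sum_{k=0}^{N-1}e^{\iu k\pi/N}=2/(1-e^{\iu\pi/N})$, gives imaginary part $\cot\frac{\pi}{2N}$. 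Hence $u_N=\frac1N\cot\frac{\pi}{2N}$, which is~\eqref{eq:uN}.

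\emph{The value of $J$.} Theorem~\ref{thm:compression} gives $J(K_N,L_N)=(u_N+v_N)/2$, so
\[
 J(K_N,L_N)=\frac1{2N}\Bigl(\cot\frac{\pi}{2N}+\csc\frac{\pi}{2N}\Bigr)=\frac1{2N}\cot\frac{\pi}{4N},
\]
using the half-angle identity $\cot x+\csc x=\frac{1+\cos x}{\sin x}=\cot(x/2)$ with $x=\pi/(2N)$. This is~\eqref{eq:JN}.

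\emph{Asymptotics and monotonicity.} Put $y=\pi/(4N)$, so that $J_N=\frac{2}{\pi}\,y\cot y$. Using $y\cot y=1-y^2/3-y^4/45-\cdots$, we get
\[
 \frac2\pi-J_N=\frac2\pi\cdot\frac{y^2}{3}+O(y^4)=\frac{\pi}{24N^2}+O(N^{-4}).
\]
For monotonicity, $y\cot y$ is strictly decreasing on $(0,\pi/2)$, since $\frac{d}{dy}(y\cot y)=\cot y-y\csc^2 y=(\sin 2y-2y)/(2\sin^2y)<0$. As $y=\pi/(4N)$ decreases in $N$, $J_N$ increases strictly, and $y\cot y\to1$ gives the limit $2/\pi$.

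\emph{Main obstacle.} Essentially everything is routine once Lemma~\ref{lem:Chebyshev} and Theorem~\ref{thm:compression} are granted. The only care needed is to check that $\nu_N$ has no atom at $\lambda=0$ (true, since $\sin^2(k\pi/N)>0$ for $1\le k\le N-1$) and that the square root is taken with the correct sign.
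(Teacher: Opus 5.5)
Your proposal is correct and follows the paper's proof. Like the paper, you evaluate $v_N$ from the Chebyshev root measure and $u_N$ from Lemma~\ref{lem:Chebyshev} as imaginary parts of geometric series, combine them with $\cot x+\csc x=\cot(x/2)$, and obtain the asymptotics and monotonicity from the expansion and monotonicity of $y\cot y$; you additionally write out the geometric-sum and derivative computations that the paper leaves implicit.
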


\begin{proof}
Lemma~\ref{lem:Chebyshev} evaluates the second integral in
Equation~\eqref{eq:J-two-measures}:
\[
 u_N=\frac1N\sum_{k=1}^{N-1}\sin\frac{k\pi}{N}
 =\frac1N\cot\frac{\pi}{2N}.
\]
Equation~\eqref{eq:Chebyshev-root-measure} gives the first integral:
\[
 v_N=\frac1N\sum_{j=0}^{N-1}\sin\theta_{N,j}
 =\frac1N\csc\frac{\pi}{2N}.
\]
The sine sums are imaginary parts of geometric series.  Since
$J(K_N,L_N)=(u_N+v_N)/2$ and
$\cot x+\csc x=\cot(x/2)$,
Equation~\eqref{eq:JN} follows.  The result now follows from the Laurent
expansion of the cotangent and the monotonicity of $x\cot x$ as
$x\to0^+$.
\end{proof}

\section{The upper bound}
\label{sec:closing}

Section~\ref{sec:recovery} gives normalized Horn pairs $(K_N,L_N)$ with
\[
 J(K_N,L_N)\uparrow\frac2\pi.
\]
Gram symmetrization turns each of them into a pair with $\Phi=1/2$ and
$\Gamma=J(K_N,L_N)$.

\begin{lemma}
\label{lem:right-of-endpoint-upper}
For $\delta_\pi<\delta<1/2$,
\begin{equation}
 R_D(\delta)\leq\frac\pi4(1-2\delta).
 \label{eq:right-of-endpoint-upper}
\end{equation}
\end{lemma}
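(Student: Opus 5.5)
The plan is to combine the Gram-symmetrized Chebyshev pairs with the dilution lemma and then pass to the limit $N\to\infty$. Fix $\delta$ with $\delta_\pi<\delta<1/2$, so that $1-2\delta<2/\pi$.

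For each $N\ge2$, Theorem~\ref{thm:explicit-recovery} gives a normalized Horn pair $(K_N,L_N)$ with $J(K_N,L_N)=\frac1{2N}\cot\frac{\pi}{4N}\uparrow 2/\pi$. Apply Proposition~\ref{prop:duality} to form the Gram-symmetrized pair $(\widehat K_N,\widehat L_N)$. This pair is again normalized, acts on a finite-dimensional space of dimension $2\cdot2^N+2\cdot 2^{N+1}$ (or whatever the direct sum gives), and satisfies
\[
 \Phi(\widehat K_N,\widehat L_N)=\tfrac12,\qquad \Gamma(\widehat K_N,\widehat L_N)=J(K_N,L_N).
\]

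Since $J(K_N,L_N)\to2/\pi>1-2\delta$, there is an $N_0$ with $\Gamma(\widehat K_N,\widehat L_N)>1-2\delta$ for all $N\ge N_0$. For such $N$, Lemma~\ref{lem:Horn-dilution} applies and gives
\[
 R_D(\delta)\le\frac{1-2\delta}{J(K_N,L_N)}\cdot\frac12 .
\]
The left side does not depend on $N$. Letting $N\to\infty$ and using $J(K_N,L_N)\to2/\pi$, the right side tends to $\frac{1-2\delta}{2}\cdot\frac{\pi}{2}=\frac\pi4(1-2\delta)$, which is \eqref{eq:right-of-endpoint-upper}.

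There is no real obstacle here; the substance lies in Theorem~\ref{thm:realization} and the construction. The one point to check is that Lemma~\ref{lem:Horn-dilution} needs the strict inequality $1-2\delta<\Gamma$, and this is exactly why the statement is restricted to $\delta>\delta_\pi$ and why we discard the finitely many $N<N_0$. The main theorem's claim $R_D(\delta)<1/2$ for $\delta>\delta_\pi$ then follows immediately, since $\frac\pi4(1-2\delta)<\frac\pi4\cdot\frac2\pi=\frac12$.
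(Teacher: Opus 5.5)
Your proposal is correct and is essentially the paper's proof: Gram-symmetrize the Chebyshev pairs via Proposition~\ref{prop:duality} to obtain $(\Phi,\Gamma)=(1/2,J_N)$, apply Lemma~\ref{lem:Horn-dilution} once $J_N>1-2\delta$, and let $N\to\infty$. One harmless slip: $\widehat K_N,\widehat L_N$ act on a space of dimension $2^N+2\cdot 2^N=3\cdot 2^N$, and this dimension plays no role in the argument.
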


\begin{proof}
Write $J_N=J(K_N,L_N)$.  By Proposition~\ref{prop:duality}, the
Gram-symmetrized pair associated with $(K_N,L_N)$ satisfies
\[
 \Gamma(\widehat K_N,\widehat L_N)=J_N,
 \qquad
 \Phi(\widehat K_N,\widehat L_N)=\frac12.
\]
For all sufficiently large $N$, one has $J_N>1-2\delta$, so
Lemma~\ref{lem:Horn-dilution} gives
\[
 R_D(\delta)\leq\frac{1-2\delta}{2J_N}.
\]
Letting $N\to\infty$ proves Equation~\eqref{eq:right-of-endpoint-upper}.
\end{proof}

\begin{theorem}[Upper bound]
\label{thm:upper}
We have $R_D(\delta_\pi)\leq1/2$, and
$R_D(\delta)<\frac12$ for every $\delta>\delta_\pi$.
\end{theorem}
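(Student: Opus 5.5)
Both claims follow from Lemma~\ref{lem:right-of-endpoint-upper}; what remains is to pass to the endpoint $\delta_\pi$, where the lemma does not apply directly because the inequality $\delta>\delta_\pi$ is strict.

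\emph{Strict inequality right of the endpoint.} For $\delta_\pi<\delta<1/2$, Lemma~\ref{lem:right-of-endpoint-upper} gives $R_D(\delta)\le\frac\pi4(1-2\delta)$. Since $1-2\delta_\pi=2/\pi$ and $\delta>\delta_\pi$, we have $1-2\delta<2/\pi$. Hence $\frac\pi4(1-2\delta)<\frac12$, which proves the second assertion.

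\emph{The endpoint.} Here we must control $R_D$ at $\delta_\pi$ using values to its right. I would use monotonicity of $\operatorname{LP}_n(d)$ in $d$: the feasible region for distance $d'$ is contained in that for $d\le d'$, so $\operatorname{LP}_n(d)$ is nonincreasing in $d$. This gives $R_D$ nonincreasing, but in the wrong direction: $R_D(\delta_\pi)\ge R_D(\delta)$ for $\delta>\delta_\pi$. So I need a continuity-type estimate from the left of $\delta$ instead.

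The shortening Lemma~\ref{lem:shortening} provides exactly this. Take $\delta'=\delta_\pi$ and any $\delta\in(\delta_\pi,1/2)$. The lemma gives
\[
 R_D(\delta)\ge1-\frac{\delta}{\delta_\pi}\bigl(1-R_D(\delta_\pi)\bigr),
\]
which rearranges to
\[
 R_D(\delta_\pi)\le1-\frac{\delta_\pi}{\delta}\bigl(1-R_D(\delta)\bigr)\le1-\frac{\delta_\pi}{\delta}\Bigl(1-\frac\pi4(1-2\delta)\Bigr).
\]
Now let $\delta\downarrow\delta_\pi$. The factor $\frac{\delta_\pi}{\delta}$ tends to $1$, and $\frac\pi4(1-2\delta)$ tends to $\frac\pi4\cdot\frac2\pi=\frac12$. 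So the right-hand side tends to $1-(1-\frac12)=\frac12$, giving $R_D(\delta_\pi)\le\frac12$.

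The main point to check is that Lemma~\ref{lem:shortening} is stated for $0<\delta'<\delta<1/2$ with no further hypotheses. In particular it allows $\delta'=\delta_\pi$ and uses the limsup definition of $R_D$, both of which hold here. The other potential worry is finiteness: $R_D(\delta_\pi)\le1$ trivially, since $\operatorname{LP}_n\le2^n$ via the dual with $F=\sum_jK_j$. With that, the rearrangement above is legitimate.
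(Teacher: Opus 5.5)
Your proposal is correct and matches the paper's proof: the strict bound for $\delta_\pi<\delta<1/2$ comes from Lemma~\ref{lem:right-of-endpoint-upper}, and the endpoint comes from Lemma~\ref{lem:shortening} with $\delta'=\delta_\pi$, followed by $\delta\downarrow\delta_\pi$. The paper also adds a one-line appeal to monotonicity of $R_D$ to extend the strict inequality to $\delta\geq1/2$, should $R_D$ be defined there; your argument does not need this on the stated domain $0<\delta<1/2$.
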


\begin{proof}
Lemma~\ref{lem:right-of-endpoint-upper} gives
$R_D(\delta)<1/2$ for $\delta_\pi<\delta<1/2$, and monotonicity extends
this strict bound to every $\delta>\delta_\pi$.

For $\delta_\pi<\delta<1/2$, Lemma~\ref{lem:shortening} gives
\[
 R_D(\delta_\pi)
 \leq1-\frac{\delta_\pi}{\delta}
 \bigl(1-R_D(\delta)\bigr)
 \leq1-\frac{\delta_\pi}{\delta}
 \left(1-\frac\pi4(1-2\delta)\right).
\]
Letting $\delta\downarrow\delta_\pi$ proves the endpoint bound.
\end{proof}

Corollary~\ref{cor:lower} and Theorem~\ref{thm:upper} prove
Theorem~\ref{thm:main}.

\begin{proof}[Proof of Theorem~\ref{thm:Krawtchouk-sign-uncertainty}]
Proposition~\ref{prop:Krawtchouk-sign-lower} gives the lower limit for both
signs.  For the reverse inequality, fix
$\delta\in(\delta_\pi,1/2)$ and put $d_n=\lceil\delta n\rceil$.
Theorem~\ref{thm:upper} gives $R_D(\delta)<1/2$, so for all sufficiently
large $n$,
\begin{equation}
 \operatorname{LP}_n(d_n)<2^{n/2}.
 \label{eq:LP-below-balanced-scale}
\end{equation}
Alternative \textup{(ii)} in Lemma~\ref{lem:finite-sign-alternative} would
give a Delsarte-feasible point of objective $2^{n/2}$, contradicting
Equation~\eqref{eq:LP-below-balanced-scale}.  Alternative \textup{(i)}
therefore holds, and
\[
 A^{\mathrm K}_-(n)\leq d_n.
\]
It follows that
$\limsup_n A^{\mathrm K}_-(n)/n\leq\delta$.  Letting
$\delta\downarrow\delta_\pi$ proves the matching upper limit for the
negative sign.  Proposition~\ref{prop:Krawtchouk-sign-comparison} transfers it to
the positive sign.  Together with the common lower bound, this proves
Equation~\eqref{eq:Krawtchouk-sign-radius}.
\end{proof}

\section*{Acknowledgements}

The author thanks Henry Cohn for early discussions, several years ago, about
the appropriate formulation of a binary coding analogue of the sphere-packing
conjecture \cite[Conjecture~3.2]{AfkhamiJeddiEtAl2020}.

\bibliographystyle{alpha}
\bibliography{references}

\end{document}